\documentclass[12pt]{amsart}

\usepackage{amsxtra,amssymb,amsthm,amsmath,amscd,url,amsrefs}
\usepackage{yhmath}

\usepackage{enumitem}
\newlist{enumth}{enumerate}{1}
\setlist[enumth]{label=\emph{(\arabic*)}, ref=(\arabic*)}

\usepackage[utf8]{inputenc}
\usepackage{eucal}
\usepackage{fullpage}
\usepackage{mathrsfs}
\usepackage{csquotes}
\usepackage[colorlinks]{hyperref}
\usepackage[]{graphicx}
\usepackage{subcaption}
\usepackage{dsfont}
\usepackage{float}
\usepackage{tikz-cd}

\renewcommand{\mathcal}{\mathscr}

\def\setminus{\mathchoice
	{\mathbin{\vrule height .62ex width 1.61ex depth -.38ex}}
	{\mathbin{\vrule height .62ex width 1.61ex depth -.38ex}}
	{\mathbin{\vrule height .50ex width 0.85ex depth -.28ex}}
	{\mathbin{\vrule height .20ex width 0.570ex depth -.24ex}}
}

\DeclareMathSymbol{A}{\mathalpha}{operators}{`A}%
\DeclareMathSymbol{B}{\mathalpha}{operators}{`B}%
\DeclareMathSymbol{C}{\mathalpha}{operators}{`C}%
\DeclareMathSymbol{D}{\mathalpha}{operators}{`D}%
\DeclareMathSymbol{E}{\mathalpha}{operators}{`E}%
\DeclareMathSymbol{F}{\mathalpha}{operators}{`F}%
\DeclareMathSymbol{G}{\mathalpha}{operators}{`G}%
\DeclareMathSymbol{H}{\mathalpha}{operators}{`H}%
\DeclareMathSymbol{I}{\mathalpha}{operators}{`I}%
\DeclareMathSymbol{J}{\mathalpha}{operators}{`J}%
\DeclareMathSymbol{K}{\mathalpha}{operators}{`K}%
\DeclareMathSymbol{L}{\mathalpha}{operators}{`L}%
\DeclareMathSymbol{M}{\mathalpha}{operators}{`M}%
\DeclareMathSymbol{N}{\mathalpha}{operators}{`N}%
\DeclareMathSymbol{O}{\mathalpha}{operators}{`O}%
\DeclareMathSymbol{P}{\mathalpha}{operators}{`P}%
\DeclareMathSymbol{Q}{\mathalpha}{operators}{`Q}%
\DeclareMathSymbol{R}{\mathalpha}{operators}{`R}%
\DeclareMathSymbol{S}{\mathalpha}{operators}{`S}%
\DeclareMathSymbol{T}{\mathalpha}{operators}{`T}%
\DeclareMathSymbol{U}{\mathalpha}{operators}{`U}%
\DeclareMathSymbol{V}{\mathalpha}{operators}{`V}%
\DeclareMathSymbol{W}{\mathalpha}{operators}{`W}%
\DeclareMathSymbol{X}{\mathalpha}{operators}{`X}%
\DeclareMathSymbol{Y}{\mathalpha}{operators}{`Y}%
\DeclareMathSymbol{Z}{\mathalpha}{operators}{`Z}%

\renewcommand{\leq}{\leqslant}
\renewcommand{\geq}{\geqslant}

\newcommand{\Cc}{\mathbf{C}}

\newcommand{\Nn}{\mathbf{N}}

\newcommand{\Zz}{\mathbf{Z}}

\newcommand{\Rr}{\mathbf{R}}

\newcommand{\Ss}{\mathbf{S}}

\newcommand{\Qq}{\mathbf{Q}}

\newcommand{\tp}{\mathcal{N}(T,t_q)}

\newcommand{\J}{\text{J}_0}

\newcommand{\mods}[1]{\,(\mathrm{mod}\,{#1})}

\newcommand{\ind}{\mathds{1}}

\renewcommand{\hat}{\widehat}

\DeclareMathOperator{\wass}{\mathcal{W}}

\DeclareMathOperator{\li}{li}

\DeclareMathOperator{\supp}{supp}

\renewcommand{\rho}{\varrho}

\DeclareMathSymbol{\gena}{\mathord}{letters}{"3C}
\DeclareMathSymbol{\genb}{\mathord}{letters}{"3E}

\theoremstyle{plain}
\newtheorem{theorem}{Theorem}[section]
\newtheorem{lemma}[theorem]{Lemma}
\newtheorem{corollary}[theorem]{Corollary}
\newtheorem{conjecture}[theorem]{Conjecture}

\newtheorem{proposition}[theorem]{Proposition}

\theoremstyle{remark}

\theoremstyle{definition}

\newtheorem{remark}[theorem]{Remark}

\renewcommand{\geq}{\geqslant}
\renewcommand{\leq}{\leqslant}
\renewcommand{\le}{\leqslant}
\renewcommand{\ge}{\geqslant}
\renewcommand{\Re}{\mathfrak{Re}\,}
\renewcommand{\Im}{\mathfrak{Im}\,}

\newcommand{\E}{\mathds{E}}
\newcommand{\Var}{\mathds{V}}
\newcommand{\rad}{\mathrm{rad}}

\begin{document}

\title{A Wasserstein metric approach to generalized Skewes' numbers. I. Prime number races}

\author{A. Bailleul}
\address{ENS Paris-Saclay, Centre Borelli, UMR 9010, 91190 Gif-sur-Yvette, France}
\email{alexandre.bailleul@ens-paris-saclay.fr}

\author{M. Hayani}
\address{Max Planck Institute for Mathematics Vivatsgasse 7 53111, Bonn, Germany}
\email{hayani@mpim-bonn.mpg.de}

\author{T. Untrau}
\address{Univ Rennes, CNRS, IRMAR - UMR 6625, F-35000 Rennes, France} 
\email{theo.untrau@ens-rennes.fr}

\date{} 



\begin{abstract}
We study generalized Skewes' numbers, which are the locations of the first sign change between two comparable prime counting functions. In the context of the race between quadratic residues and quadratic nonresidues, we construct sequences of highly composite moduli $q$ such that those Skewes' numbers grow very rapidly in some sense. This disproves unconditionally a conjecture of Fiorilli. In the other direction, assuming the Generalized Riemann Hypothesis and an effective linear independence hypothesis, we establish conditional upper bounds for generalized Skewes' numbers. Our approach relies on a quantitative Kronecker-Weyl theorem formulated in terms of the $1$-Wasserstein metric to obtain explicit rates for the convergence to the limiting distributions in these races.
\end{abstract}
\keywords{Chebyshev's bias, prime number races, Dirichlet $L$-functions, Wasserstein metrics, quantitative Kronecker--Weyl theorem}
\subjclass[2020]{ 11N13, 11M26, 60B10, 60F10, 11K70}

\maketitle
\section{Introduction}

\subsection{Chebyshev's bias}

Chebyshev's bias is the phenomenon that, for most $x \geq 2$, among prime numbers smaller than $x$, there are more that are congruent to 3 modulo 4 rather than 1 modulo 4. This phenomenon has been extensively studied, especially after the seminal work of Rubinstein and Sarnak \cite{RS94}. In particular, they showed that this phenomenon extends to more general \enquote{prime number races}, in which one compares the number of primes $p \equiv a \mods{q}$ and $p \equiv b \mods{q}$ smaller than $x$. In this context, there is a bias towards the congruence $a$ when $b$ is a quadratic residue modulo $q$ and $a$ is not, while there is no bias if both $a$ and $b$ are simultaneously squares or nonsquares modulo $q$. Precisely, if we denote by $\pi(x;q,a)$ the number of primes $p$ up to $x$ that satisfy $p \equiv a \mods{q}$, they proved that the set
\[ \mathcal{P}_{q;a,b} := \{ x \geqslant 2 \mid \pi(x;q,a) > \pi(x;q,b) \} \]
admits a positive logarithmic density $\delta_{q;a,b}$, meaning that 
\begin{equation} \label{eq: conv_delta_q_a_b}
\lim_{Y \to \infty} \left| \frac{1}{\log(Y)} \int_{2}^{Y} \ind_{\mathcal{P}_{q;a,b}}(y) \frac{\mathrm{d}y}{y}  -  \delta_{q;a,b} \right|=\lim_{X \to \infty} \left| \frac{1}{X} \int_{\log 2}^{X} \ind_{\mathcal{P}_{q;a,b}}(e^y) \mathrm{d}y  -  \delta_{q;a,b} \right| =0,
\end{equation}
and determined the conditions under which this density is smaller than, equal to, or greater than $\frac12$. Their results are conditional on the Generalized Riemann Hypothesis (GRH) for Dirichlet $L$-functions modulo $q$, and also on a linear independence hypothesis which they called GSH, for Grand Simplicity Hypothesis (later authors called this hypothesis LI for Linear Independence, and we will instead use this second terminology). The article of Rubinstein and Sarnak also studies more general prime number races, for instance the race between all quadratic residues and all nonresidues. More precisely, denoting by $R_q := \{ a^2 : \ a \in (\Zz/ q \Zz)^{\times} \}$ and by $NR_q :=   (\Zz/ q \Zz)^{\times} \setminus R_q $, we can ask how often (in logarithmic density) does the following inequality hold:
\begin{equation} \label{eq: teamed up squares} \frac{1}{|R_q|} \sum_{a\in R_q} \pi(x;q,a) > \frac{1}{|NR_q|} \sum_{b\in NR_q} \pi(x;q,b)  \text{ ?} 
\end{equation}

\begin{remark}
In what follows we will mostly use the second formulation of the convergence to the logarithmic density in \eqref{eq: conv_delta_q_a_b} and analogous results (meaning that we include the exponential change of variable in the definition of the integral that should converge to the density).
\end{remark}

\subsection{Generalized Skewes' numbers}

Skewes' number is the smallest number $x_0 \geq 2$ for which the classical prime-counting function $\pi(x)$ exceeds the logarithmic integral $\li(x)=\int_0^x \tfrac{\mathrm{d}t}{\log t}$ (defined as the Cauchy principal value). Littlewood~\cite{Littlewood} proved in $1914$, via the classical explicit formula for $\pi(x)-\li(x)$, that this difference changes sign infinitely often, and therefore that such an $x_0$ exists, but without an effective bound for it. In 1933, assuming the Riemann Hypothesis (RH), Skewes~\cite{Skewes} used the same explicit formula together with theoretical bounds on the distribution of the zeta function zeros and Diophantine approximations to show that $x_0<\exp(\exp(\exp(79)))$ (a triple exponential bound). He later obtained an unconditional bound as well in \cite{skewesII}. Later work, beginning with Lehman~\cite{Lehman} in 1966, introduced a smoothed variant of $\pi(x)-\li(x)$, defined as a short average of this difference weighted by an exponentially decreasing smooth function. Lehman then derived an explicit formula for this smoothed quantity, in which the Gaussian weight strongly suppresses the contribution of high zeros. This makes the main term depend mainly on the low--lying zeros. By computing numerically these zeros to high accuracy, Lehman and subsequent authors (te Riele~\cite{Riele}, Bays and Hudson~\cite{BaysHudson}) reduced this bound to $x_0< 1.39 \times 10^{316}$. It is also known that $x_0 > 10^{19}$ by direct calculations due to Büthe \cite{buthe}, but this lower bound remains far from the expected order of magnitude of $x_0$ (which is widely believed to be close to Bays and Hudson's $10^{316}$ upper bound).\\
In the case of prime number races, as explained by Fiorilli~\cite{Fiorilli}, one might consider 
\[ x_{q;a,b} := \inf\left\{x\ge 2\, :\, \pi(x;q,a) > \pi(x;q,b) \right\} \]
for a race where $a$ is a quadratic residue modulo $q$ and $b$ is not. In that case, the work of Rubinstein and Sarnak shows that the inequality $\pi(x;q,a) > \pi(x;q,b)$ only takes place for a small proportion of $x$ (with respect to the logarithmic density), so that the number $x_{q;a,b}$ can be considered as a generalized Skewes' number, as it is defined as the first realization of a \enquote{rare event} in a prime number race. Similarly, for the race between all quadratic residues and all nonresidues, one can define the associated generalized Skewes' number as
\[ x_{q;R,NR} := \inf\left\{x\ge 2\, \text{ such that } \eqref{eq: teamed up squares} \text{ holds}  \right\}  \]
(and $x_{q;NR,R}$ for the analogous quantity when the inequality in \eqref{eq: teamed up squares} is reversed). In this setting, Fiorilli made the following conjecture concerning the growth of $x_{q;R,NR}$.
\begin{conjecture}[{\cite[Conj. 1.19]{Fiorilli}}] \label{conj: fiorilli R vs NR} For all integers $q$, denote by $\rho(q)$ the number of square roots of $1$ modulo $q$. If $(q_n)_{n \geqslant 1}$ is a sequence of integers such that
 $\frac{\rho(q_n)}{\log \rad(q_n)}\to \infty$, then we have \[ \log \log x_{q_n;R, NR} \asymp \frac{\rho(q_n)}{\log \rad(q_n)} .\]
\end{conjecture}


\subsection{Overview of the results}

In this paper, we first show that the Skewes' numbers for the race between quadratic residues and nonresidues can actually grow much faster than what was predicted in Conjecture \ref{conj: fiorilli R vs NR}. 

\begin{theorem} \label{th: fiorilli_disproof}
Let $h\, :\, \Nn\to \Rr_{>0}$ be an increasing function tending to infinity. Then, for all $L>0$, there exist sequences $(q_n)_n$ and $(q_n')_n$ of square-free integers such that both $\rho(q_n)/\log q_n$ and $\rho(q_n')/\log q_n'$ tend to $\infty$ as $n \to \infty$, and such that for all sufficiently large $n$, we have \[ h (x_{q'_n; R, NR})>L\,  \frac{\rho(q_n')}{\log q_n'}\quad \text{and}\quad h (x_{q_n;NR,R})>L\,  \frac{\rho(q_n)}{\log q_n} \, .\]
\end{theorem}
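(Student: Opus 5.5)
The idea is to force the Skewes' number to be large by a purely elementary mechanism: choose $q$ so that \emph{every} prime up to a large parameter $y$ lies in one prescribed class (all nonresidues, or all residues) modulo $q$. Before $y$ one of the two teams has then counted no primes at all, so the relevant inequality cannot hold, and hence the Skewes' number is at least $y$. The remaining work is to arrange $\rho(q)/\log q\to\infty$ while keeping this ratio small compared to $h(y)$. Since $q$ is squarefree, $\rad(q)=q$ and $\rho(q)=2^{k}$ when $q=\ell_1\cdots\ell_k$ is a product of $k$ distinct odd primes.

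\textbf{Construction.} Let $P_y=\prod_{p\le y}p$ and $M=8P_y$; recall $\log M\asymp y$. For $x_{q_n;NR,R}$ I take all $\ell_i\equiv 1 \mods{M}$. Then $\ell_i\equiv 1\mods 8$ gives $(2/\ell_i)=1$. For odd $p\le y$, quadratic reciprocity (with $\ell_i\equiv 1\mods 4$) gives $(p/\ell_i)=(\ell_i/p)=1$. So every prime $p\le y$ is a square modulo each $\ell_i$, hence modulo $q$ by CRT. Thus for $x<y$ no prime counted by the $NR$ side has appeared, and the reversed inequality in \eqref{eq: teamed up squares} fails (either both sides vanish, or the $R$ side is positive). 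Hence $x_{q;NR,R}\ge y$.

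For $x_{q'_n;R,NR}$ I choose $\ell_1$ via CRT and Dirichlet's theorem with $\ell_1\equiv 5\mods 8$ and $\ell_1$ a nonresidue modulo each odd $p\le y$. Then $(2/\ell_1)=-1$ and $(p/\ell_1)=(\ell_1/p)=-1$. The remaining $\ell_2,\dots,\ell_k$ are taken $\equiv 1\mods M$. Every $p\le y$ is then a nonsquare modulo $\ell_1$, hence modulo $q'$. So the $R$ side is empty for $x<y$, and $x_{q';R,NR}\ge y$.

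\textbf{Size control.} To bound $\log q$ from above, I invoke a quantitative form of Linnik's theorem: for some absolute $C$ and $X\ge M^{C}$, $\pi(X;M,a)\gg X/(\varphi(M)\sqrt M\log X)$. This gives at least $k$ distinct primes in each required class below $M^{C}$, as long as $k$ is far smaller than $M$; here $k\approx\log y$, so this is harmless. Hence $k\,y\ll\log q\ll C k y$, which gives
\[ \frac{2^k}{C_1 k y}\le\frac{\rho(q)}{\log q}\le \frac{2^k}{c_1 k y} \]
with absolute constants $c_1,C_1$. Now take any sequence $y_n\to\infty$ and set $g_n=c_1h(y_n)/(4LC_1)\to\infty$. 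Let $k=k_n$ be minimal with $2^k\ge C_1 k y_n g_n$. The lower bound then gives $\rho/\log q\ge g_n\to\infty$. Minimality of $k$ gives $2^k\le 4C_1ky_ng_n$ (up to the harmless change $k-1\to k$), so $L\rho/\log q\le 4LC_1 g_n/c_1= h(y_n)$, made strict by adjusting constants. Monotonicity of $h$ then yields $h(x_q)\ge h(y_n)>L\rho(q)/\log q$.

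\textbf{Main obstacle.} The delicate point is the upper bound on $\log q$. We need $k$ \emph{distinct} primes in a progression of modulus $M\approx e^{y}$, each of size $M^{O(1)}$, with constants uniform in $y$. Plain Dirichlet's theorem gives no size control, so I would rely on Linnik's theorem in its counting form, together with the observation that $k$ is only logarithmic in $y$.
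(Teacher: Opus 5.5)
Your construction is essentially the paper's: Chowla's argument (CRT, quadratic reciprocity, Linnik) forces every prime up to $y$ to be a residue, resp.\ a nonresidue, modulo $q$, and you then tune the number $k$ of prime factors so that $\rho(q)/\log q$ lies just below $h$. The difference is cosmetic. The paper takes one prime in each of $r_n$ distinct classes $a_\xi$, using only the least-prime form of Linnik and discarding primes below $\sqrt{Q_n}$. Your single class $1 \bmod M$, used with the counting form of Linnik, gives $\ell_i>M$ for free.

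However, one step fails as written. You assert $k\approx\log y$, but your $k$ is defined by $2^k\asymp k\,y_n\,h(y_n)/L$. So $k\approx\log_2\bigl(y_n h(y_n)/L\bigr)$, which depends on how fast $h$ grows. Take, say, $h(y)=\exp(\exp(\exp y))$: then $\log k\approx e^{y}\gg y\asymp\log M$. Your $k$ primes are distinct integers $\equiv 1\bmod M$, hence $\log q\ge\sum_{j\le k}\log(jM)\gg k(\log k+y)$. The upper bound $\log q\ll ky$ is therefore false, and there are not even $k$ candidates below $M^{C}$. Your two-sided estimate $\rho(q)/\log q\asymp 2^k/(ky)$, and with it the whole tuning of $k$, rests on that upper bound.

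The repair is one line, and it is exactly what the paper does with the hypothesis $\log f(n)\le n^{n/3}$ in Theorem~\ref{lqr-nr} and the replacement of $h$ by $\min\bigl(h,\exp(\exp(n^{n/3}))\bigr)$. The function $\tilde h=\min(h,\mathrm{id})$ is still increasing and tends to infinity, and $h\ge\tilde h$, so the conclusion for $\tilde h$ implies it for $h$. Hence you may assume $h(y)\le y$ (or simply cap $g_n$ at $y_n$). Then $k\ll\log y_n$, Linnik's counting form supplies $k$ distinct primes in each required class below $M^{C}$, and the rest of your argument goes through unchanged.
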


In particular, neither of the inequalities of Fiorilli's conjecture hold in general. The construction is an adaptation of a method of Chowla used in the study of the least quadratic nonresidue modulo a prime. We will see in Theorem \ref{skewes-qrnr} that the additional quantity $\log \rho(q_n)$ is crucial in bounding $x_{q_n; R, NR}$. However, we expect that the large deviation heuristics used by Fiorilli to state his conjecture should still hold for some sequences $(q_n)_n$ of highly composite numbers, for example the primorials $q_n = p_1 \dots p_n$ where $(p_n)_n$ is the increasing sequence of primes.

Then, we focus on the question of finding upper bounds for the generalized Skewes' numbers. A natural approach consists in making the convergence \eqref{eq: conv_delta_q_a_b} quantitative, since this will allow us to determine an explicit $Y_0$ such that for all $Y \geqslant Y_0$, the integral
\[ \int_{2}^{Y} \ind_{\mathcal{P}_{q;a,b}}(y) \frac{\mathrm{d}y}{y}\]
is positive, hence there exists $y \leqslant Y_0$ such that $\pi(y;q,a) > \pi(y;q,b)$. Inspired by recent work of Lamzouri \cite{lamzouri_eli} and Ng \cite{Ng-ELI}, we carry out this approach by introducing an effective form of the LI conjecture, which we call $\mathrm{ELI_A}$ (see Conjecture \ref{conj: ELI_A}), and which states that the non-trivial linear combinations of positive imaginary parts $\gamma_j$ of zeros of Dirichlet $L$-functions satisfy $$\left|\sum_{j=1}^{N(T)} m_j \gamma_j\right| \gg_A N(T)^{-N(T)^A},$$ 
where $N(T)$ is the number of $\gamma_j$ such that $0 < \gamma_j \leqslant T$, and the coefficients are integers such that $|m_j| \leqslant N(T)$. For our applications, it is often relevant to restrict to a subset $\mathcal{X}_q$ of the set of all Dirichlet characters modulo $q$, so we actually formulate a conjecture $\mathrm{ELI}_A(\mathcal X_q)$.

Under this assumption, we obtain a quantitative form of \eqref{eq: conv_delta_q_a_b}: 
\begin{theorem} \label{cor: effective rate a vs b}
 Let $q\ge 3$ and let $a,b$ be distinct invertible residue classes modulo $q$. Assume $\mathrm{GRH}$ and $\mathrm{ELI_A}(\mathcal{X}_q)$ for some $A>1$ and $\mathcal{X}_q$ equal to the support of the Fourier transform of $t_q = \varphi(q)(\ind_{\{a\}} - \ind_{\{b\}})$. Then, uniformly for $$X\ge (\varphi (q) \log q )^{(\mathcal{L} \varphi (q) \log q) ^A}, $$ 
where $\mathcal{L}>0$ is an absolute effective constant, we have
    \[\left| \frac{1}{X-\log 2} \int_{\log 2}^{X} \ind_{\mathcal{P}_{q;a,b}}(e^y) \mathrm{d}y  -  \delta_{q;a,b} \right| \ll_{A} \varphi(q)^{\tfrac{3}{2}} (\log q)^{\tfrac{3}{4}+\tfrac{1}{4A}}(\log \log X) (\log X) ^{-\tfrac{1}{4A}}  .\]
\end{theorem}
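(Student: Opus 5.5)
Under GRH the explicit formula gives, for $y\ge \log 2$,
\[ E(y):=\frac{y e^{y/2}}{\varphi(q)}\bigl(\pi(e^y;q,a)-\pi(e^y;q,b)\bigr) = -c_{q;a,b} + \sum_{\chi\in\mathcal X_q}\sum_{\gamma_\chi} \frac{\overline{\hat t_q(\chi)}\, e^{i\gamma_\chi y}}{\tfrac12+i\gamma_\chi} + \varepsilon(y), \]
with $\varepsilon(y)\to 0$ and $c_{q;a,b}$ the quadratic-residue bias constant. The plan is to compare the push-forward of $\frac{\mathrm{d}y}{X-\log 2}$ on $[\log 2,X]$ under $E$ with the limiting law $\mu_{q;a,b}$ in the $1$-Wasserstein metric. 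I would then pass from $\wass_1$ to the distribution function at $0$ using a regularity (bounded density) estimate for $\mu_{q;a,b}$, which holds under LI. The bound on $\wass_1$ comes in three pieces.

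\emph{Step 1 (truncation).} Replace $E$ by the trigonometric polynomial $E_T(y)=-c_{q;a,b}+2\Re\sum_{0<\gamma_j\le T} r_j e^{i\gamma_j y}$ and the limit law by the law $\mu_T$ of $-c+2\Re\sum r_j Z_j$ with independent uniform $Z_j$ on the circle. Since $\wass_1$ is controlled by $L^1$ distances under couplings, it suffices to bound $\frac1X\int|E-E_T|$ and $\E|\sum_{\gamma_j>T}|$. Both are handled by $L^2$ mean values: a Montgomery--Vaughan/Gallagher-type estimate for the tail of the explicit formula in the first case, and orthogonality in the second. Each gives $\ll \varphi(q)(\log (qT))^{1/2}T^{-1/2}$ up to logs, using $\sum_{\gamma>T}|\gamma|^{-2}\ll \varphi(q)\log(qT)/T$ summed over characters.

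\emph{Step 2 (quantitative Kronecker--Weyl).} This is the main step and the one I expect to be hardest. I would bound $\wass_1$ between the push-forward of $[\log 2,X]$ under $y\mapsto (\gamma_j y)_j\in \mathbb T^{N(T)}$ and Haar measure. I would use the Wasserstein version of Erd\H{o}s--Tur\'an: smooth with a Fej\'er/Jackson kernel of degree $M$, which costs $\ll N/M$ in $\wass_1$ on the torus. Then bound the Fourier coefficients $\frac1X\int e^{i\langle m,\gamma\rangle y}\mathrm{d}y\ll (X|\langle m,\gamma\rangle|)^{-1}$ for $0<\|m\|_\infty\le M$. Here $\mathrm{ELI}_A(\mathcal X_q)$ (with $M\le N(T)$) gives $|\langle m,\gamma\rangle|\gg N^{-N^A}$, so the error is $\ll N/M + M^{N} N^{N^A}/X$. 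Since $E_T$ is Lipschitz from the torus with constant $\ll\sum|r_j|$, this transfers to $\wass_1$ for $E_T$. Taking $M=N$ requires $\log X\gg N^A\log N$, i.e.\ $N(T)\approx (\log X/\log\log X)^{1/A}$. With $N(T)\asymp\varphi(q)T\log(qT)$ this gives $T\approx (\log X)^{1/A}/(\varphi(q)\log q)$ up to logs. The hypothesis $X\ge(\varphi(q)\log q)^{(\mathcal L\varphi(q)\log q)^A}$ is exactly what ensures $T\ge$ an absolute constant, so that there are enough zeros and the $\varepsilon(y)$ term and the $\log 2$ endpoint are negligible.

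\emph{Step 3 (from $\wass_1$ to the density).} Combining the pieces gives $\wass_1\ll \varphi(q)^{3/2}(\log q)^{1/2+1/(2A)}(\log\log X)^{?}(\log X)^{-1/(2A)}$, dominated by the truncation term at the chosen $T$. If $\mu$ has density bounded by $K$, smoothing $\ind_{(0,\infty)}$ by a ramp of width $\eta$ gives
\[ \bigl|\nu(0,\infty)-\mu(0,\infty)\bigr|\le \wass_1/\eta + K\eta, \]
and optimizing yields $\ll (K\wass_1)^{1/2}$. The density bound $K\ll 1$ (or a mild power of $\varphi(q)$) comes from the characteristic function $\prod_j J_0(2|r_j|\xi)$, which decays rapidly under LI. Taking the square root turns $(\log X)^{-1/(2A)}$ into $(\log X)^{-1/(4A)}$ and the $q$-powers into the stated $\varphi(q)^{3/2}(\log q)^{3/4+1/(4A)}$ after bookkeeping, with the $\log\log X$ coming from the choice of $N$.
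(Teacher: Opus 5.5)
\textbf{Overall.} Your architecture is the paper's: a three-term triangle inequality for $\wass_1$ (truncation of $E$, quantitative Kronecker--Weyl via $\mathrm{ELI}_A$ and a Lipschitz push-forward, truncation of the limit law), then a ramp approximation giving $2(K\wass_1)^{1/2}$. Your coupling bound for $\wass_1(\mu_T,\mu)$ through the independent $Z_j$ is a legitimate, slightly sharper substitute for the paper's Cauchy-sequence argument. The paper's argument needs only GRH; yours uses LI, which $\mathrm{ELI}_A$ implies.

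\textbf{Step 2 fails as written.} This is the step you rightly flag as the crux. With $M=N$ your smoothing cost $N/M$ is of order $1$. You then multiply it by $\sum_{j\le N}|r_j|$, which is the Lipschitz constant of $E_T$ for the sup metric on the torus and grows with $T$ (roughly like $\lambda(t_q)\log T\log(qT)$). So your Kronecker--Weyl term is not even bounded, let alone small. Enlarging $M$ beyond $N$ is not free: $\mathrm{ELI}_A$ at height $T$ only covers $\|m\|_\infty\le N(T)$. Invoking it at a larger height changes the relation between $N$ and $\log X$ and destroys the exponent $1/(4A)$.

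What is missing is the right metric. The paper puts the $\ell^2$-product of arc-length metrics on $(\Ss^1)^N$. For this metric the Bobkov--Ledoux inequality has smoothing term $4\sqrt{3}\sqrt{N}/H$, and $E_T$ is Lipschitz with constant $2(\sum_j|r_j|^2)^{1/2}\ll\lambda(t_q)\sqrt{\log q}$, uniformly in $T$. With $H=N$ this step contributes $\ll\lambda(t_q)\sqrt{\log q}\,N^{-1/2}$, which is dominated by the truncation terms. Alternatively, estimate the smoothing error of $E_T$ itself as $\sum_j|r_j|\,\E|\theta_j|\ll M^{-1}\sum_j|r_j|$, with $\theta$ distributed according to the kernel, rather than multiplying a torus bound by a global Lipschitz constant.

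\textbf{The density step needs a $q$-uniform input.} Rapid decay of $\prod_j J_0(2|r_j|\xi)$ under LI does not by itself bound $K$ uniformly in $q$. The paper first uses Parseval, $2\varphi(q)=\sum_\chi|\chi(a)-\chi(b)|^2$, to show that at least $\varphi(q)/3$ characters satisfy $|\chi(a)-\chi(b)|\ge 1$, so $\|\hat{t_q}\|_\infty\ge 1$. It then applies Fiorilli--Martin's bound $|F(x,\chi)F(x,\overline{\chi})|\le e^{-c|x|}$ for $|x|\ge 200$ to that single character, bounding every other Bessel factor by $1$. This gives $\|\hat{\mu}\|_1\ll 1$, hence $K\ll 1$.

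\textbf{Bookkeeping.} The square root of your claimed $\wass_1$ bound gives $\varphi(q)^{3/4}$, not $\varphi(q)^{3/2}$. The stated $\varphi(q)^{3/2}(\log q)^{3/4+1/(4A)}$ comes from $C(t_q)^{3/4}(\log C(t_q))^{1/(4A)}$ with the crude bound $C(t_q)\ll\varphi(q)^2\log q$. So once Step 2 is repaired, your estimate implies the statement rather than reproducing it. Also, $E(y)$ should carry the factor $e^{-y/2}$, not $e^{y/2}$.
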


 As a corollary, we obtain a double exponential bound for the generalized Skewes' number $x_{q;a,b}$.

\begin{theorem}\label{skewes-primerace}
 Let $q \geq 3$ and let $a,b$ be distinct invertible residue classes modulo $q$. Under the assumptions of the Theorem $\ref{cor: effective rate a vs b}$, we have \[\log \log \log x_{q;a,b} \ll_A \log \varphi(q).\]
\end{theorem}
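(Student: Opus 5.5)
The plan is to deduce the bound directly from Theorem~\ref{cor: effective rate a vs b}, using a lower bound on $\delta_{q;a,b}$. Without loss of generality we are in the interesting case where $a$ is a square and $b$ is not, so that $\delta_{q;a,b}<\tfrac12$ but is still positive. Under GRH and LI (which is implied by $\mathrm{ELI}_A(\mathcal X_q)$), Rubinstein--Sarnak give $\delta_{q;a,b}>0$. What we need here is a \emph{quantitative} lower bound of the form
\[
\delta_{q;a,b}\ \ge\ \exp\bigl(-c\,\varphi(q)^{C}\bigr)
\]
for absolute constants $c,C$. I would obtain it from the explicit description of the limiting distribution as the law of $\sum_{\chi}\sum_{\gamma_\chi>0} \frac{2\Re(\ldots)}{\sqrt{1/4+\gamma_\chi^2}}\, \sin(\theta_{\gamma})$, with the $\theta_\gamma$ independent and uniform. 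One can force the bias term $\asymp \rho(q)$ to be beaten by the sum. The plan is to align the phases of the $\asymp \varphi(q)\log q\cdot T$ zeros of height up to $T=e^{O(\rho(q))}$, which contribute $\gg \sum 1/\gamma \gg \varphi(q)\log^2 T$ and so exceed the bias. This alignment costs probability $\exp(-O(\varphi(q) T\log T))$. Controlling the tail of the remaining zeros by a variance/Chebyshev bound, and using $\rho(q)\le \varphi(q)$, gives a bound of the required shape. Alternatively, Fiorilli's large deviation estimates could be quoted if they hold uniformly.

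Next, note that if $x_{q;a,b}>e^X$, then $\ind_{\mathcal P_{q;a,b}}(e^y)=0$ for all $y\le X$. The left-hand side of Theorem~\ref{cor: effective rate a vs b} therefore equals $\delta_{q;a,b}$. Hence for every admissible $X$ with $X\ge (\varphi(q)\log q)^{(\mathcal L\varphi(q)\log q)^A}$ we would get
\[
\delta_{q;a,b}\ll_A \varphi(q)^{3/2}(\log q)^{1}(\log\log X)(\log X)^{-1/(4A)}.
\]
Choose $\log X = \exp\bigl(K_A\,\varphi(q)^{C'}\bigr)$ with $K_A$ large. Then the right side is $\exp(-K_A\varphi(q)^{C'}/(4A)+O(\log\varphi(q)+\varphi(q)^{C'}))$, which is smaller than the lower bound $\exp(-c\varphi(q)^C)$ once $C'\ge C$ and $K_A$ is large. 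The constraint $\log X\ge (\mathcal L\varphi(q)\log q)^A\log(\varphi(q)\log q)$ is also satisfied. This contradiction shows $x_{q;a,b}\le e^X$, so $\log\log x_{q;a,b}\le \log X=\exp(O_A(\varphi(q)^{C'}))$. Taking one more logarithm gives $\log\log\log x_{q;a,b}\ll_A \varphi(q)^{C'}$.

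This falls short of the claimed $\log\varphi(q)$. To reach it, the lower bound on $\delta_{q;a,b}$ must be only polynomially small in a doubly exponential sense. Concretely we need $\log(1/\delta_{q;a,b})\ll \varphi(q)^{O(1)}$, and then $\log X=\varphi(q)^{O_A(1)}$ must suffice. Since the rate decays like $(\log X)^{-1/(4A)}$, we need $(\log X)^{1/(4A)}\gg \varphi(q)^{3/2}\log q\,\log\log X/\delta$. That forces $\log X\approx (\varphi(q)^{2}/\delta)^{4A}$, and the triple log is $\ll_A\log\varphi(q)+\log\log(1/\delta)$. So the target reduces to showing $\log\log(1/\delta_{q;a,b})\ll\log\varphi(q)$, i.e.\ $\delta_{q;a,b}\ge\exp(-\varphi(q)^{O(1)})$. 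The phase-alignment argument above gives exactly this if one takes $T=O(1)$ fixed and uses that the bias is $\le \rho(q)\le\varphi(q)$. Even at bounded height the sum of $\sum_{\gamma\le T}1/\gamma$ over the $\varphi(q)$ characters contributes $\gg\varphi(q)\log q$, already exceeding the bias, at probability cost $\exp(-O(\varphi(q)\log q\log\varphi(q)))$. With $\log\log(1/\delta)\ll\log\varphi(q)$ the final bound $\log\log\log x_{q;a,b}\ll_A\log\varphi(q)$ follows. The main obstacle is this quantitative lower bound for $\delta_{q;a,b}$, in particular controlling the infinite tail of the random series uniformly in $q$.
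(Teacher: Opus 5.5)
There is a genuine gap in the reduction in your last paragraph: you lose a logarithm. To force the error in Theorem~\ref{cor: effective rate a vs b} below $\delta_{q;a,b}$ you need $(\log X)^{1/(4A)}\gg_A \varphi(q)^{3/2}\log q\,(\log\log X)/\delta_{q;a,b}$. This means $\log\log X\approx 4A\log\bigl(\varphi(q)^{2}/\delta_{q;a,b}\bigr)$. Since $\log\log\log x_{q;a,b}\le\log\log X$, this only gives
\[
\log\log\log x_{q;a,b}\ll_A \log\varphi(q)+\log\bigl(1/\delta_{q;a,b}\bigr),
\]
not $\log\varphi(q)+\log\log(1/\delta_{q;a,b})$. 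Your own second paragraph shows this. There, the input $\delta_{q;a,b}\ge\exp(-c\varphi(q)^C)$ correctly yields only $\log\log\log x_{q;a,b}\ll_A\varphi(q)^{C'}$, yet the third paragraph asserts that the same input gives $\log\varphi(q)$. In particular, your phase-alignment bound $\delta_{q;a,b}\ge\exp\bigl(-O(\varphi(q)\log q\log\varphi(q))\bigr)$ proves only $\log\log\log x_{q;a,b}\ll_A\varphi(q)\log q\log\varphi(q)$. To reach the stated bound you need $\delta_{q;a,b}\ge\varphi(q)^{-O(1)}$.

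The missing ingredient is that $\delta_{q;a,b}$ is bounded below by an \emph{absolute} constant. The paper quotes Theorem~1.11 of Fiorilli--Martin, which gives $\delta_{q;a,b}\ge 10^{-5}$ uniformly in $q,a,b$ under GRH and LI, the worst case being $q=24$, $a=1$, $b=5$. The choice $\log\log X_0=C_A\log\varphi(q)$ then concludes exactly as in your second paragraph.

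Phase alignment is a large-deviation device and is the wrong tool here. The mean $-\rho(q)$ of the limiting distribution is small compared with its standard deviation $\sqrt{\Var(t_q)}\asymp\sqrt{\varphi(q)\log q}$, since $\rho(q)\ll_\varepsilon q^{\varepsilon}$. So $B(t_q)$ stays bounded, and indeed $\delta_{q;a,b}\to\tfrac12$ as $q\to\infty$ by Rubinstein--Sarnak; there is no rare event to pay for. Your fallback of quoting Fiorilli's large-deviation estimates does not apply as stated either. Those estimates address $B(t_q)\to-\infty$ and need $\widehat{t_q}(\chi)\in\{-1,0,1\}$, whereas here $\widehat{t_q}(\chi)=\overline{\chi(a)}-\overline{\chi(b)}$.
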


We note that Schlage-Puchta studied the related question of finding an upper bound for the first $x$ for which
\[
\pi(x;q,1) > \max_{a \in (\Zz / q \Zz)^{\times} \setminus \{1 \} } \pi(x;q,a).
\]
In \cite[Th. 1]{schlage-puchta_sign_changes}, he obtained under GRH that there exists such an $x$ satisfying
\[\log \log x < (q^{+})^{170} + \exp(18 \rho(q)),\]
where $q^+ = \max(q, \exp(1260))$, and also gave a lower bound on the number of sign changes of $\pi(t;q,1) - \max_{a \neq 1 \mods q } \pi(t;q,a)$ in the range $2 \leqslant t \leqslant x$.

For simplicity, we only stated Theorem \ref{cor: effective rate a vs b} for the sign changes of $\pi(x;q,a)- \pi(x;q,b)$, but we also prove similar bounds for more general prime counting functions (denoted $\pi(x;t_q)$ in Section \ref{sec: prime-races}). In particular, we can also find an upper bound for the Skewes' number $x_{q;R,NR}$ associated with the race between all quadratic residues and all nonresidues. In that case, our result is the following.
\begin{theorem} \label{skewes-qrnr}
Let $(q_n)_n$ be a sequence of integers such that
 $\frac{\rho(q_n)}{\log \rad (q_n)}\to \infty$. Assume $\mathrm{GRH}$ and $\mathrm{ELI}_A(\mathcal{X}_{q_n})$ for some $A>1$ and $\mathcal{X}_{q_n}$ equal to the set of non-trivial quadratic Dirichlet characters modulo $q_n$. Then we have \[ \log \log \log x_{q_n;R,NR}  \ll_A \frac{\rho(q_n)}{\log \rad(q_n)}+ \log \rho \bigl(q_n\bigr) .\]
\end{theorem}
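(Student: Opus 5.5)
The plan is to combine a quantitative convergence result, of the same shape as Theorem \ref{cor: effective rate a vs b} but for the general race $\pi(x;t_q)$ with $t_q$ the (normalised) indicator difference $\ind_{R_q}/|R_q|-\ind_{NR_q}/|NR_q|$, with a lower bound for the logarithmic density $\delta_{q;R,NR}$ of the rare event \eqref{eq: teamed up squares}. Once the effective rate is known, $x_{q;R,NR}$ is at most the first $e^{X}$ for which the error term falls below $\delta_{q;R,NR}/2$. For then the integral $\int_{\log 2}^X \ind(e^y)\,\mathrm{d}y$ is positive, so the event occurs for some $y\le e^X$.

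\textbf{Step 1: the rate.} The Fourier support of $t_q$ is $\mathcal{X}_q$, the set of nontrivial quadratic characters. There are $\rho(q)-1$ of them, each of conductor dividing $\rad(q)$, or $4\rad(q)$ at the prime $2$. Under GRH, the explicit formula gives
\[
e^{-y/2}\bigl(\pi(e^y;t_q)\bigr)\approx -(\rho(q)-1)+\sum_{\chi\in\mathcal X_q}\sum_{\gamma_\chi}\frac{e^{i\gamma_\chi y}}{1/2+i\gamma_\chi}.
\]
The negative constant comes from the squares of primes. I will run the quantitative Kronecker--Weyl/Wasserstein argument used for Theorem \ref{cor: effective rate a vs b}, truncating zeros at height $T$. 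The number of relevant zeros is then $N(T)\asymp \rho(q)T\log(\rad(q)T)$, and $\mathrm{ELI}_A(\mathcal X_q)$ controls the small divisors. This yields an error of the form $\mathrm{poly}(\rho(q)\log \rad q)\,(\log\log X)(\log X)^{-c/A}$, valid for $X\ge (\rho(q)\log\rad q)^{(\mathcal L\rho(q)\log \rad q)^A}$. The point is to redo the bookkeeping with $\rho(q)$ and $\log\rad(q)$ in place of $\varphi(q)$ and $\log q$. The count of characters involved and their conductors are the only inputs.

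\textbf{Step 2: lower bound for the density.} The limiting distribution is that of $-(\rho-1)+\sum_{\chi}\sum_{\gamma} 2\Re(U_\gamma)/|1/2+i\gamma|$, with independent uniform $U_\gamma$ on the unit circle. Its variance is $V\asymp \rho(q)\log \rad(q)$, since each character contributes $\sum_\gamma |1/2+i\gamma|^{-2}\asymp\log \mathrm{cond}(\chi)$. We need $\delta\ge \exp(-C\rho/\log\rad q)$ up to factors, which is a large-deviation lower bound with threshold $\rho-1$ and variance $V$. I would establish this by an exponential tilt of the independent summands, or more cheaply by forcing the phases of the $O(\log \rad q)$-weight low-lying zeros to be aligned. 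Concretely, I will restrict each of the $\rho-1$ characters' first few zeros to arcs of fixed length. Each restriction costs a constant probability factor and yields a contribution $\gg$ (sum of $1/|\gamma|$ over those zeros). A cleaner route is the Gaussian-type tail lower bound $\delta\gg\exp(-c\rho^2/V)=\exp(-c\rho/\log\rad q)$. I would justify it via the cumulant generating function, $\log\E e^{sZ}\approx s^2V/4$ for $s\ll\log$ range, and a Paley--Zygmund/Cramér tilt. A weaker bound of $\exp(-C\rho/\log \rad q)\cdot\rho^{-O(1)}$ suffices.

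\textbf{Step 3: conclusion.} We require $\mathrm{poly}(\rho\log\rad q)(\log X)^{-c/A}\log\log X\le \tfrac12\exp(-C\rho/\log\rad q)$, together with the range condition. This gives
\[
\log\log X\ll_A \frac{\rho}{\log\rad q}+\log\rho+\log\log\rad q .
\]
The last term is absorbed because $\rho\le 2^{\omega(q)+1}$ is at least comparable to $\log\rad q$ in the regime considered. The range condition contributes $\log\log X\ll_A\log(\rho\log\rad q)$, which is again $\ll\log\rho$ under the hypothesis $\rho/\log\rad q\to\infty$. Taking $\log$ once more, since $x=e^X$, gives the theorem. I expect Step 2, the explicit large-deviation lower bound with dependence $\rho/\log\rad q$ uniform in $q$, to be the main obstacle. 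It needs a uniform lower bound on $\sum_\gamma|\gamma|^{-2}$ per character, and control of the zero at the real point/multiplicities, which is excluded by LI.
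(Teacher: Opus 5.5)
Your plan is the paper's proof: combine the rate of Corollary \ref{cor-convdensrvsnr} (after reducing to the modulus $d_q$, which is how $\log q$ becomes $\log\rad(q)$) with $\delta_{\ell_q}\gg\exp\bigl(-c_1B(\ell_q)^2\bigr)$ from Lemma \ref{density-bound} and Fiorilli's estimate $B(\ell_q)\asymp-\sqrt{\rho(q)/\log\rad(q)}$, then solve for $X$ as you do; the only difference is that the paper cites Fiorilli and Fiorilli--Jouve/Montgomery--Odlyzko for Step 2, which your Cram\'er-tilt sketch would re-derive. Only the tilt route works, however: aligning the phases of $O(1)$ zeros per character needs at least about $\rho(q)/4$ aligned zeros, hence costs $e^{-O(\rho(q))}$, and would give only $\log\log\log x_{q;R,NR}\ll\rho(q)$.
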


One of the main tools to prove such quantitative rates of convergence is the $1$-Wasserstein metric and an inequality due to Bobkov-Ledoux akin to an Erd\H os--Turán inequality to obtain a quantitative version of the Kronecker-Weyl equidistribution theorem, as explained in Section \ref{sec: kronecker-weyl}. A key component of our proofs is to make explicit the dependencies on $q$ in some classical results going back to Rubinstein-Sarnak's work \cite{RS94}.

We note that the shape of the lower bound we propose in Conjecture \ref{conj: ELI_A} is just a working hypothesis, but it could be replaced by any effective lower bound on linear combinations of the $\gamma_j$, and our method would still give rates of convergence to the relevant limiting distributions.

The methods developed in this paper admit natural unconditional generalizations in the \enquote{function field case}, where instead of counting prime numbers we count irreducible polynomials over finite fields. This will be detailed in a follow-up to this paper.

\subsection*{Organization of the paper} In Section \ref{sec : disproof_fiorilli} we provide a construction of sequences of integers that allow us to disprove both inequalities of Fiorilli's conjecture. In Section \ref{sec: kronecker-weyl}, we recall some fundamental properties of Wasserstein metrics and prove a quantitative version of the Kronecker--Weyl Theorem with respect to $\wass_1$. In Section \ref{sec: lin-indep}, we formulate an effective linear independence hypothesis for the positive imaginary parts of zeros of Dirichlet $L$-functions modulo $q$. In Section \ref{sec: prime-races}, we use the aforementioned hypothesis and bounds on Wasserstein metrics to obtain an effective rate of convergence in the context of prime number races. This also allows us to obtain the first (conditional) bounds on Skewes numbers for some of those races. 

\subsection*{Notations} 
\begin{itemize}
\item $\rad(q)$ denotes the radical of an integer $q$.

\item $\rho(q)$ denotes the number of $x \in (\Zz / q \Zz)^{\times}$ such that $x^2 \equiv 1 \mods{q}$.

\item $t_q$ will denote a map from $(\Zz / q \Zz)^{\times}$ to $\Rr$, orthogonal to the principal Dirichlet character $\chi_0$ modulo $q$ (we recall the definition of the inner product and of the Fourier transform at the beginning of Section \ref{sec: prime-races}).

\item $t_q^*:(\Zz/q\Zz)^\times \to \Rr$ is given by $t_q^*(a)=t_q(a^2)$ for all $a\in (\Zz/q\Zz)^\times$.
\item $\lambda(t_q)$ denotes the $L^1$ norm of $\widehat{t_q}$, \emph{i.e.}  $\lambda(t_q):=\sum_{\chi} \bigl| \langle t_q,\chi \rangle \bigr|$, where the sum ranges over multiplicative characters $\chi$ modulo $q$.

\item $C(t_q)=\max(\lambda(t_q)(\log q)^2,\lambda(t_q^*)\log q)$ 
\item $k(t_q)$ is the real number defined by the equality $\log k(t_q) = \frac{1}{|\supp(\widehat{t_q})|} \sum_{\chi \in \supp(\widehat{t_q})} \log q_{\chi},$ where $q_{\chi}$ denotes the conductor of $\chi$.

\item $(\gamma_n)_{n \geq 1}$ is an enumeration in non-decreasing order of the positive imaginary parts of the non trivial zeros, counted with multiplicity, of the $L$-functions attached to Dirichlet characters in $\supp(\widehat{t_q})$. 

\item With the previous notations, $N(T,t_q)$ denotes the number of indices $n$ such that $0 < \gamma_n \leq T$.

\item $\mathrm{ELI}_A(t_q)$ denotes the conjecture $\mathrm{ELI}_A(\mathcal{X}_q)$ (Conjecture \ref{conj: ELI_A}) when $\mathcal{X}_q = \mathrm{supp}(\widehat{t_q})$.

\end{itemize}
Due to the definition of the logarithmic density, we are often led to think of functions defined on $\Rr_{>0}$ as random variables defined on the (varying) probability space $[\log 2, X]$ endowed with its renormalized Lebesgue measure. With this point of view in mind,
\begin{itemize}
\item $\mu_X$ is defined in \eqref{eq: f de E} as the distribution of the random variable $E$ defined in \eqref{Def-E(y)}.
\item $\mu_X^{(T)}$ is defined in \eqref{eq: f de ET} as the distribution of the random variable $E^{(T)}$ defined in \eqref{Def-ET(y)}.
\item $\nu_X^{(T)}$ is defined in \eqref{eq: def nuXT} as the distribution of the random variable $y \mapsto \left( e^{i \gamma_1 y}, \dots, e^{i \gamma_{N(T,t_q)} y} \right)$ (which takes values in $(\Ss^1)^{N(T,t_q)}$).
\item $\mu^{(T)}$ is defined in \eqref{eq: def mu^T} as the pushforward measure of $\lambda_T$ (which is the Haar probability measure on $\left( \Ss^1 \right)^{N(T, t_q)}$) via the map $g^{(T)}$ defined in \eqref{def gT}.
\end{itemize}

\subsection*{Acknowledgements} We would like to thank Bence Borda for answering our questions on \cite{borda_cuenin}. We also thank Lucile Devin, Daniel Fiorilli, Florent Jouve, Emmanuel Kowalski and Youness Lamzouri for helpful discussions to improve this paper.

\section{A disproof of Fiorilli's conjecture} \label{sec : disproof_fiorilli}
For a prime $p\ge 3$, let $n_p\ge 1$ denote the least quadratic nonresidue modulo $p$. The integer $n_p$ has been studied by several authors. Friedlander~\cite{Frid}, Salié~\cite{Salié} and Chowla \cite[Th. 3.10]{bruni} showed independently that there are infinitely many primes $p$ for which $n_p\gg \log p$. 
 This result was improved by Graham and Ringrose~\cite{GrRi} who proved that there are infinitely many primes for which $n_p\gg \log p (\log \log \log p)$. In terms of upper bounds, the best unconditional result is due to Burgess \cite{burgess}, which states that $n_p \ll_{\varepsilon} p^{\frac{1}{4 \sqrt{e}} + \varepsilon}$ for all $\varepsilon >0$.

The integer $n_p$ was also studied conditionally on GRH, for instance Montgomery~\cite{Montg} proved that there are infinitely many primes for which $n_p \gg \log p \log \log p$, and Ankeny~\cite{Ank} proved that $n_p\ll (\log p)^2$. In this paper, we are interested in the least primes that are quadratic residues and nonresidues modulo a composite modulus $q$. For $q\ge 3$, let $\Phi(q)$ (resp. $\Psi(q)$) denote the least prime that is a quadratic residue (resp. nonresidue) modulo $q$. We prove the following result:
\begin{theorem}\label{lqr-nr}
Let $f\colon \Nn\to \Rr_{>0}$ be an increasing function tending to infinity, such that $\log f(n)\leq n^{n/3}$. Then, there exist sequences $(q_n)_n$ and $(q_n')_n$ of square-free integers such that \begin{align}
     \log q_n\sim \log q_n' \asymp n (\log n)(\log n f(n) )\quad &\text{as}\quad n\to \infty, \label{q-size}\\
    \frac{\rho(q_n')}{\log q_n'}\asymp \frac{\rho(q_n)}{\log q_n}\asymp f(n) \quad &\text{as}\quad n\to \infty, \label{quotient-size}\\
    \min (\Psi(q_n),\Phi(q_n') )\gg n \log n\quad &\text{as}\quad n\to \infty. \label{Chowla-bound}
\end{align}
\end{theorem}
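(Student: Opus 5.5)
The plan is to adapt Chowla's construction. We force all small primes to be quadratic residues (for $q_n$), or all to be nonresidues (for $q_n'$), modulo every prime factor or one distinguished prime factor. We do this by choosing the prime factors of $q_n,q_n'$ in suitable arithmetic progressions modulo a primorial. Fix $y=y_n:=p_n\asymp n\log n$, the $n$-th prime, and let $M=M_n:=8\prod_{3\le \ell\le y}\ell$, so that $\log M\asymp y\asymp n\log n$ by the prime number theorem.

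If $p\equiv 1 \mods{M}$, then $(2/p)=1$ since $p\equiv 1\mods 8$. For every odd prime $\ell\le y$, quadratic reciprocity (using $p\equiv 1\mods 4$) gives $(\ell/p)=(p/\ell)=(1/\ell)=1$. Similarly, we choose a residue class $c\mods M$ with $c\equiv 5\mods 8$ and $c\equiv a_\ell\mods \ell$, where $a_\ell$ is a fixed nonresidue modulo $\ell$, for each odd $\ell\le y$. Any prime $p\equiv c\mods M$ then satisfies $(\ell/p)=-1$ for every prime $\ell\le y$.

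Recall that for odd square-free $q$, an integer $a$ is a square modulo $q$ iff $(a/p)=1$ for all $p\mid q$, and that $\rho(q)=2^{\omega(q)}$. We take the two moduli as follows.
\begin{itemize}
\item $q_n=p_1\cdots p_k$ with all $p_i\equiv 1\mods M$ distinct primes. Then every prime $\ell\le y$ is a square modulo $q_n$, so $\Psi(q_n)>y\gg n\log n$.
\item $q_n'=p_1'p_2'\cdots p_k'$ with $p_1'\equiv c\mods M$ and $p_i'\equiv 1\mods M$ for $i\ge 2$. Then every prime $\ell\le y$ is a nonresidue modulo $p_1'$, hence modulo $q_n'$, so $\Phi(q_n')>y$.
\end{itemize}
This gives \eqref{Chowla-bound}.

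To control sizes, we pick all prime factors in a common window $[X,2X]$ with $X=M^{L}$, where $L$ is a fixed Linnik-type constant. Then $\log q_n=k\log X+O(k)$ and likewise for $q_n'$, so $\log q_n\sim \log q_n'\asymp k\,n\log n$. Now choose $k=k_n$ to be the integer nearest to the solution of $2^k= f(n)\,k\,n\log n$, so $k\asymp \log\bigl(n(\log n)f(n)\bigr)$. With this choice, $\rho(q_n)/\log q_n\asymp 2^k/(k n\log n)\asymp f(n)$, and the same holds for $q_n'$. Rounding $k$ only changes constants. This yields \eqref{quotient-size}, and then $\log q_n\asymp n(\log n)\log(nf(n))$ gives \eqref{q-size}. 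Since $f$ is increasing, $k_n\to\infty$, and everything is consistent for large $n$.

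The main obstacle is to guarantee that each of the progressions $1\mods M$ and $c\mods M$ contains at least $k$ primes in $[X,2X]$, with $X$ only a fixed power of $M$. The modulus $M$ is enormous compared with $n$, and this must hold unconditionally. We will invoke the quantitative form of Linnik's theorem: for $x\ge M^{L}$ and $(a,M)=1$,
\[
\pi(2x;M,a)-\pi(x;M,a)\gg \frac{x}{\varphi(M)\sqrt{M}\log x},
\]
as in Heath-Brown's work on Linnik's constant, which handles a possible Siegel zero. For $X=M^L$ with $L$ large, this count is at least $M^{L-2}$. It therefore exceeds $k$ as soon as $k\le M$.

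This is exactly where the hypothesis $\log f(n)\le n^{n/3}$ enters. It gives $k\ll \log n+\log f(n)\ll n^{n/3}$, which is far smaller than $M=e^{(1+o(1))y}$ with $y\asymp n\log n$. Hence enough primes exist and the construction goes through.
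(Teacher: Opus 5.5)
Your proof is correct. Its architecture matches the paper's: Chowla's construction with modulus $8\prod_{\ell\le p_n}\ell$, quadratic reciprocity to make every prime up to $p_n$ a residue modulo each factor (or a nonresidue modulo one extra factor $\equiv 5 \mods{8}$), Linnik's theorem to keep every factor of size $\log p\asymp n\log n$, and about $\log_2\bigl(k f(n)\, n\log n\bigr)$ factors.

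The difference is how you obtain $k$ distinct primes of comparable size.

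\emph{The paper's route.} It uses only the \emph{least-prime} form of Linnik's theorem. It lifts each of the $s_n=\prod(p_i-1)/2$ tuples of residues to a class $a_\xi \mods{Q_n}$ and takes one prime $\mathfrak p_\xi\le Q_n^{O(1)}$ per class, so distinctness comes for free. It then discards the at most $\sqrt{Q_n}$ primes below $\sqrt{Q_n}$, using $s_n>2\sqrt{Q_n}$, to secure the lower bound on their size.

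\emph{Your route.} You work in the single class $1\mods{M}$ and invoke the \emph{counting} form of Linnik's theorem, with the Siegel-zero loss $\sqrt{M}$. This is a stronger analytic input, but it simplifies the bookkeeping, because any prime $p\equiv 1\mods{M}$ automatically satisfies $p>M$.

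You do not need the dyadic window $[X,2X]$, which is the one part of your citation that would need checking. The bound $\pi(M^L;M,1)\gg M^{L-3/2}/\log M$ already gives $k$ primes with $\log M<\log p\le L\log M$. You can then get $\log q_n\sim\log q_n'$ by reusing $p_2,\dots,p_k$ in $q_n'$, as the paper does with $q_n'=\mathfrak q\, q_n$.

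The hypothesis $\log f(n)\le n^{n/3}$ plays the same role in both arguments: it ensures there are enough primes available ($r_n<\sqrt{Q_n}$ in the paper, $k\le M$ for you).
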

Our strategy relies on Chowla's argument as presented in \cite{bruni} to construct the prime factors of $q_n$, which will lead to a bound of type~\eqref{Chowla-bound}, but in order to obtain~\eqref{q-size} and~\eqref{quotient-size} we need to choose these prime factors to have a specific uniform size.
\begin{proof}
Let $n$ be a sufficiently large integer, and let $p_1,\dots,p_n$ be the first $n$ odd primes. Denote $Q_n:=8p_1\cdots p_n$ and \[S_n:=\left\{(x_1,\dots,x_n)\in \prod_{i=1}^n \bigl(\Zz/p_i\Zz)^\times\, :\, x_i\text{ is a quadratic residue modulo }p_i\ (1\le i \le n)\right\} \, .\]
The Chinese remainder theorem shows that for $\xi=(x_1,\dots,x_n)\in S_n$ there exists an integer $a_\xi$ such that \[ a_\xi \equiv 1 \mods{8} \quad\text{and}\quad a_\xi \equiv x_i \mods{p_i} \quad (1\le i\le n)\, .\]
Linnik's theorem shows that there exists a prime $\mathfrak{p}_\xi\equiv a_\xi \mods{Q_n}$ such that $\log \mathfrak{p}_\xi \ll \log Q_n$.
Now, if we take $q_n$ to be a product of $\mathfrak{p}_\xi$ for distinct $\xi \in S_n$, we obtain ~\eqref{Chowla-bound} immediately; indeed, by quadratic reciprocity, we have that $2,p_1,\dots,p_n$ are all quadratic residues modulo all factors of $q_n$, thus they are all quadratic residues modulo $q_n$ by the Chinese remainder theorem. This implies that $\Psi(q_n)>p_n\gg n\log n $ by the prime number theorem.\\
Define \[\mathcal{S}_n:=\left\{\xi \in S_n\, :\, \mathfrak{p}_\xi>\sqrt{Q_n} \right\}\, ,\]
and $ s_n:= |S_n|=\prod_{i=1}^n \frac{p_i-1}{2} $. The prime number theorem implies that \[\log s_n=\sum_{i=1}^n \log p_i+\sum_{i=1}^n \log \left(\frac{p_i-1}{2p_i}\right) \sim p_n\sim n\log n\quad (n\to \infty) \, ,\]
and that $\log Q_n \sim n\log n$. Therefore, for sufficiently large $n$, we have $s_n>2\sqrt{Q_n}$, which implies $|\mathcal{S}_n|>\sqrt{Q_n}$ since $\xi \mapsto \mathfrak{p}_{\xi}$ is injective. Since the function $x\mapsto 2^x/x$ is continuous and increasing in $(2,\infty)$, there exists a unique $x_n \in (2,\infty)$ such that \[\frac{2^{x_n}}{x_n}=n (\log n) f(n) \, ,\] assuming $n$ is large enough.
We have in particular that $x_n\sim \log (n f(n))/\log 2$ as $n \to \infty$. Denote $r_n=\lfloor x_n \rfloor$. Our assumption on $f$ implies that \[\log \log( n f(n)) \leq \left(\frac{n}{3}+1\right)\log n  \, .\] Since $n$ is sufficiently large, we have $r_n< \sqrt{Q_n}$. Thus, we can choose $r_n$ distinct elements $\xi_1,\dots,\xi_{r_n} \in \mathcal{S}_n$. Define
$ q_n:=\mathfrak{p}_{\xi_1} \cdots \mathfrak{p}_{\xi_{r_n}}$. 
We have \[\log q_n =\sum_{i=1}^{r_n}\log \mathfrak{p}_{\xi_i} \asymp r_n \log Q_n\asymp n \log n \log (n f(n)) \, ,\]
and \[ \frac{\rho(q_n)}{\log q_n}\asymp \frac{2^{r_n}}{r_n n\log n} \asymp \frac{2^{x_n}}{x_n (n\log n)}=f(n) \, .\]
This proves the statement for $q_n$. Let $\xi'=(y_1,\dots,y_n)\in \prod_{i=1}^n \bigl(\Zz/p_i\Zz)^\times $ such that, for all $1\le i \le n$, $y_i$ is a quadratic nonresidue modulo $p_i$. By the Chinese remainder theorem there exists $b\in \Zz$ such that \[ b \equiv 5\mods{8} \quad\text{and}\quad b\equiv y_i \mods{p_i} \quad (1\le i \le n)\, .\]
By Linnik's theorem, there exists a prime $\mathfrak{q}$, such that $\mathfrak{q}\equiv b \mods{Q_n}$ and $\log \mathfrak{q}\ll \log Q_n \ll n\log n$. Define $q_n':=\mathfrak{q}q_n$. We have $\log q_n'=\log \mathfrak{q}+\log q_n \sim \log q_n$, which proves \eqref{q-size} and \eqref{quotient-size} for $q_n'$. Moreover, applying quadratic reciprocity again, we deduce \[ \left( \frac{2}{q_n'} \right)=\left(\frac{2}{\mathfrak{q}}\right)\prod_{i=1}^{r_n}\left(\frac{2}{\mathfrak{p}_{\xi_i}}\right)=-1\, ,\]
and for $1\le i\le n$,
\[\left( \frac{p_i}{q_n'} \right)=\left(\frac{p_i}{\mathfrak{q}}\right)\prod_{i=1}^{r_n}\left(\frac{p_i}{\mathfrak{p}_{\xi_i}}\right)=-1\, .\]
This proves that $2,p_1,\dots,p_n$ are all quadratic nonresidues modulo $q_n'$. Thus, $\Phi(q_n')\gg n\log n$, which finishes the proof of the Theorem.
\end{proof}

As a consequence of these constructions, we can derive our result disproving Fiorilli's conjecture (Conjecture \ref{conj: fiorilli R vs NR}), which we stated in the introduction. 
\begin{proof}[Proof of Theorem $\ref{th: fiorilli_disproof}$]
We may assume that $\log \log h(n)\leq n^{n/3}$, since we can otherwise replace $h(n)$ with $\min(h(n),\exp(\exp(n^{n/3})))$.
Let us assume for the sake of contradiction that there exists $L>0$ such that for all sequences $(q_n)_n$ of square-free integers satisfying $\rho(q_n)/\log q_n \to \infty$ as $n \to \infty$, we have  $h (x_{q_n;R,NR})\le L\, \rho(q_n)/\log q_n$ for infinitely many $n$. 
Consider $f(n)=\log h(n)$. Let $(q_n')_n$ be a sequence provided by Theorem~\ref{lqr-nr}. By~\eqref{Chowla-bound}, we have for $n$ sufficiently large
\[ h(x_{q_n';R,NR})\ge h(\Phi(q_n')) > h(n) \, .\]
Moreover, by~\eqref{quotient-size}, we have for infinitely many $n$ \[ h (x_{q_n';R,NR})\le L\, \frac{\rho(q_n')}{\log q_n'} \ll f(n)=\log h(n) \, .\]
This provides a contradiction, which proves the statement for $x_{q;R,NR}$. The statement regarding $x_{q;NR,R}$ is proved similarly using $\Psi$ instead of $\Phi$ in Theorem~\ref{lqr-nr}. 
\end{proof}
Taking $h(n)=\log \log n$, we obtain a sequence contradicting Fiorilli's conjecture. Taking $h(n)=\log \log \log n$ shows that the bound in Theorem~\ref{skewes-qrnr} would not hold without the term $\log \rho(q_n)$. 

\section{A quantitative Kronecker--Weyl theorem with respect to the $1$--Wasserstein metric} \label{sec: kronecker-weyl}

Among distances between probability measures that allow one to quantify the weak convergence, Wasserstein metrics have the advantage of being defined on any \emph{Polish space} (\emph{i.e.} a separable and complete metric space), and of being compatible with Lipschitz maps, which makes them very convenient to keep track of rates of convergence when working with pushforward measures.
Moreover, they proved to be convenient to prove rates of convergence for \enquote{degenerate} measures, meaning measures supported on a submanifold of a given manifold, while distances such as the ball discrepancy or the box discrepancy \enquote{might not see the submanifold}, as they may assign mass zero to it. This situation appears very naturally in the context of the Kronecker--Weyl theorem, where the presence of $\Qq$-linear relations among real numbers $\gamma_1, \dots, \gamma_N$ can prevent the sequence $((e^{in \gamma_1}, \dots, e^{in \gamma_N}))_{n \geqslant 1}$ from equidistributing in the full torus $(\Ss^1)^N$. To obtain quantitative rates of convergence towards the uniform measure on a suitable subtorus, one is naturally led to look for an intrinsic notion of distance between measures, that is: one that does not use the fact that the subtorus arises as a subset of $(\Ss^1)^N$ (apart from the definition of the metric). The Wasserstein metrics $\wass_p$ provide a solution to this problem, because they are only defined in terms of the metric on the subtorus (or, in other words, in terms of test functions defined on the subtorus). Moreover, they satisfy Fourier analytic inequalities similar to the classical Erd\H os--Tur\'an inequality, allowing us to prove a quantitative version of the Kronecker--Weyl theorem.

\subsection{Properties of Wasserstein metrics} 

Let $(M,d)$ be a Polish space and let $p \geqslant 1$. If $\mu$ and~$\nu$ are two Borel probability measures on $M$ (since $M$ is assumed to be Polish, this automatically implies that they are Radon measures), we let
$\Pi(\mu,\nu)$ be the set of probability measures on $M\times M$ with
marginals $\mu$ and~$\nu$ (\emph{i.e.} $\pi(A \times M) = \mu(A)$ and $\pi(M \times A) = \nu(A)$ for all Borel subsets $A \subset M$). Such probability measures on $M \times M$ are also called \emph{couplings} of $\mu$ and $\nu$. Then, the $p$-Wasserstein distance between~$\mu$ and~$\nu$ is defined by
\begin{equation} \label{eq: def wasserstein p}
    \wass_p(\mu,\nu)=\inf_{\pi\in \Pi(\mu,\nu)}\Bigl(
\int_{M \times M}d(x,y)^p\mathrm{d}\pi(x,y)\Bigr)^{1/p}.
\end{equation}
This definition can be interpreted as minimizing the transport cost from a certain distribution of piles of sand on $M$ to another distribution, when the cost of transportation of one unit of sand from $x$ to $y$ is $d(x,y)^p$. We refer to \cite{villani} for a thorough introduction to the vast subject of optimal transport and its history. Let us also mention that the terminology \emph{Monge--Kantorovich distance of order $p$} also appears in the literature to refer to $\wass_p$, and $\wass_1$ is often referred to as the \emph{Kantorovich--Rubinstein distance}.

From now on, all probability measures are implicitly assumed to be Borel. A very important feature of $\wass_p$ is that it is indeed a metric, and that it induces the weak convergence of measures.

\begin{proposition} \label{prop: wass properties} Let $(M,d)$ be a Polish space.
      \begin{enumth}
		\item For all $p \geqslant 1$, $\wass_p$ is a metric on the space $\mathcal P_p(M)$ of probability measures on~$M$ having a finite moment of order $p$ (probability measures $\mu$ on $M$ such that for some (and thus any) $x_0 \in M$, $ \displaystyle\int_{M} d(x,x_0)^p \mathrm{d}\mu(x) < \infty$).
		
		\item Given a sequence $(\mu_n)_{n \geqslant 1}$ in $\mathcal P_p(M)$ and a probability measure $\mu$ on $M$, we have $\wass_p(\mu_n,\mu) \underset{n \to \infty}{\longrightarrow} 0$ if and only if, for all bounded
		continuous functions $f\colon M\to\Cc$, we have
		\[
		\lim_{n\to\infty}\int_M f \mathrm{d}\mu_n=\int_Mf \mathrm{d}\mu.
		\]
		(\emph{i.e.} $\mu_n$ converges weakly to $\mu$) 
		and for some (and thus any) $x_0 \in M$, \[ \lim_{n \to \infty}\int_M d(x, x_0)^p \mathrm{d} \mu_n(x) = \int_M d(x, x_0)^p \mathrm{d} \mu (x) \] (convergence of moments of order $p$).
		
		\item For all $p \geqslant 1$, the metric space $(\mathscr{P}_p(M), \wass_p)$ is again a Polish space. 
		\end{enumth}
\end{proposition}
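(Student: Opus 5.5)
The plan is to prove the three items in order: first that $\wass_p$ is a genuine metric on $\mathcal P_p(M)$, then the characterization of convergence, then completeness and separability. This proposition is classical (it is the content of Villani's treatment of optimal transport), so I would follow the standard route. The main tools are existence of optimal couplings, the gluing lemma for couplings, and Prokhorov's theorem on the Polish space $M$.

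For \emph{(1)}, I would proceed as follows.
\begin{itemize}
\item \emph{Finiteness.} Use the product coupling $\mu\otimes\nu$ and the inequality $d(x,y)^p\le 2^{p-1}\bigl(d(x,x_0)^p+d(x_0,y)^p\bigr)$.
\item \emph{Symmetry.} Exchange coordinates in the coupling.
\item \emph{Existence of an optimal coupling.} $\Pi(\mu,\nu)$ is tight, since its marginals are tight, and it is weakly closed. By Prokhorov it is compact. The cost $d^p$ is lower semicontinuous and bounded below, so the infimum is attained.
\item \emph{Separation.} If $\wass_p(\mu,\nu)=0$, an optimal $\pi$ is supported on the diagonal, hence $\mu=\nu$.
\item \emph{Triangle inequality.} This is the main obstacle. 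Take optimal couplings $\pi_{12}\in\Pi(\mu_1,\mu_2)$ and $\pi_{23}\in\Pi(\mu_2,\mu_3)$. Glue them along $\mu_2$ by disintegrating both with respect to the common marginal $\mu_2$; this is possible because $M$ is Polish. The result is a probability measure on $M^3$, and its $(1,3)$-marginal lies in $\Pi(\mu_1,\mu_3)$. Minkowski's inequality in $L^p$ of this measure then gives $\wass_p(\mu_1,\mu_3)\le\wass_p(\mu_1,\mu_2)+\wass_p(\mu_2,\mu_3)$.
\end{itemize}

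For \emph{(2)}, the two directions go as follows.
\begin{itemize}
\item \emph{Forward direction.} First suppose $\wass_p(\mu_n,\mu)\to0$. Then $\wass_1\le\wass_p$ by Hölder, and the coupling bound $\bigl|\int f\,\mathrm d\mu_n-\int f\,\mathrm d\mu\bigr|\le \mathrm{Lip}(f)\,\wass_1$ gives convergence against bounded Lipschitz $f$. By the Portmanteau theorem this suffices for weak convergence. Convergence of $p$-th moments follows from the triangle inequality for $\wass_p$ applied with $\delta_{x_0}$, since $\wass_p(\nu,\delta_{x_0})^p=\int d(x,x_0)^p\,\mathrm d\nu$.
\item \emph{Converse, setup.} Assume weak convergence together with convergence of moments. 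Then $(\mu_n)$ is tight, and the moment convergence gives uniform integrability of $d(\cdot,x_0)^p$.
\item \emph{Converse, coupling step.} Take optimal couplings $\pi_n\in\Pi(\mu_n,\mu)$. They are tight, so along a subsequence they converge weakly to some $\pi\in\Pi(\mu,\mu)$. The limit $\pi$ is optimal by stability of optimality; one can prove this via cyclical monotonicity, or, more simply, by Skorokhod coupling.
\item \emph{Converse, conclusion.} Since $\pi$ is optimal in $\Pi(\mu,\mu)$, it is the diagonal coupling with cost $0$. Uniform integrability lets us pass to the limit in $\int d^p\,\mathrm d\pi_n$, giving $\int d^p\,\mathrm d\pi_n\to0$ along every subsequence.
\end{itemize}

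For \emph{(3)}, I would argue separability and completeness separately.
\begin{itemize}
\item \emph{Separability.} Finitely supported measures with rational weights on a countable dense subset of $M$ are $\wass_p$-dense. To see this, truncate to a large ball using the finite moment, then quantize to a fine partition; both steps cost little in $\wass_p$.
\item \emph{Completeness.} A $\wass_p$-Cauchy sequence is tight: one shows it is uniformly close to finitely supported measures, using that $M$ is complete so that compact sets are those that are totally bounded. It also has uniformly integrable $p$-th moments. Prokhorov gives a weak limit, and then \emph{(2)}, combined with lower semicontinuity of $\wass_p$ under weak convergence, shows that the sequence converges in $\wass_p$.
\end{itemize}

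The delicate points are the gluing lemma and the tightness of Cauchy sequences. In the paper itself I would simply cite \cite{villani} for these.
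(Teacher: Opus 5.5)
The paper proves this proposition purely by citing Villani, and your sketch is a correct outline of the standard arguments behind those references, so it follows the same route: optimal couplings and gluing for (1), stability of optimal plans plus uniform integrability for (2), and tightness of $\wass_p$-Cauchy sequences plus Prokhorov for (3), exactly as the paper's remark on (3) indicates. One small thing to tidy: in a Polish space that is not locally compact a ball need not be totally bounded, so in the separability step you should truncate to a compact set (using tightness of the single measure $\mu$) rather than to a ball, or else partition into countably many small pieces and discard a tail of small mass.
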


\begin{proof}
    (1) See e.g. \cite[Th. 7.3]{villani}.

(2) This is \cite[Th. 7.12]{villani}. Note that when $d$ is bounded (in particular, when $M$ is compact), this statement says that $\wass_p$ metrizes weak convergence in the whole space $\mathcal P(M)$ of probability measures on $M$ (since the finite moments assumptions are automatically satisfied).

(3) See e.g. \cite[Th. 6.18]{villani_old_new}. This statement relies on the fact that Cauchy sequences in $(\mathscr{P}_p(M), \wass_p)$ are tight, and on Helly's selection theorem.
\end{proof}

The compatibility with Lipschitz maps that we mentioned above is the following elementary observation: 

\begin{proposition} \label{prop: comp lip}
      Let $(M,d)$ and $(N,\delta)$ be Polish spaces, and $p \geqslant 1$. Let~$c\geq 0$ be a
		real number and let~$f\colon M\to N$ be a $c$-Lipschitz map. For
		any probability measures~$\mu$ and~$\nu$ on~$M$, we have
		\[
		\wass_p(f_*\mu,f_*\nu)\leq c\wass_p(\mu,\nu).
		\]
\end{proposition}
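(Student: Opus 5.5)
The idea is to push couplings forward along $f$. Fix probability measures $\mu,\nu$ on $M$; we may assume $\wass_p(\mu,\nu)<\infty$, since otherwise there is nothing to prove. Let $\pi\in\Pi(\mu,\nu)$ be any coupling. Consider the map $F=f\times f\colon M\times M\to N\times N$, $(x,y)\mapsto (f(x),f(y))$. It is continuous, hence Borel, so $F_*\pi$ is a Borel probability measure on $N\times N$.

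The first step is to check that $F_*\pi$ is a coupling of $f_*\mu$ and $f_*\nu$. For a Borel set $B\subset N$ we have
\[
F_*\pi(B\times N)=\pi\bigl(f^{-1}(B)\times M\bigr)=\mu\bigl(f^{-1}(B)\bigr)=f_*\mu(B),
\]
and symmetrically $F_*\pi(N\times B)=f_*\nu(B)$. Hence $F_*\pi\in\Pi(f_*\mu,f_*\nu)$.

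The second step is the change of variables combined with the Lipschitz bound $\delta(f(x),f(y))\le c\,d(x,y)$:
\[
\int_{N\times N}\delta(u,v)^p\,\mathrm{d}F_*\pi(u,v)=\int_{M\times M}\delta(f(x),f(y))^p\,\mathrm{d}\pi(x,y)\le c^p\int_{M\times M}d(x,y)^p\,\mathrm{d}\pi(x,y).
\]
Taking $p$-th roots and using the definition \eqref{eq: def wasserstein p} of $\wass_p$ on $N$ as an infimum over couplings, we get
\[
\wass_p(f_*\mu,f_*\nu)\le c\Bigl(\int d^p\,\mathrm{d}\pi\Bigr)^{1/p}.
\]
Taking the infimum over all $\pi\in\Pi(\mu,\nu)$ gives the claim.

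There is no real obstacle here. The only points needing care are measurability of $F$, which follows from continuity, and the remark that if $\mu,\nu\in\mathcal P_p(M)$ then $f_*\mu,f_*\nu\in\mathcal P_p(N)$. The latter holds because $\delta(f(x),f(x_0))\le c\,d(x,x_0)$, so finite moments of order $p$ are preserved. Consequently both sides of the inequality are finite quantities in the metric spaces of Proposition~\ref{prop: wass properties}.
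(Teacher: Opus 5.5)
Your proof is correct: pushing an arbitrary coupling forward by $f\times f$ and using $\delta(f(x),f(y))\le c\,d(x,y)$ before taking the infimum is the standard argument. The paper gives no proof of its own and simply cites \cite[Th.~1.2 (3)]{ku_wasserstein}; your argument is a self-contained proof of that result.
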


\begin{proof}
See \cite[Th. 1.2 (3)]{ku_wasserstein}
\end{proof}



Finally, the following functional interpretation of $\wass_1$ will be useful throughout this article.

\begin{theorem} \label{th: kantorovich-rubinstein}
    Let $(M,d)$ be a Polish space. For probability measures $\mu$ and $\nu$ on $M$, we have $$\wass_1(\mu,\nu)=\sup_{u}\Bigl|\int_{M}u \mathrm{d}\mu- \int_{M}u \mathrm{d}\nu\Bigr|$$
		where the supremum is over functions~$u\colon M\to \Rr$ which are $1$-Lipschitz and bounded.
\end{theorem}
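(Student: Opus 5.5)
The plan is to prove the Kantorovich--Rubinstein duality in two inequalities. The first, $\sup_u |\int u\,\mathrm{d}\mu - \int u\,\mathrm{d}\nu| \leq \wass_1(\mu,\nu)$, is immediate. The second, the reverse inequality, is the real content. Throughout we may assume $\mu,\nu\in\mathcal P_1(M)$; otherwise $\wass_1(\mu,\nu)=\infty$, and we must check that the supremum is also infinite, which we handle at the end.

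\emph{Easy direction.} Let $u$ be $1$-Lipschitz and bounded, and let $\pi\in\Pi(\mu,\nu)$ be any coupling. Since $\pi$ has marginals $\mu$ and $\nu$, and $u$ is bounded so that everything is integrable,
\[
\int_M u\,\mathrm{d}\mu-\int_M u\,\mathrm{d}\nu=\int_{M\times M}\bigl(u(x)-u(y)\bigr)\,\mathrm{d}\pi(x,y)\leq \int_{M\times M} d(x,y)\,\mathrm{d}\pi(x,y).
\]
Replacing $u$ by $-u$ handles the absolute value, and taking the infimum over $\pi$ gives this direction.

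\emph{Hard direction.} Here I would invoke the general Kantorovich duality for a lower semicontinuous cost $c\geq 0$ on a Polish space, \cite[Th. 1.3]{villani}:
\[
\inf_{\pi\in\Pi(\mu,\nu)}\int c\,\mathrm{d}\pi=\sup\Bigl\{\int\varphi\,\mathrm{d}\mu+\int\psi\,\mathrm{d}\nu\Bigr\},
\]
where the supremum runs over bounded continuous $\varphi,\psi$ with $\varphi(x)+\psi(y)\leq c(x,y)$. Applying it with $c=d$, we then reduce to a single Lipschitz potential by $c$-convexification. Given an admissible pair $(\varphi,\psi)$, set
\[
u(x)=\inf_{y}\bigl(d(x,y)-\psi(y)\bigr).
\]
This $u$ is an infimum of $1$-Lipschitz functions, hence $1$-Lipschitz. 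It satisfies $u\geq\varphi$, and taking $y=x$ gives $u(x)\leq -\psi(x)$. Thus $\int\varphi\,\mathrm{d}\mu+\int\psi\,\mathrm{d}\nu\leq\int u\,\mathrm{d}\mu-\int u\,\mathrm{d}\nu$.

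Boundedness of $u$ also follows: $u\geq\varphi\geq-\|\varphi\|_\infty$ and $u\leq-\psi\leq\|\psi\|_\infty$. So $u$ is an admissible test function in the statement, and the reverse inequality follows.

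\emph{Main obstacle.} The main obstacle is the duality theorem itself, which I would cite rather than reprove. Its proof goes through a minimax/Hahn--Banach argument on compact spaces, followed by tightness (Prokhorov) to pass to Polish spaces, and I would simply point to \cite[Th. 1.3 and Th. 1.14]{villani}. Note that \cite[Th. 1.14]{villani} is essentially exactly our statement.

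\emph{Infinite-moment case.} If $\wass_1=\infty$, apply the duality with the truncated costs $\min(d,n)$ and let $n\to\infty$ by monotone convergence of the optimal costs, which holds by lower semicontinuity and tightness of couplings. The truncated potentials are still $1$-Lipschitz and bounded, so the supremum is also $\infty$.
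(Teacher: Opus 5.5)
Your proof is correct, and it does more than the paper. The paper gives no argument of its own and simply quotes the Kantorovich--Rubinstein theorem \cite[Th. 1.14]{villani}. You instead derive that theorem from the general Kantorovich duality \cite[Th. 1.3]{villani} through the $c$-transform $u(x)=\inf_y\bigl(d(x,y)-\psi(y)\bigr)$, which is essentially how it is proved in the first place. Unpacking it this way has a small benefit. The statement restricts to \emph{bounded} test functions, whereas the usual formulation takes the supremum over all integrable $1$-Lipschitz functions. In your argument boundedness comes for free from the sandwich $\varphi\le u\le -\psi$, so no separate truncation step is needed.

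One side claim is false and should be corrected. It is not true that $\wass_1(\mu,\nu)=\infty$ whenever $\mu,\nu$ are not both in $\mathcal P_1(M)$. That holds when exactly one of them has infinite first moment. If both do, the distance can be finite (e.g.\ $\wass_1(\mu,\mu)=0$), and for such pairs your last paragraph would be trying to show the supremum is infinite. The error is harmless, because nothing in your main argument uses finite moments. The duality for a nonnegative lower semicontinuous cost holds for arbitrary Borel probability measures, with both sides in $[0,\infty]$, and bounded $u$ are integrable against any probability measure. So you can simply drop the reduction to $\mathcal P_1(M)$. Your truncation argument, read as $\sup_u\ge\lim_{n}\inf_{\pi\in\Pi(\mu,\nu)}\int\min(d,n)\,\mathrm{d}\pi=\wass_1(\mu,\nu)$, is valid without that reduction in any case.
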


\begin{proof}
     This is called the Kantorovich--Rubinstein duality theorem, see e.g. \cite[Th. 1.14]{villani}.
\end{proof}




\subsection{Quantitative Kronecker--Weyl} 

We endow $\Ss^1$ with the usual arc-length metric $\ell$, which is defined for all $(\gamma, \theta) \in (-\pi, \pi]^2$ by $\ell(e^{i \gamma}, e^{i \theta}) = \min\{ |\gamma - \theta|, 2\pi - |\gamma - \theta| \}$. Then, for all $N \geqslant 1$, we equip $(\Ss^1)^N$ with the metric $\rho$ defined by
$$
\rho(z,z') = \left( \sum_{j = 1}^{N} \ell(z_j, z'_j)^2 \right)^{1/2}
$$
for all $z = (z_1, \dots, z_N)$ and $z' = (z'_1, \dots, z'_N)$ in $(\Ss^1)^N$. The 1-Wasserstein distance between two probability measures on $(\Ss^1)^N$ is then defined with respect to this metric $\rho$ as in \eqref{eq: def wasserstein p}.

For any closed subgroup $\Gamma$ of $(\Ss^1)^N$, we identify its Haar probability measure $\lambda$ (which is a Borel measure on the topological space $\Gamma$) with the pushforward measure $\iota_* \lambda$ via the canonical embedding $\iota \colon \Gamma \to (\Ss^1)^N$ (which is a Borel measure on $(\Ss^1)^N$). This type of identification does not change the value of Wasserstein distances, thanks to \cite[Th. 1.2 (4)]{ku_wasserstein}.

We recall that the dual of $(\Ss^1)^N$ is isomorphic to $\Zz^N$. Concretely, this means that each character of $(\Ss^1)^N$ is of the form
$$
(z_1, \dots, z_N) \mapsto z_1^{m_1} \cdots z_N^{m_N}
$$
for a unique $m = (m_1, \dots, m_N) \in \Zz^N$. Moreover, if $\mu$ is a Borel probability measure on $(\Ss^1)^N$, its Fourier coefficients are defined for all $m \in \Zz^N$ as 
$$
\widehat{\mu}(m) := \int_{(\Ss^1)^N} z^m \mathrm{d}\mu(z),
$$
 where $z^m$ denotes $z_1^{m_1}\dots z_N^{m_N}$.

\begin{theorem}\label{Q-KW}
	Let $N \geqslant 1$, $\gamma := (\gamma_1, \dots, \gamma_N) \in \Rr^N$, and $z := (e^{i \gamma_1}, \dots, e^{i \gamma_N}) \in (\Ss^1)^N$. We denote by $\Gamma_c$ the closure of the subgroup of $(\Ss^1)^N$ generated by the set $\{ (e^{ix \gamma_1}, \dots, e^{ix \gamma_N}), \ x \in~\Rr\}$, and by $\lambda_c$ its Haar probability measure.  

Let $x_0 > 0$. For all $X > x_0$, denote by $\nu_X$ the probability measure on $(\Ss^1)^N$ defined by
$$
 \int_{(\Ss^1)^N} h \mathrm{d} \nu_X = \frac{1}{X-x_0} \int_{x_0}^{X} h(e^{ix \gamma_1}, \dots, e^{ix \gamma_N}) \mathrm{d} x, \quad \text{for all } h \in \mathcal C((\Ss^1)^N, \Cc).
$$
Then for all $X > x_0$ and all $H \geqslant 1$, 
$$
	\wass_1(\nu_X, \lambda_c) \leqslant \frac{4 \sqrt{3} \sqrt{N}}{H} + \frac{2}{X-x_0} \left(\sum_{\substack{1 \leqslant \|m \|_{\infty} \leqslant H \\ \langle m , \gamma \rangle \neq 0}} \frac{1}{\|m \|_2^2 \langle m, \gamma\rangle^2 }  \right)^{\frac12}\cdot
$$
\end{theorem}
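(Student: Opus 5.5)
The natural tool is a Fourier-analytic smoothing inequality for $\wass_1$ on the torus, of Bobkov--Ledoux type. The form I would aim for (or cite, in the spirit of \cite{borda_cuenin}) is: for any two probability measures $\mu,\nu$ on $(\Ss^1)^N$ and any $H\geq 1$,
\[
\wass_1(\mu,\nu)\leqslant \frac{c\sqrt N}{H}+\Bigl(\sum_{1\leqslant \|m\|_\infty\leqslant H}\frac{|\widehat\mu(m)-\widehat\nu(m)|^2}{\|m\|_2^2}\Bigr)^{1/2}.
\]

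To prove it, I would convolve both measures with a product Fejér-type kernel $K_H$ of bandwidth $H$ in each coordinate. The triangle inequality then gives
\[
\wass_1(\mu,\nu)\leqslant \wass_1(\mu,\mu*K_H)+\wass_1(\mu*K_H,\nu*K_H)+\wass_1(\nu*K_H,\nu).
\]
The outer two terms are each bounded by $\int\rho(0,z)\,K_H(z)\,\mathrm dz$, using the coupling $(x,x+z)$. By Cauchy--Schwarz this is at most $\sqrt{N}$ times the one-dimensional second moment of the kernel. That moment is $O(1/H)$ for a suitable kernel, e.g.\ Jackson's kernel or the square of Fejér's. This produces a term $2\sqrt{3}\sqrt N/H$ each, once the constant is tracked, giving the claimed total $4\sqrt3\sqrt N/H$.

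For the middle term, I would use Kantorovich--Rubinstein (Theorem~\ref{th: kantorovich-rubinstein}). For a $1$-Lipschitz $u$, expand
\[
\int u\,\mathrm d(\mu*K_H-\nu*K_H)=\sum_{m\neq 0}\widehat{K_H}(m)\,\overline{\widehat u(m)}\,\bigl(\widehat\mu(m)-\widehat\nu(m)\bigr),
\]
which is supported on $\|m\|_\infty\leqslant H$ and has $|\widehat{K_H}|\leqslant 1$. Applying Cauchy--Schwarz with weights $\|m\|_2^{\pm 1}$, it remains to bound $\sum_m \|m\|_2^2|\widehat u(m)|^2=\|\nabla u\|_{L^2}^2\leqslant 1$ by Parseval, since $u$ is $1$-Lipschitz for $\rho$. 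This yields the displayed inequality with constant $1$.

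Next I apply it with $\mu=\nu_X$ and $\nu=\lambda_c$. The Fourier coefficients are explicit: $\widehat{\nu_X}(m)=\frac{1}{X-x_0}\int_{x_0}^X e^{ix\langle m,\gamma\rangle}\,\mathrm dx$. If $\langle m,\gamma\rangle=0$, this equals $1$. Otherwise its modulus is at most $\frac{2}{(X-x_0)|\langle m,\gamma\rangle|}$.

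For the Haar measure, $\widehat{\lambda_c}(m)=1$ if the character $z^m$ is trivial on $\Gamma_c$, and $0$ otherwise. Since $\Gamma_c$ is the closure of $\{(e^{ix\gamma_j})_j\}$, the character is trivial exactly when $e^{ix\langle m,\gamma\rangle}=1$ for all $x$, i.e.\ when $\langle m,\gamma\rangle=0$.

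Hence the difference of Fourier coefficients vanishes when $\langle m,\gamma\rangle=0$, and is bounded by $\frac{2}{(X-x_0)|\langle m,\gamma\rangle|}$ otherwise. Substituting into the smoothing inequality gives exactly the second term of the statement.

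The main obstacle is the smoothing inequality with explicit constants, specifically choosing a kernel that is nonnegative and has $\widehat{K_H}$ supported in $\|m\|_\infty\leqslant H$, while having a first moment small enough to give the constant $2\sqrt3$ per side. I would try the one-dimensional kernel $|F_{H/2}|^2$ normalized, with $F$ the Fejér kernel, and compute $\int \ell(1,z)^2K(z)\,\mathrm dz\leqslant 3/H^2$, then tensorize. If the constants do not come out, I would simply cite Bobkov--Ledoux's inequality as stated in \cite{borda_cuenin}.
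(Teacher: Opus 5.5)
Your proposal is correct and follows the paper's proof: the paper cites the Bobkov--Ledoux inequality in the form \cite[Th. 1.2 (7)]{ku_wasserstein}, with exactly the term $4\sqrt3\sqrt N/H$, and then computes $\widehat{\nu}_X(m)$ and $\widehat{\lambda}_c(m)$ just as you do, so your kernel argument is the standard self-contained proof of that cited inequality. The only slip is in the constants: for the normalized kernel $|F_{H/2}|^2$ the one-dimensional second moment is about $3/(H/2)^2=12/H^2$, not $3/H^2$, and it is this value that gives the $2\sqrt3\sqrt N/H$ per side that you (correctly) claim.
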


\begin{proof}
The first step is to apply the Bobkov--Ledoux inequality \cite[Eq. (1.6)]{bl_truncated} (also proved independently by Borda \cite[Prop. 3]{borda_bernoulli} with different constants). More precisely, we use the slightly improved form \cite[Th. 1.2 (7)]{ku_wasserstein}, which was indicated as a remark by Bobkov and Ledoux. This gives the following upper bound for all $H \geqslant 1$:
	$$
	\wass_1(\nu_X, \lambda_c) \leqslant \frac{4 \sqrt{3} \sqrt{N}}{H} + \left(\sum_{1 \leqslant \|m \|_\infty \leqslant H} \frac{|\widehat{\nu}_X(m) - \widehat{\lambda}_c(m)|^2}{\|m \|_{2}^2}\right)^{\frac12},
	$$
	where the sum is indexed by vectors $m$ in $\Zz^N$ and $\| m \|_2$ denotes their Euclidean norm.

Now, the Fourier coefficients $\widehat{\nu}_X(m)$ are computed as follows:
\[
	\widehat{\nu}_X(m) = \int_{(\Ss^1)^N} z^m \mathrm{d} \nu_X(z) = \frac{1}{X-x_0}\int_{x_0}^X (e^{i x\gamma_1})^{m_1} \cdots (e^{i x \gamma_N})^{m_N} \mathrm{d} x  = \frac{1}{X-x_0} \int_{x_0}^{X} e^{i \langle m, \gamma \rangle x} \mathrm{d} x.
\]
Thus:
$$
	\widehat{\nu}_X(m) = \begin{cases} 1 \text{ if } \langle m, \gamma \rangle = 0 \\ \frac{e^{i\langle m,\gamma \rangle X } -e^{i\langle m,\gamma \rangle x_0 }}{i  \langle m , \gamma \rangle (X-x_0)} \text{ otherwise}.
		\end{cases}
$$
In particular, in the second case we have the upper bound
\begin{equation} \label{eq: upper bound inverse lin comb}
|\widehat{\nu}_X(m)| \leqslant \frac{2}{|\langle m, \gamma \rangle| (X-x_0)} .
\end{equation} 

Besides, if $\langle m, \gamma \rangle = 0$, the character of $(\Ss^1)^N$ corresponding to $m$ is trivial on elements of the form $(e^{ix \gamma_1}, \dots, e^{ix \gamma_N})$, and therefore is trivial on $\Gamma_c$ by continuity. Conversely, if   $\langle m, \gamma \rangle \neq 0$, then the character corresponding to $m$ is non-trivial on $\Gamma_c$. Therefore, the usual properties of the Haar measure imply that
$$
	\widehat{\lambda}_c(m) = \begin{cases} 1 \text{ if } \langle m, \gamma \rangle = 0 \\ 0 \text{ otherwise}.
\end{cases}
$$
Thus,
$$
	\wass_1(\nu_X, \lambda_c) \leqslant \frac{4 \sqrt{3} \sqrt{N}}{H} + \left(\sum_{\substack{1 \leqslant \|m \|_{\infty} \leqslant H \\ \langle m , \gamma \rangle \neq 0}} \frac{|\widehat{\nu}_X(m)|^2}{\|m \|_2^2}\right)^{\frac12},
$$
and using the upper bound \eqref{eq: upper bound inverse lin comb}, we obtain the conclusion.
\end{proof}

\begin{remark}
(1) The index $c$ in the notation $\Gamma_c$ stands for \enquote{continuous}. Indeed, there is also a discrete version of Theorem \ref{Q-KW}, which we will use in the sequel \cite{BHU-II} to this paper. 

(2) In a recent work \cite[Th. 1]{borda_cuenin}, Borda and Cuenin extended the Bobkov--Ledoux inequality to $\wass_p$ for any $p \geqslant 1$. However, for $p >1$, their inequality requires that at least one of the measures is greater than $c \mathrm{Vol}$ for some positive constant $c$. Therefore, it only gives a quantitative Kronecker--Weyl theorem when the $\gamma_j$ are linearly independent over $\Qq$. Of course, since we will eventually make this assumption in the context of prime number races, this is not really a limitation. Nevertheless, there is another reason why we did not carry on with a general $p \geqslant 1$. In fact, we rely on the Kantorovich--Rubinstein duality theorem (Theorem \ref{th: kantorovich-rubinstein}) in several parts of this paper, and when $p$ is greater than $1$, there is no analogue of this theorem. A partial duality principle is stated in \cite[Lemma 13]{borda_cuenin}, but it only gives an inequality between $\wass_p$ and some dual Sobolev norms, and it seems that there is no hope to obtain an inequality in the other direction in dimension greater than $1$ (see e.g. \cite[Remark 2]{graham}).
\end{remark}

\section{An effective linear independence conjecture for Dirichlet $L$-functions}
\label{sec: lin-indep}

The linear independence conjecture (LI) that we mentioned in the introduction dates back to the 1930s. It appears in the work \cite{Wintner} of Wintner, who shed light on the relationship between this conjecture (applied to the imaginary parts of the zeros of $\zeta$) and the distribution of the error term in the Prime Number Theorem. As such, it is related to the summatory function $\psi(x)$ of the von Mangoldt function $\Lambda(n)$. The LI conjecture also appears in works related to the summatory function $M(x)$ of the Möbius function $\mu(n)$. In particular, Ingham proved in \cite{Ingham42} that LI implies $\liminf_{x \to \infty} \frac{M(x)}{\sqrt{x}} = - \infty \text{ and } \limsup_{x \to \infty} \frac{M(x)}{\sqrt{x}} =  \infty$, a statement that is much stronger than Odlyzko and te Riele's disproof of Mertens' conjecture \cite{disproof_mertens}, which was based on effective lower bounds of linear combinations of small zeros of $\zeta$. However, proving the LI conjecture seems completely out of reach, and the best result in this direction, due to Li and Radziwi\l\l, states that the proportion of zeros of $\zeta$ lying in a given vertical arithmetic progression is less than $\frac{2}{3}$ in large intervals, see \cite[Theorem 4]{radziwill_li}.

The LI conjecture can be formulated similarly for the imaginary parts of the zeros of Dirichlet $L$-functions, as in the paper \cite{hooley_bdh7} by Hooley, who appears to be the first to do so. It states that, for $q\ge 3$, the multi-set of the positive imaginary parts of the zeros of Dirichlet $L$-functions associated with characters modulo $q$ is linearly independent over $\Qq$. This version of LI attracted a lot of interest since the article \cite{RS94} of Rubinstein and Sarnak on Chebyshev's bias in prime number races. 

Since the 1980's, there has been interest in \emph{effective} linear independence statements, meaning statements that quantify how far from zero the non trivial linear combinations of imaginary parts of the zeros of $\zeta$ are. A conjecture in this direction is attributed to Monach and Montgomery \cite[p. 483]{MV}. It seems to us that the best available written reference is Lamzouri's recent article \cite[Conj. 1.1.]{lamzouri_eli}, where an effective form of the LI conjecture is stated, and a heuristic argument supporting it is written in full detail. This conjecture was also used by Ng in \cite{Ng-ELI} to obtain very general $\Omega_{\pm}$ results.
Inspired by this, we now formulate the following conjecture about imaginary parts of zeros of Dirichlet $L$-functions.

\begin{conjecture}[$\mathrm{ELI}_A(\mathcal{X}_q)$] \label{conj: ELI_A}
 Let $q \in \Zz_{\geq 1}$, $A>1$, and let $\mathcal X_q$ be a set of Dirichlet characters modulo $q$. We let $(\gamma_n)_{n \geqslant 1}$ be an enumeration in non-decreasing order of the positive imaginary parts of zeros, counted with multiplicities, of Dirichlet $L$-functions associated with the characters $\chi \in \mathcal X_q$. For all $T$ sufficiently large, one has $$\left|\sum_{j=1}^{N(T)} m_j \gamma_j\right| \gg_A N(T)^{-N(T)^A}$$ for any integers $m_1, \dots, m_{N(T)}$, not all zero, satisfying $|m_j| \leq N(T)$, where $N(T)$ is the number of such imaginary parts up to height $T$, \emph{i.e.} $N(T) = \sum_{\gamma_n \leq T} 1$.
\end{conjecture}

This conjecture extends Lamzouri's formulation to Dirichlet $L$-functions. Furthermore, when applied to the zeros of the Riemann zeta function, it is actually weaker than Lamzouri's hypothesis, which essentially amounts to ELI$_A$ for \textit{every} $A > 1$. 

\begin{remark}
\begin{enumerate}
    \item We note that if the $\gamma_j$ were to be replaced by logarithms of linearly independent algebraic numbers, the heuristic behind this conjecture is the one leading to the Lang-Waldschmidt conjecture (see \cite[p.212]{lang}), based on the pigeonhole principle. The best unconditional lower bound in this case is given by Baker's celebrated theorem on linear forms in logarithms, which will be used in the context of function fields in a follow-up to this paper.\\
    
    \item The form of our conjecture is reminiscent of the following. When $\xi$ is a transcendental number, it is possible to provide a notion of measure of transcendency of $\xi$ when there is a lower bound of the form $|P(\xi)| \gg H^{-f(n)}$ for any non-zero $P \in \Zz[X]$ with coefficients bounded in absolute value by $H$ and of degree at most $n$, and $f$ is some positive function. The value of $f$ at $n=1$ is related to the classical irrationality measure of an irrational number. More generally, this leads to the Mahler classification of transcendental numbers as $S, T$ and $U$-numbers and to the Wirsing conjecture in diophantine theory, see \cite{Bugeaud} for more about those kinds of exponents. When $P$ is of degree $n$, this amounts to a lower bound on integral linear combinations of $1, \xi, \dots, \xi^n$, with coefficients bounded in absolute value by $H$. In our case, we consider linear combinations of the positive imaginary parts of non-trivial zeros of Dirichlet $L$-functions instead of powers of $\xi$ and specialize to $H=n+1$.\\
    
    \item Another variant of the linear independence hypothesis, called QLI (for Quantitative Linear Independence), also appeared recently in \cite{SKL} to obtain joint distribution of primes in short intervals.
\end{enumerate}
\end{remark}

\section{Prime number races}
\label{sec: prime-races}
Let $q\ge 3$ and $t_q \colon (\Zz/q\Zz)^\times\to \Rr$ be a non-zero map satisfying $\langle t_q , \chi_0\rangle=0$, where $\langle \cdot , \cdot \rangle$ is the usual inner product on the space of complex valued maps defined on the group $(\Zz/q\Zz)^\times$, defined by $$\langle f, g \rangle := \frac{1}{\varphi(q)} \sum_{a \in (\Zz/q\Zz)^{\times}} f(a) \overline{g(a)},$$ and $\chi_0$ is the trivial character of this group. In the applications to prime number races we have in mind, $t_q$ will be a (normalized) difference of two indicator functions, but we keep it general for the moment. Recall that the Fourier transform of $t_q$ is defined as follows: 
\[\begin{array}{ccccc}
  \widehat{t_q} & : &  \widehat{(\Zz / q \Zz)^{\times}}  & \to & \Cc  \\
     & & \chi & \mapsto & \langle t_q, \chi \rangle,
\end{array}\] so that we have the Fourier inversion formula $t_q = \sum_{\chi} \hat{t_q}(\chi) \chi$. An important quantity in our estimates is the $L^1$ norm of $\widehat{t_q}$:  \begin{equation}\label{Littlewood-norm} \lambda(t_q):=\sum_{\chi} \bigl| \langle t_q,\chi \rangle \bigr|\, .\end{equation}
Moreover, we will denote by $\mathrm{ELI}_A(t_q)$ the conjecture $\mathrm{ELI}_A(\mathcal{X}_q)$ (Conjecture \ref{conj: ELI_A}) applied to the set of characters $\mathcal{X}_q = \mathrm{supp}(\widehat{t_q})$.

Define the prime counting functions \begin{align*}
    \pi(x;t_q):=\sum_{\substack{p\le x\\ (p,q)=1}}t_q(p)\, ,\ \ 
    \theta(x;t_q):=\sum_{\substack{p\le x\\ (p,q)=1}} t_q(p)\log p\, ,\ \ 
    \psi(x;t_q):=\sum_{\substack{n\le x\\ (n,q)=1}} t_q(n) \Lambda(n)\, ,
\end{align*}
where $\Lambda$ is the von Mangoldt function, and $\mathcal{P}(t_q):=\{\, x\ge 2\, :\, \pi(x;t_q) >0\} \, .$
Following the work of Rubinstein and Sarnak~\cite{RS94}, one can prove that, under the Generalized Riemann Hypothesis ($\mathrm{GRH}$) and the Linear Independence hypothesis (LI), the logarithmic density of $\mathcal{P}(t_q)$, \textit{i.e.} the quantity
\begin{equation}
\delta_{t_q} := \lim_{X \to \infty} \frac{1}{X} \int_{\log 2}^X\mathds{1}_{(0,\infty)}\bigl(\pi\bigl(e^y;t_q \bigr)\bigr)  \mathrm{d} y,
\end{equation}
exists and is strictly between $0$ and $1$. The objective of this section is to study the rate of convergence to this limit and deduce consequences on generalized Skewes' numbers. Specifically, we aim to establish an explicit upper bound for
\begin{equation}
\left| \delta_{t_q} - \frac{1}{X-\log 2} \int_{\log 2}^{X} \mathds{1}_{(0,\infty)}\bigl(\pi\bigl(e^y;t_q \bigr)\bigr) \mathrm{d}y \right|.
\end{equation}
For our applications, it is essential to determine how this bound depends on the modulus $q$. Since the analysis in~\cite{RS94} keeps the dependence on $q$ implicit, their results cannot be used directly in our setting. We therefore revisit their approach and adapt it to provide bounds with explicit dependence on $q$ and the function $t_q$ throughout the argument.\\
An important particular case is the race between quadratic residues and nonresidues. In what follows, let us denote \[ \ell_q:= (\rho(q)-1)\mathds{1}_{R_q}-\mathds{1}_{NR_q},\]
where $\rho(q)$ denotes the number of square roots of $1$ modulo $q$. We note that for all $x\ge2$, $\pi(x;\ell_q)>0$ if and only if \[\frac{1}{|R_q|} \sum_{a\in R_q} \pi(x;q,a) > \frac{1}{|NR_q|} \sum_{b\in NR_q} \pi(x;q,b) .\]

\subsection{The Riemann-von Mangoldt formula and applications}
In this subsection we recall some basic facts about zeros of Dirichlet $L$-functions. The Riemann-von Mangoldt formula states that if $\chi$ is a Dirichlet character modulo $q$, then the number $N(T,\chi):=\# \{\rho_\chi\, :\, 0<\Re(\rho_\chi)<1\text{ and } 0 <\Im(\rho_\chi) \le T;\ L(\rho_\chi,\chi)=0 \}$, where each zero is counted with multiplicity, satisfies: \begin{equation}\label{R-vM1}
    N(T,\chi)=\frac{T}{2\pi}\log \left(\frac{q_\chi T}{2\pi e}\right)+O(\log q_\chi T),
\end{equation} 
where $q_\chi$ is the conductor of $\chi$. Define
\begin{equation*}
    N(T,t_q) := \# \left\{ 0 < \gamma \leq T\, :\,   \begin{aligned}
  & \gamma \text{ is the imaginary part of a non-trivial zero of a Dirichlet} \\ & L\text{-function} \text{ associated with a character $\chi \in \supp(\widehat{t_q})$} 
  \end{aligned}  \right\}
\end{equation*}
where each zero is counted with its multiplicities for each corresponding $L$-function. We obtain \[N(T, t_q)=\sum_{\chi \in \supp(\widehat{t_q})} N(T,\chi).\] 

Therefore, if we define $k(t_q)$ as the unique real number satisfying
\begin{equation} \label{eq: def k(t_q)}
 \log k(t_q) = \frac{1}{|\supp(\widehat{t_q})|} \sum_{\chi \in \supp(\widehat{t_q})} \log q_{\chi},
\end{equation}
we have the following result.
\begin{lemma}\label{lem: N(T,t_q)-expression} Let $q \geq 3$. Then for $T\geqslant 1$:
\begin{equation}\label{eq: N(T,t_q) estimate}
N(T, t_q)=\frac{|\supp(\widehat{t_q})|T}{2\pi}\log\left(\frac{k(t_q)T}{2\pi e}\right)+O(|\supp(\widehat{t_q})| \log (k(t_q)T)).
\end{equation}
\end{lemma}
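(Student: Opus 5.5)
We sum the Riemann--von Mangoldt formula \eqref{R-vM1} over the characters $\chi \in \supp(\widehat{t_q})$, using the identity $N(T,t_q)=\sum_{\chi \in \supp(\widehat{t_q})} N(T,\chi)$. Write $S:=|\supp(\widehat{t_q})|$. For each $\chi$ in the support we have
\[
N(T,\chi)=\frac{T}{2\pi}\log\Bigl(\frac{T}{2\pi e}\Bigr)+\frac{T}{2\pi}\log q_\chi+O(\log (q_\chi T)).
\]
Summing over the $S$ characters, the first term contributes $\frac{ST}{2\pi}\log\bigl(\frac{T}{2\pi e}\bigr)$. The second term contributes $\frac{T}{2\pi}\sum_\chi \log q_\chi=\frac{ST}{2\pi}\log k(t_q)$, by the definition \eqref{eq: def k(t_q)} of $k(t_q)$. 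Together these give exactly the main term $\frac{ST}{2\pi}\log\bigl(\frac{k(t_q)T}{2\pi e}\bigr)$.

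For the error term we need the implied constant in \eqref{R-vM1} to be absolute, uniform in $\chi$ and in $T\geq 1$. This is the standard form (e.g. Davenport, Ch. 16, or Montgomery--Vaughan). For $T$ near $1$ the formula is still valid, since the main term is then $O(\log q_\chi)$ and so is $N(T,\chi)$; if needed we replace $O(\log q_\chi T)$ by $O(\log(q_\chi T+2))$, which is harmless here because $q\geq 3$ keeps the logarithms bounded below. Summing gives
\[
\sum_\chi O(\log q_\chi+\log T)=O\Bigl(S\log T+\sum_\chi\log q_\chi\Bigr)=O\bigl(S\log T+S\log k(t_q)\bigr)=O\bigl(S\log(k(t_q)T)\bigr),
\]
where we used the definition of $k(t_q)$ once more.

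One point needs care. The trivial character is excluded from the support since $\langle t_q,\chi_0\rangle=0$. However, characters of conductor $1$ could arise only from $\chi_0$, so every $q_\chi$ in the support is at least $3$. Hence $\log q_\chi\geq \log 3$, and so $\log(k(t_q)T)\geq \log 3>0$ for $T\geq1$. This ensures the error term dominates the contributions that are $O(1)$ for each character, and no step presents any real obstacle beyond this uniformity check.
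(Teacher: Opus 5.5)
Your proposal is correct and follows the paper's route: the paper obtains the lemma by summing the Riemann--von Mangoldt formula \eqref{R-vM1} over $\chi \in \supp(\widehat{t_q})$. The identity $\sum_\chi \log q_\chi = |\supp(\widehat{t_q})|\log k(t_q)$ then yields both the main term and the $O(|\supp(\widehat{t_q})|\log(k(t_q)T))$ error. Your extra checks are valid details the paper leaves implicit: uniformity of the implied constant for $T$ near $1$, and $q_\chi \geq 3$ for non-principal characters, which keeps $\log(k(t_q)T)$ bounded below.
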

We now apply~\eqref{R-vM1} to estimate sums over zeros.
\begin{corollary}\label{sums-zeros}
Assume $\mathrm{GRH}$ for Dirichlet $L$-functions modulo $q$. We have the following:
\begin{enumth}
        \item \begin{equation*}
    S_1(t_q):= \left| \sum_{\chi \in \supp(\hat{t_q})} \langle t_q,\chi\rangle \sum_{\gamma_\chi} \frac{1}{(\tfrac{1}{2}+i\gamma_\chi)(\tfrac{3}{2}+i\gamma_\chi)}\right| \ll \lambda(t_q) \log q
\end{equation*}
\item \begin{equation*}
    S_2(t_q):= \left|\sum_{\chi \in \supp(\hat{t_q})} \langle t_q,\chi\rangle \sum_{\gamma_\chi} \frac{1}{\tfrac{1}{4}+\gamma_\chi ^2}\right|\ll \lambda(t_q) \log q.
    \end{equation*}
\item  Let $\chi,\lambda$ be Dirichlet characters modulo $q$. Then, uniformly for $T,Y>3$ 
\begin{equation*}
 \sum_{|\gamma_\chi|, |\gamma_\lambda|\ge T}\frac{1}{|\gamma_\chi \gamma_\lambda|} \min\left(Y,\frac{1}{|\gamma_\chi-\gamma_\lambda|}\right) \ll (\log q)^2 \left(Y\frac{(\log T)^2}{T}+\frac{(\log T)^3}{T}\right).
\end{equation*}
\end{enumth} 
\end{corollary}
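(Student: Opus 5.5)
The plan is to derive all three parts from the Riemann--von Mangoldt formula \eqref{R-vM1}, applied character by character through partial summation. Under $\mathrm{GRH}$ every nontrivial zero of $L(s,\chi)$ has the form $\tfrac12+i\gamma_\chi$. One caveat first: \eqref{R-vM1} is stated for primitive characters. For an imprimitive $\chi$ we replace it by the primitive character inducing it; this changes only the trivial zeros, so the $\gamma_\chi$ are unchanged, and the conductor satisfies $q_\chi\le q$. The basic input is a local density bound. From \eqref{R-vM1} we get
\[N(t+1,\chi)-N(t,\chi)\ll \log(q(|t|+2))\]
uniformly in $t$, and the same holds for negative ordinates by symmetry ($\bar\chi$ has conjugate zeros).

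For parts (1) and (2), the triangle inequality gives
\[S_i(t_q)\le \sum_{\chi}|\langle t_q,\chi\rangle|\sum_{\gamma_\chi}\frac{1}{\tfrac14+\gamma_\chi^2},\]
because $|(\tfrac12+i\gamma)(\tfrac32+i\gamma)|\ge |\tfrac12+i\gamma|^2=\tfrac14+\gamma^2$. It therefore suffices to show that $\sum_{\gamma_\chi}(\tfrac14+\gamma_\chi^2)^{-1}\ll\log q$ uniformly in $\chi$. We split the ordinates into unit intervals $[n,n+1)$, each containing $\ll \log(q(n+2))$ zeros, so the sum is
\[\ll \sum_{n\ge0}\frac{\log q+\log(n+2)}{\tfrac14+n^2}\ll \log q.\]
Alternatively, the classical identity $\sum_\gamma (\tfrac14+\gamma^2)^{-1}=\Re\,\tfrac{L'}{L}$-type expression (the sum over zeros equals $\log q_\chi$ plus bounded terms under GRH) gives the same bound, but the counting argument is more self-contained. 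Multiplying by $\sum_\chi|\langle t_q,\chi\rangle|=\lambda(t_q)$ finishes both parts.

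Part (3) is the main step. Write $\gamma=\gamma_\chi$ and $\gamma'=\gamma_\lambda$. By symmetry we may restrict to the pairs with $|\gamma'|\ge|\gamma|$, at the cost of a factor $2$; then $\frac{1}{|\gamma\gamma'|}\le \frac1{\gamma^2}$. Fix $\gamma$ with $|\gamma|\ge T$ and sum over $\gamma'$. The zeros with $|\gamma-\gamma'|\le 1$ number $\ll\log(q|\gamma|)$ and contribute at most $Y$ each. For the rest we decompose dyadically or by unit intervals: the $\gamma'$ with $k\le|\gamma'-\gamma|<k+1$, $k\ge1$, number $\ll\log(q(|\gamma|+k))$ and contribute at most $1/k$ each.

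One difficulty appears here. Summing over $k$ up to infinity diverges logarithmically, so we must not discard the factor $1/|\gamma'|$. The fix is to keep it:
\[\sum_{|\gamma'|\ge|\gamma|,\ |\gamma-\gamma'|>1}\frac{1}{|\gamma'|\,|\gamma-\gamma'|}\ll \frac{\log(q|\gamma|)^2}{|\gamma|}.\]
To see this, split at $|\gamma'|\le 2|\gamma|$. In that range $1/|\gamma'|\le 1/|\gamma|$ and the harmonic sum over $k\le 3|\gamma|$ gives a factor $\log|\gamma|\cdot\log(q|\gamma|)$. Beyond it, $|\gamma-\gamma'|\asymp|\gamma'|$, and $\sum_{\gamma'}\gamma'^{-2}$ over $|\gamma'|>2|\gamma|$ is $\ll\log(q|\gamma|)/|\gamma|$. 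So for fixed $\gamma$ the inner sum is
\[\ll \frac{1}{|\gamma|}\Bigl(Y\log(q|\gamma|)+\log(q|\gamma|)^2\Bigr).\]
Finally we sum over $|\gamma|\ge T$ using the unit-interval counts once more. This gives
\[\sum_{n\ge T}\frac{\log(qn)}{n^2}\bigl(Y\log(qn)+\log(qn)^2\bigr)\ll \frac{Y\log(qT)^2+\log(qT)^3}{T}.\]
Since $\log(qT)\le 2\log q\log T$ for $q,T\ge3$, we have $\log(qT)^2\ll(\log q)^2(\log T)^2$. For the cubic term we need $(\log q)^2(\log T)^3$; the bound $\log(qT)^3\le 8(\log q)^3(\log T)^3$ loses one factor of $\log q$. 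The main obstacle is therefore to avoid the $\log q$ loss in the $\min(Y,\cdot)$-free part. We must arrange the splitting so that only two counting factors carry $\log q$. In the far range $|\gamma'|>2|\gamma|$ the count of $\gamma'$ costs one $\log(q\cdot)$, and the outer count over $\gamma$ costs another. The harmonic factor $\sum_k 1/k\ll\log|\gamma|$ should be bounded purely by $\log T$-type quantities and not by a zero count. With this, each term carries at most $(\log q)^2$ times powers of $\log$ of the height, as required.
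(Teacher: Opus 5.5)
Your proposal is correct and follows essentially the paper's argument: local zero counts from the Riemann--von Mangoldt formula give (1) and (2), and for (3) you split into a near-diagonal range (weight $Y$), an intermediate range handled by a harmonic sum, and a far range controlled by $\sum\gamma'^{-2}$, with your symmetry reduction to $|\gamma_\lambda|\ge|\gamma_\chi|$ simply replacing the paper's ranges $S_1,S_2$. The $\log q$ loss you flag at the end does not actually occur: keep the intermediate bound $\log(q|\gamma|)\log|\gamma|$ that you derived instead of weakening it to $\log(q|\gamma|)^2$, and correct the per-$\gamma$ factor $1/|\gamma|$ to $1/\gamma^2$; the outer sum then gives $\log(qT)^2\log T/T\ll(\log q)^2(\log T)^3/T$, exactly as in the paper.
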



\begin{proof}
    We only prove the second bound as the first follows from the same argument.
    Define, for $x\in \Rr$, \[h(x)=\frac{1}{\tfrac{1}{4}+x^2}.\] A summation by parts yields \[  \sum_{\gamma_{\chi}}h(\gamma_\chi)= - \int_0^\infty N(x,\chi)h'(x) \mathrm{d}x. \]
    Using the fact that $|h'(x) | \ll x^{-3}$ and the estimate \eqref{R-vM1} for $N(x,\chi)$, we deduce that $\sum_{\gamma_\chi}h(\gamma_\chi)\ll\log q$, and the result follows by applying the triangle inequality.
    A detailed proof of the third estimate is given in the appendix starting on page \pageref{appendix_proof_sum_zeros}.
\end{proof}

\subsection{Consequences of the explicit formula with explicit bounds on $q$} \label{subsec: consequences explicit in q}
In this subsection, we revisit the proofs of Rubinstein--Sarnak~\cite{RS94} and make the dependencies on $q$ explicit. Let $q\ge 3$ and $t_q \colon (\Zz/q\Zz)^\times \to \Rr$ be a non-zero function satisfying $\langle t_q,\chi_0\rangle =0$. Define
\[ \begin{array}{ccccl}
    r_q& :& (\Zz/q\Zz)^\times &\longrightarrow &\Rr\\
    && a &\longmapsto& \# \{x\in (\Zz/q\Zz)^\times\, :\, x^2=a\}\, ,
\end{array} \]
and \[\begin{array}{ccccc}
    U_q & : & (\Zz/q\Zz)^\times &\longrightarrow &(\Zz/q\Zz)^\times\\
   && x&\longmapsto & x^2 
\end{array} \]

Recall the estimates of the prime number theorem under GRH: 
\begin{lemma}[\cite{MV}*{Theorem 13.7}] \label{lemma:TNP}
Assume $\mathrm{GRH}$ for Dirichlet $L$-functions modulo $q$. Then for all Dirichlet characters $\chi$ modulo $q$, we have uniformly for $x\ge 2$,
\[ \psi(x;\chi)=\delta_{\chi=\chi_0}x+O\bigl( \sqrt{x} (\log x) (\log qx) \bigr) \, ,\]
and 
\[ 
 \theta(x;\chi)=\delta_{\chi=\chi_0}x+O\bigl( \sqrt{x} (\log x) (\log qx) \bigr) \, ,
\]
where $\delta_{\chi=\chi_0}$ is equal to $1$ if $\chi=\chi_0$ and $0$ otherwise.
\end{lemma}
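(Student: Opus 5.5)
This is \cite{MV}*{Theorem 13.7}, and I would prove it by the standard route through the truncated explicit formula, keeping the dependence on $q$ explicit at every step.

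\emph{Reduction to primitive characters.} Let $\chi^*$ be the primitive character of conductor $q_\chi \mid q$ inducing $\chi$. The functions $\psi(x;\chi)$ and $\psi(x;\chi^*)$ differ only on prime powers $p^k \le x$ with $p \mid q$. Hence
\[|\psi(x;\chi)-\psi(x;\chi^*)| \le \sum_{p\mid q}\log p \left\lfloor \frac{\log x}{\log p}\right\rfloor \le \omega(q)\log x \ll (\log q)(\log x),\]
which is admissible. For $\chi_0$, the same step reduces the problem to $\psi(x)$ and the zeros of $\zeta$, with main term $x$.

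\emph{Truncated explicit formula.} For primitive $\chi^*$ and $2\le T\le x$, I would invoke the truncated explicit formula
\[\psi(x;\chi^*)=\delta_{\chi=\chi_0}\,x-\sum_{|\gamma|\le T}\frac{x^{\rho}}{\rho}+O\Bigl(\frac{x(\log qx)^2}{T}+\log x\,\log q\Bigr).\]
Here the $O(\log x\log q)$ term absorbs the contribution of a possible trivial zero at $s=0$ and the constant terms. Under GRH we have $|x^\rho/\rho|=\sqrt{x}/|\rho|$. Write $N(t,\chi^*)$ for the zero-counting function, so that by \eqref{R-vM1} the number of zeros with $t<|\gamma|\le t+1$ is $O(\log q_\chi t)$. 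Partial summation then gives
\[\sum_{|\gamma|\le T}\frac{1}{|\rho|}\ll \int_1^T \frac{\log(q t)}{t}\,dt+\log q \ll (\log T)(\log qT).\]
The bounded zeros with $|\gamma|\le 1$ are handled using $|\rho|\ge 1/2$ together with the count $O(\log q)$. Choosing $T=\sqrt{x}$ makes the truncation error $O(\sqrt{x}(\log qx)^2)$. This is $\ll \sqrt{x}(\log x)(\log qx)$ as soon as $\log q\ll \log x$. In the opposite range $q>x$ the claimed bound exceeds the trivial bound $|\psi(x;\chi)|\le\psi(x)\ll x$, so the estimate holds trivially. Taking instead $T=x$ gives the same conclusion directly. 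This proves the estimate for $\psi$.

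\emph{Passage to $\theta$.} The difference $\psi(x;\chi)-\theta(x;\chi)$ is bounded in absolute value by $\sum_{k\ge2}\theta(x^{1/k})\ll\sqrt{x}$. This is within the error term, so the estimate for $\theta$ follows from the one for $\psi$.

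\emph{Main obstacle.} The only delicate point is making sure that every constant in the explicit formula is uniform in $q$. Two inputs have to be checked: the uniform Riemann--von Mangoldt count \eqref{R-vM1}, and the uniform bound for the truncation error. The truncation error comes from Perron's formula combined with the standard bound $L'/L(s,\chi^*)\ll(\log q|t|)^2$ on horizontal segments chosen away from zeros. I would take both of these directly from \cite{MV}, Chapter 12.
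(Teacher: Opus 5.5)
Your route (reduction to primitive characters, the truncated explicit formula, the GRH bound $\sum_{|\gamma|\le T}1/|\rho|\ll(\log T)(\log qT)$, and $\psi-\theta\ll\sqrt{x}$) is the standard proof behind Montgomery--Vaughan's Theorem 13.7, which the paper only cites. The proof works, but only with $T=x$. Your main justification with $T=\sqrt{x}$ is wrong.

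\textbf{Where $T=\sqrt{x}$ fails.} For $q>x$ the target error does not in general exceed the trivial bound $O(x)$. Take $\log q=(\log x)^2$. The target is then $\sqrt{x}(\log x)(\log qx)\asymp\sqrt{x}(\log x)^3=o(x)$, so the trivial bound does not help. Meanwhile the truncation error with $T=\sqrt{x}$ is $\sqrt{x}(\log qx)^2\asymp\sqrt{x}(\log x)^4$, which is too large. More generally, $T=\sqrt{x}$ fails whenever $\log q/\log x\to\infty$ while $\log qx\le\sqrt{x}/\log x$, and the trivial bound does not cover that range. The dichotomy ``$\log q\ll\log x$ versus $q>x$'' is not a valid case split.

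\textbf{The argument with $T=x$.} Make $T=x$ your main line, and note that it is not quite direct. The contributions are:
\begin{itemize}
\item the truncation error $O((\log qx)^2)$;
\item the zero sum $O(\sqrt{x}(\log x)(\log qx))$;
\item the trivial-zero and constant terms $O(\log x\log q)$, using $b(\chi)\ll\log q$ under GRH since $|\rho|\ge 1/2$.
\end{itemize}
Now $(\log qx)^2\le\sqrt{x}(\log x)(\log qx)$ as long as $\log qx\le\sqrt{x}\log x$. In the complementary range the target satisfies $\sqrt{x}(\log x)(\log qx)>x(\log x)^2$. There, and only there, the trivial bound $|\psi(x;\chi)-\delta_{\chi=\chi_0}x|\ll x$ completes the proof. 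The remaining steps are correct: the primitive reduction costing $\omega(q)\log x\ll(\log q)(\log x)$, the principal character via RH for $\zeta$, and the passage from $\psi$ to $\theta$.
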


We now state the explicit formula with explicit bounds in terms of $q$: 
\begin{lemma}\label{explicit-formula}
Assume $\mathrm{GRH}$ for Dirichlet $L$-functions modulo $q$. Then, uniformly for $x\ge 2$ and $T>0$, we have 
\[ \psi(x;t_q)=-\sqrt{x}\sum_{\chi \ne \chi_0}\langle t_q,\chi\rangle \sum_{|\gamma_\chi| \le T}\frac{x^{i\gamma_\chi}}{\tfrac{1}{2}+i\gamma_\chi}+O\left( \lambda(t_q) (\log q)^2\left(\log x + \frac{x}{T}(\log xT)^2 \right)\right) ,\]
where $\lambda(t_q)$ is defined in~\eqref{Littlewood-norm}.
\end{lemma}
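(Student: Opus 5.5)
By Fourier inversion, $t_q=\sum_{\chi}\langle t_q,\chi\rangle\chi$. Since $t_q$ is orthogonal to $\chi_0$, we have
\[\psi(x;t_q)=\sum_{\chi\neq\chi_0}\langle t_q,\chi\rangle\,\psi(x,\chi)+O\Bigl(\lambda(t_q)\log q\,\log x\Bigr),\]
where $\psi(x,\chi)=\sum_{n\le x}\chi(n)\Lambda(n)$. The error term accounts for the difference between summing over $(n,q)=1$ and using $\chi$ directly. That difference only involves prime powers $p^k\le x$ with $p\mid q$, and these contribute at most $\sum_{p\mid q}\log p\cdot \frac{\log x}{\log p}\ll \omega(q)\log x\ll \log q\,\log x$ per character. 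The plan is therefore to prove, for each nontrivial $\chi$ modulo $q$ and uniformly in $x\ge 2$ and $T>0$, a truncated explicit formula
\[\psi(x,\chi)=-\sqrt{x}\sum_{|\gamma_\chi|\le T}\frac{x^{i\gamma_\chi}}{\tfrac12+i\gamma_\chi}+O\Bigl((\log q)^2\Bigl(\log x+\frac{x}{T}(\log xT)^2\Bigr)\Bigr).\]
Multiplying by $\langle t_q,\chi\rangle$ and summing with the triangle inequality then gives the factor $\lambda(t_q)$. GRH lets us write each zero as $\rho=\tfrac12+i\gamma_\chi$.

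For the single-character formula I would follow the classical derivation, keeping track of $q$ (cf. \cite{MV}*{Theorem 12.12} and its proof for nonprincipal characters).

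First, reduce to primitive characters. If $\chi$ is induced by $\chi^*$ of conductor $q_\chi$, then $\psi(x,\chi)-\psi(x,\chi^*)\ll \log q\log x$, and the two $L$-functions have the same nontrivial zeros in the critical strip, so we may assume $\chi$ primitive.

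Second, apply Perron's formula with a truncated contour at height $T$ and abscissa $c=1+1/\log x$. This gives
\[\psi(x,\chi)=-\frac{1}{2\pi i}\int_{c-iT}^{c+iT}\frac{L'}{L}(s,\chi)\frac{x^s}{s}\,ds+O\Bigl(\frac{x}{T}(\log x)^2+\log x\Bigr).\]
Third, shift the contour to the left and collect residues: the zeros $\rho$ with $|\gamma|\le T$, the trivial zero at $s=0$ or $s=-1$ (depending on parity), and the term $\frac{L'}{L}(0,\chi)$ or its analogue. The constant term is controlled by $\frac{L'}{L}(0,\chi)\ll \log q$ under GRH, and the trivial-zero contributions are $O(\log x)$. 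The horizontal segments are handled by choosing $T$ at distance $\gg 1/\log(qT)$ from every ordinate, using $\frac{L'}{L}(\sigma+iT,\chi)\ll(\log qT)^2$ there; this yields $O\bigl(\frac{x}{T}(\log qT)^2\bigr)$. To pass from such an adjusted $T$ to an arbitrary $T$, note that only $O(\log qT)$ zeros lie in a unit window, each contributing $O(\sqrt{x}/T)$, which is absorbed.

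The main obstacle I expect is bookkeeping the $q$-dependence in the error terms so that everything fits inside $(\log q)^2(\log x+\frac{x}{T}(\log xT)^2)$. Factors of $\log(qT)^2$ must be bounded by $(\log q)^2(\log T)^2$ up to constants, since $\log(qT)\le \log q\cdot(1+\log T)\ll\log q\log(xT)$ for $q\ge3$ and $x\ge2$. Small $T$ (say $T\le2$) must also be treated separately. There the sum over zeros has $O(\log q)$ terms of size $\sqrt{x}$, which is $\ll x\log q/T$, so the trivial bound $\psi(x,\chi)\ll x$ already suffices, again after absorbing into $\frac{x}{T}(\log q)^2$.
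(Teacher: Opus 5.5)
Your proposal is correct and follows the paper's route. You expand $t_q=\sum_{\chi}\langle t_q,\chi\rangle\chi$, apply the classical truncated explicit formula \cite{MV}*{Theorem 12.12} to each $\psi(x,\chi)$ with $\chi\neq\chi_0$, and sum with the triangle inequality; the only difference is that you re-derive that formula with the $q$-dependence tracked, while the paper simply cites it. Your first-step error term is in fact zero, since $\chi(n)=0$ whenever $(n,q)>1$.

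One caveat concerns small $T$. Absorbing the trivial bound into the stated error needs $(\log xT)^2\gg 1$, for instance $T\ge 1$, not just $\frac{x}{T}(\log q)^2$. At $T=1/x$ the stated error collapses to $\lambda(t_q)(\log q)^2\log x$, which is false in general. So the lemma should be read for $T\ge 1$, which covers every use in the paper.
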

\begin{proof}
    It suffices to write \[ \psi(x;t_q)=\sum_{\chi \ne \chi_0}\langle t_q,\chi \rangle \psi(x;\chi)\,\]
    and to use the standard explicit formula \cite{MV}*{Theorem 12.12} for each $\psi(x;\chi)$.
\end{proof}

In order to simplify notations, we will denote $t_q^*:=t_q\circ U_q \colon a \mapsto t_q(a^2)$ so that \[\langle t_q^*,\chi_0\rangle =\langle t_q , r_q \rangle,\] and to keep track of future dependencies in $q$, we define \begin{equation} \label{eq:Ctq}
    C(t_q) = \max(\lambda(t_q)(\log q)^2, \lambda(t_q^*) \log q).
\end{equation}
We note that \[ |\langle t_q,r_q\rangle|\leq \|t_q^*\|_\infty\leq\|t_q\|_\infty \leq \lambda(t_q) .\]
\begin{lemma}\label{relating-pi-psi}
Assume $\mathrm{GRH}$. Then, uniformly for $x\ge 2$, we have
\[ \pi(x;t_q)=-\frac{\sqrt{x}}{\log x} \langle t_q,r_q \rangle+\frac{\psi(x;t_q)}{\log x}+O\left(\frac{C(t_q) x^{1/2}}{(\log x)^2}\right).\]

\end{lemma}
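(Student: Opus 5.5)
We will follow the classical passage from $\psi$ to $\pi$ in Rubinstein--Sarnak, keeping track of every constant in terms of $t_q$ and $q$. The first step is to split $\psi(x;t_q)$ according to prime powers:
\[
\psi(x;t_q)=\theta(x;t_q)+\sum_{\substack{p\le \sqrt{x}\\ (p,q)=1}} t_q(p^2)\log p+\sum_{k\ge 3}\sum_{\substack{p\le x^{1/k}\\ (p,q)=1}} t_q(p^k)\log p .
\]
The terms with $k\ge 3$ are trivially $O(\|t_q\|_\infty x^{1/3}\log x)$, and since $\|t_q\|_\infty\le \lambda(t_q)$ this is absorbed in the error term. The squares contribute exactly $\theta(\sqrt{x};t_q^*)$, because $t_q(p^2)=t_q^*(p)$.

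We then expand $t_q^*$ in characters, $t_q^*=\langle t_q^*,\chi_0\rangle\chi_0+\sum_{\chi\ne\chi_0}\langle t_q^*,\chi\rangle\chi$, and apply Lemma \ref{lemma:TNP} to each $\theta(\sqrt{x};\chi)$. Using $\langle t_q^*,\chi_0\rangle=\langle t_q,r_q\rangle$, this gives
\[
\theta(\sqrt{x};t_q^*)=\langle t_q,r_q\rangle\sqrt{x}+O\bigl(\lambda(t_q^*)\,x^{1/4}(\log x)(\log qx)\bigr),
\]
and hence $\theta(x;t_q)=\psi(x;t_q)-\langle t_q,r_q\rangle\sqrt{x}+O(\lambda(t_q^*)x^{1/4}(\log x)(\log qx)+\lambda(t_q)x^{1/3}\log x)$. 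This error is far smaller than $C(t_q)x^{1/2}/\log x$, so at this stage no loss occurs.

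The last step is partial summation, $\pi(x;t_q)=\frac{\theta(x;t_q)}{\log x}+\int_2^x\frac{\theta(u;t_q)}{u(\log u)^2}\,\mathrm{d}u$. The boundary term gives the two main terms of the statement directly. For the integral we do not insert the decomposition above; instead we bound $\theta(u;t_q)$ directly by writing $\theta(u;t_q)=\sum_{\chi\ne\chi_0}\langle t_q,\chi\rangle\theta(u;\chi)$ and using Lemma \ref{lemma:TNP}. This yields $|\theta(u;t_q)|\ll\lambda(t_q)\sqrt{u}(\log u)(\log qu)$ (the $\chi_0$ term vanishes since $\langle t_q,\chi_0\rangle=0$). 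Integrating, $\int_2^x u^{-1/2}(\log qu)(\log u)^{-1}\,\mathrm{d}u\ll \sqrt{x}\,\log(qx)/\log x$, which gives a contribution $\ll\lambda(t_q)\sqrt{x}\log(qx)/\log x$.

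This is the main obstacle. In the range where $\log q\gg\log x$ the bound is not of the shape $C(t_q)x^{1/2}/(\log x)^2$ unless we spend a factor $\log q$. Indeed $\log(qx)\le 2\log q\log x$ for $q\ge3$, $x\ge2$, but that only gives $\lambda(t_q)\log q\sqrt{x}$, which is still too weak by a factor $\log x$. To fix this, we first try splitting the integral at $u=\sqrt{x}$. The lower range contributes $\ll\lambda(t_q)x^{1/4}\log q$, which is trivially fine. On the upper range $\log u\asymp\log x$, so the integrand is $\ll\lambda(t_q)u^{-1/2}(\log qu)/\log x$, and this range yields $\lambda(t_q)\sqrt{x}\log(qx)/\log x$. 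We then use $\log(qx)\ll(\log q)^2\log x/\log x$ only if needed. If the logs still do not balance, we fall back on the fact that the explicit formula, Lemma \ref{explicit-formula}, gives the sharper pointwise bound $\theta(u;t_q)\ll\lambda(t_q)(\log q)^2\sqrt{u}$ on average via Corollary \ref{sums-zeros}. Concretely, we integrate the explicit formula term by term: $\int^x u^{-1/2+i\gamma}(\log u)^{-2}\,\mathrm{d}u\ll\sqrt{x}/(|\tfrac12+i\gamma|(\log x)^2)$. Summing with the weights $1/|\tfrac12+i\gamma|$ via Corollary \ref{sums-zeros}(2), which gives $\sum_\gamma 1/(\tfrac14+\gamma^2)\ll\log q$, this yields exactly $\lambda(t_q)(\log q)^2\sqrt{x}/(\log x)^2\le C(t_q)\sqrt{x}/(\log x)^2$, as required.
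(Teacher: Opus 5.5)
Your final paragraph gives a correct argument, and it uses the same mechanism as the paper. The pointwise bound from Lemma~\ref{lemma:TNP} is too weak, as you correctly observe. The loss is recovered by integrating \emph{before} taking absolute values over the zeros, so that each zero gains a factor $|\tfrac12+i\gamma|^{-1}$ and the zero sum converges absolutely.

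The paper orders things differently. It first integrates $\psi$, setting $G(x;t_q)=\int_2^x\psi(u;t_q)\,\mathrm{d}u$, whose explicit formula has weights $1/((\tfrac12+i\gamma)(\tfrac32+i\gamma))$. Corollary~\ref{sums-zeros}(1) then gives $G(x;t_q)\ll C(t_q)x^{3/2}$, with $T\to\infty$ already taken, and a single integration by parts against $u^{-1}(\log u)^{-2}$ finishes. You instead integrate $u^{-1/2+i\gamma}(\log u)^{-2}$ by parts zero by zero, which gives your bound uniformly in $\gamma$, and then sum with Corollary~\ref{sums-zeros}(2). This is equivalent, just with slightly more bookkeeping.

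The write-up, however, needs cleaning before it is a proof.
\begin{itemize}
\item The sentence asserting $\log(qx)\ll(\log q)^2\log x/\log x$, i.e.\ $\log(qx)\ll(\log q)^2$, is false once $\log x\gg(\log q)^2$. Delete it.
\item The explicit formula gives no pointwise bound $\theta(u;t_q)\ll\lambda(t_q)(\log q)^2\sqrt u$, since the series $\sum_\gamma|\tfrac12+i\gamma|^{-1}$ diverges. Only the integrated statement holds, so delete that claim too.
\item Your remark that you ``do not insert the decomposition'' into the integral contradicts your final step. The explicit formula is for $\psi$, so inside the integral you must write $\theta(u;t_q)=\psi(u;t_q)-\langle t_q,r_q\rangle\sqrt u+O\bigl(\lambda(t_q^*)u^{1/4}(\log u)\log(qu)+\lambda(t_q)u^{1/3}\log u\bigr)$. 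You then need to check that the last two terms contribute $O(C(t_q)\sqrt x/(\log x)^2)$; they do.
\item You must account for the truncation error of Lemma~\ref{explicit-formula}. Integrated against $u^{-1}(\log u)^{-2}$, it contributes $\ll\lambda(t_q)(\log q)^2(1+\log\log x)+\lambda(t_q)(\log q)^2x(\log xT)^2/T$. Since your bound on the truncated zero sum is uniform in $T$, you can let $T\to\infty$.
\end{itemize}
Incidentally, the zero sum itself only gives $\lambda(t_q)\log q\,\sqrt x/(\log x)^2$; the factor $(\log q)^2$ in your final bound comes from this truncation error term.
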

\begin{proof}
First, we separate the contributions of primes, squares of primes, and higher powers of primes, to get $$\psi(x; t_q) = \theta(x;t_q) + \theta(x^{1/2}; t_q^*) + O(x^{1/3} \|t_q\|_{\infty}),$$ where we used Chebyshev's bound $\theta(x^{1/k}) \ll x^{1/k}$ for the remainder term. Lemma \ref{lemma:TNP} and the decomposition $t_q^* = \sum_{\chi} \langle t_q^*, \chi \rangle \chi$ yield \begin{equation} \label{theta}\theta(x; t_q) = \psi(x; t_q) - \langle t_q, r_q \rangle \sqrt{x} + O(\lambda(t_q^*)x^{1/4} \log x \log(qx) + x^{1/3} \|t_q\|_{\infty}).\end{equation} The $O$ term clearly is $O(C(t_q)x^{1/3})$.
A summation by parts gives \[ \pi(x;t_q)=\frac{\theta(x;t_q)}{\log x}+\int_2^x\frac{\theta(u;t_q)}{u(\log u)^2}\mathrm{d}u \, .\]
By (\ref{theta}), we have $$\frac{\theta(x;t_q)}{\log x} = -\frac{\sqrt{x}}{\log x} \langle t_q,r_q \rangle+\frac{\psi(x;t_q)}{\log x}+ O\left(\frac{C(t_q)x^{1/3}}{\log x}\right)$$ and $$\int_2^x\frac{\theta(u;t_q)}{u(\log u)^2}\mathrm{d}u = \int_2^x\frac{\psi(u;t_q)}{u(\log u)^2}\mathrm{d}u + O\left(\frac{C(t_q) x^{1/2}}{(\log x)^2}\right).$$
Therefore, it suffices to prove that \[ \int_2^x \frac{\psi(u;t_q)}{u(\log u)^2}\mathrm{d} u \ll \frac{C(t_q) x^{1/2}}{(\log x)^2}.\]
Define $G(x;t_q):=\int_2^x\psi(u;t_q) \mathrm{d}u$. We integrate in Lemma~\ref{explicit-formula}, and let $T \to \infty$ to obtain \[G(x;t_q)=-\sum_{\chi}\langle t_q,\chi \rangle \sum_{\gamma_\chi}\frac{x^{3/2 + i\gamma_\chi}}{(\tfrac{1}{2}+i\gamma_\chi)(\tfrac{3}{2}+i\gamma_\chi)}+O\bigl(\lambda(t_q)(\log q)^2 x\log x\bigr),\] where the series converges absolutely by Corollary ~\ref{sums-zeros} i). That same bound implies in particular that $G(x;t_q)\ll C(t_q) x^{3/2}$. Integrating by parts, we obtain
    \begin{align*}
        \int_2^x \frac{\psi(u;t_q)}{u (\log u)^2}\mathrm{d}u&=\frac{G(x;t)}{x(\log x)^2}+\int_2^x\frac{(\log u)^2+2\log u }{u^2 (\log u)^4}G(u;t_q) \mathrm{d}u\\
        &\ll C(t_q) \frac{x^{1/2}}{(\log x)^2}\, .
    \end{align*}
\end{proof}

Define for all $y\ge 1$ \begin{equation}\label{Def-E(y)}
E(y)=E_{t_q}(y):=\frac{y}{e^{y/2}}\pi\bigl( e^y;t_q\bigr)    
\end{equation}  and for $T \geq 1$, 
\begin{equation}\label{Def-ET(y)}  E^{(T)}(y)=E_{t_q}^{(T)}(y):=-\langle t_q, r_q \rangle-\sum_\chi \sum_{|\gamma_\chi| \le T}\langle t_q ,\chi \rangle \frac{e^{iy\gamma_\chi}}{\tfrac{1}{2}+i\gamma_\chi}.\end{equation}


\begin{lemma}\label{almost-periodicity} Assume $\mathrm{GRH}$. Then, uniformly for $Y\ge \log T$, we have:
\[\frac{1}{Y}\int_{\log 2}^Y \bigl|E(y)-E^{(T)}(y)\bigr|\mathrm{d}y \ll C(t_q) \left(\frac{\log T}{\sqrt{T}}+\frac{1}{\sqrt Y}\right).\]
\end{lemma}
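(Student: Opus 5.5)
Combining Lemma~\ref{relating-pi-psi} with Lemma~\ref{explicit-formula} at $x=e^y$ expresses $E(y)-E^{(T)}(y)$ as a sum of three pieces, and I will bound each in $L^1$ average over $[\log 2,Y]$. Multiplying Lemma~\ref{relating-pi-psi} by $y e^{-y/2}$ gives
\[ E(y) = -\langle t_q,r_q\rangle + e^{-y/2}\psi(e^y;t_q) + O\!\left(\frac{C(t_q)}{y}\right). \]
I will not truncate the explicit formula at $T$ directly, since its error term $e^{y/2}/T$ is useless for $y$ large. Instead I will use the untruncated (conditionally convergent, or $L^2$) form, which yields
\[ E(y)-E^{(T)}(y) = -\sum_{\chi}\langle t_q,\chi\rangle\sum_{|\gamma_\chi|>T}\frac{e^{iy\gamma_\chi}}{\tfrac12+i\gamma_\chi} + O\!\left(\frac{C(t_q)}{y} + C(t_q)\, y\, e^{-y/2}\right). \]
Here the $y e^{-y/2}$ term absorbs the $\lambda(t_q)(\log q)^2\log x$ error from Lemma~\ref{explicit-formula} after normalization. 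The cleanest route is to compare with $E^{(T')}$ for $T'\to\infty$. For fixed $y$, taking $T'=e^{2y}$ in Lemma~\ref{explicit-formula} makes the error $O(\lambda(t_q)(\log q)^2 y^2 e^{-3y/2})$, which is negligible, so only the tail $T<|\gamma|\le T'$ remains.

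The deterministic error terms average to
\[ \frac1Y\int_{\log 2}^Y\frac{C(t_q)}{y}\,dy \ll C(t_q)\frac{\log Y}{Y} \ll \frac{C(t_q)}{\sqrt Y}, \]
and the term $C(t_q)ye^{-y/2}$ is similarly harmless.

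The main work is the tail
\[ R_{T,T'}(y) := \sum_\chi \langle t_q,\chi\rangle\sum_{T<|\gamma_\chi|\le T'}\frac{e^{iy\gamma_\chi}}{\tfrac12+i\gamma_\chi}. \]
I will apply Cauchy--Schwarz, bounding $\frac1Y\int |R|$ by $\bigl(\frac1Y\int |R|^2\bigr)^{1/2}$. Expanding the square, using $\bigl|\int_{\log 2}^Y e^{iy(\gamma-\gamma')}\,dy\bigr| \ll \min(Y, |\gamma-\gamma'|^{-1})$, and applying the triangle inequality over pairs $(\chi,\lambda)$ with weights $|\langle t_q,\chi\rangle\langle t_q,\lambda\rangle|$, I obtain
\[ \frac1Y\int|R|^2 \ll \frac{\lambda(t_q)^2}{Y}\max_{\chi,\lambda}\sum_{|\gamma_\chi|,|\gamma_\lambda|>T}\frac{\min(Y,|\gamma_\chi-\gamma_\lambda|^{-1})}{|\gamma_\chi\gamma_\lambda|}. \]
By Corollary~\ref{sums-zeros}(3), this is
\[ \ll \lambda(t_q)^2(\log q)^2\left(\frac{(\log T)^2}{T}+\frac{(\log T)^3}{YT}\right) \ll \lambda(t_q)^2(\log q)^2\frac{(\log T)^2}{T}, \]
where the last step uses $Y\ge\log T$. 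The bound is uniform in $T'$. Taking square roots gives $\lambda(t_q)\log q\,\frac{\log T}{\sqrt T}\le C(t_q)\frac{\log T}{\sqrt T}$.

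The main obstacle is justifying the passage $T'\to\infty$ uniformly in $y$. I would handle it by Fatou's lemma: for each $y$, $R_{T,T'}(y)$ converges to the true tail as $T'\to\infty$, with the exceptional error above tending to $0$. Alternatively, for each $y$ I would split at $T'=e^{2y}$ and apply the $L^2$ bound with $T'$ large enough, which is legitimate because the $L^2$ bound does not depend on $T'$.

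A secondary subtlety is that $\min(Y,\cdot)$ uses the integration length; this matches Corollary~\ref{sums-zeros}(3) as stated. The condition $Y\ge\log T$ is exactly what makes the $(\log T)^3/(YT)$ term dominated by $(\log T)^2/T$.

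Collecting the tail bound and the averaged error terms gives the claimed $C(t_q)\bigl(\log T/\sqrt T + 1/\sqrt Y\bigr)$.
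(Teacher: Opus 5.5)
Your argument is correct and is essentially the paper's proof. You combine Lemma~\ref{relating-pi-psi} with the explicit formula, expand the mean square of the zero tail using $\min(Y,|\gamma_\chi-\gamma_\lambda|^{-1})$ and Corollary~\ref{sums-zeros}(3), and finish with Cauchy--Schwarz. The only difference is that the paper avoids your $T'\to\infty$/Fatou step by truncating at the $y$-independent height $e^Y$: this is where $Y\ge\log T$ enters, ensuring $e^Y\ge T$, and the extra error $C(t_q)e^{y/2}Y^2e^{-Y}$ is harmless. That same uniform cutoff, rather than the $y$-dependent $T'=e^{2y}$ of your second alternative, is also what the $L^2$ expansion requires, since expanding the square needs a fixed set of zeros.
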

\begin{proof}
Combining Lemmas~\ref{explicit-formula} and~\ref{relating-pi-psi}, we obtain, for $Y \geq \log T$, $$E(y) - E^{(T)}(y) = -\sum_{\chi}\sum_{T<|\gamma_\chi|\le e^Y}\langle t_q,\chi\rangle \frac{e^{iy\gamma_\chi}}{\tfrac{1}{2}+i\gamma_\chi}+O\left(C(t_q) \left(\frac{1}{y}+\frac{e^{y/2} Y^2}{e^Y}\right)\right).$$ Taking squares and integrating between $\log 2$ and $Y$, we obtain \begin{align*}& \int_{\log 2} ^Y \bigl |E(y)-E^{(T)}(y) \bigr|^2 \mathrm{d}y \ll \int _{\log 2}^Y \left| \sum_\chi \langle t_q, \chi \rangle \sum_{T<  |\gamma_\chi|\le e^Y}\frac{e^{iy \gamma_\chi}}{\tfrac{1}{2}+i\gamma_\chi}\right|^2 \mathrm{d}y+O(C(t_q)^2 ) \\
    &=\int_{\log 2}^Y \sum_{\chi, \lambda} \langle t_q, \chi\rangle \overline{\langle \lambda, t_q \rangle}\sum_{T< |\gamma_\chi|, |\gamma_\lambda| \le e^Y}\frac{e^{iy(\gamma_\chi-\gamma_\lambda)}\mathrm{d}y}{(\tfrac{1}{2}+i\gamma_\chi)(\tfrac{1}{2}-i\gamma_\lambda)}+O( C(t_q)^2)\\
    &\ll \sum_{\chi,\lambda \ne \chi_0} \langle \chi,t_q\rangle \overline{\langle \lambda, t_q \rangle} \sum_{|\gamma_\chi|, |\gamma_\lambda|> T}\frac{1}{|\gamma_\chi \gamma_\lambda|}\min\left(Y,\frac{1}{|\gamma_\chi-\gamma_\lambda|}\right)+O( C(t_q)^2) \\
    &\ll \lambda(t_q)^2 (\log q)^2 \left(Y\frac{(\log T)^2}{T}+\frac{(\log T)^3}{T}\right)+O( C(t_q)^2)\\
    &\ll C(t_q)^2\left(1+Y\frac{(\log T)^2}{T}\right)
    \end{align*}
    where we used Corollary~\ref{sums-zeros} iii) to deduce the second-to-last estimate. Applying the Cauchy-Schwarz inequality, we deduce that \begin{align*}\frac{1}{Y}\int_{\log 2}^Y \bigl|E(y)-E^{(T)}(y)\bigr|\mathrm{d}y &\le \left( \frac{1}{Y} \int_{\log 2}^Y \bigl | E(y) - E^{(T)}(y) \bigr|^2 \mathrm{d}y \right)^{1/2} \\ &\ll C(t_q) \left(\frac{\log T}{\sqrt{T}}+ \frac{1}{\sqrt Y}\right). \end{align*}
    This finishes the proof of the Lemma.

\end{proof}

\begin{remark}
Lemma \ref{almost-periodicity} is the analog of \cite[Lemma 2.2]{RS94}, with an explicit dependency in $q$. Note however that we have a $\frac{1}{\sqrt Y}$ term instead of $\frac{\log T}{\sqrt{TY}}$ in our statement. This is because there is a small mistake in \cite[Lemma 2.2]{RS94}, as the authors drop a $O(1)$ term in their proof, which cannot be done unless $Y$ is larger than $\frac{T}{\log^2T}$. This does not change the fact that, for fixed $q$, the above quantity goes to zero as $T \to \infty$ since $Y$ is restricted to being larger than $\log T$.
\end{remark}

\subsection{Existence of a limiting distribution and quantitative convergence}

Let us define a probability measure $\mu_X$ on $\Rr$ to be the pushforward of the normalized Lebesgue measure on $[\log 2\, , X]$ by $E=E_{t_q}$, so that for all bounded and continuous functions $f$ we have \begin{equation} \label{eq: f de E}  \int_\Rr f(y) \mathrm{d}\mu_X(y)=\frac{1}{X-\log 2}\int_{\log 2}^X f(E(y))\mathrm{d}y. \end{equation}
Similarly, we define $\mu_X^{(T)}$ to be the unique probability measure on $\Rr$ such that for all continuous and bounded functions $f$, 
\begin{equation} \label{eq: f de ET} \int_{\Rr}f(y)\mathrm{d}\mu_X^{(T)}(y)=\frac{1}{X-\log 2}\int_{\log 2}^X f\left(E^{(T)}(y)\right)\mathrm{d}y .\end{equation}

As a first step in this subsection, we want to control the distance $\wass_1(\mu_X^{(T)}, \mu_X)$ (where $\wass_1$ is defined with respect to the usual Euclidean metric on $\Rr$). In view of \eqref{eq: f de E} and \eqref{eq: f de ET}, this should follow from estimates for the size of the error term when approximating $E$ by $E^{(T)}$, which is precisely the content of \S \ref{subsec: consequences explicit in q}. This is what we do in the following lemma. 

\begin{lemma} \label{lem: trunc step W_1}
Assume $\mathrm{GRH}$. For all $T\ge 3$ and all $X\ge \log T$ we have 
\[\wass_1\left(\mu_X^{(T)},\mu_X\right) \ll C(t_q) \left(\frac{\log T}{\sqrt{T}}+\frac{1}{\sqrt X}\right).\]
\end{lemma}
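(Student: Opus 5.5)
The natural route is to build an explicit coupling of $\mu_X^{(T)}$ and $\mu_X$ from the common parametrisation by $y$. Let $\kappa$ denote the normalized Lebesgue measure on $[\log 2, X]$. Consider the map
\[ \Phi \colon y \mapsto \bigl(E^{(T)}(y), E(y)\bigr), \]
and set $\pi := \Phi_* \kappa$. Its marginals are exactly $\mu_X^{(T)}$ and $\mu_X$, by \eqref{eq: f de ET} and \eqref{eq: f de E}. So $\pi \in \Pi(\mu_X^{(T)},\mu_X)$, and by the definition \eqref{eq: def wasserstein p} with $p=1$,
\[ \wass_1\left(\mu_X^{(T)},\mu_X\right) \leq \int_{\Rr\times\Rr} |u-v| \,\mathrm{d}\pi(u,v) = \frac{1}{X-\log 2}\int_{\log 2}^X \bigl|E^{(T)}(y)-E(y)\bigr|\,\mathrm{d}y. \]

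Before this step I will check that both measures lie in $\mathcal{P}_1(\Rr)$, so that $\wass_1$ is well defined. Both $E$ and $E^{(T)}$ are bounded on the compact interval $[\log 2,X]$: $E^{(T)}$ is a finite trigonometric sum, and $E$ is bounded since $|\pi(e^y;t_q)|\le \|t_q\|_\infty e^y$. Alternatively, the Kantorovich--Rubinstein duality (Theorem \ref{th: kantorovich-rubinstein}) gives the same inequality: for any $1$-Lipschitz $u$, one has $|u(E^{(T)}(y))-u(E(y))|\le |E^{(T)}(y)-E(y)|$, and one integrates this in $y$.

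Then I apply Lemma \ref{almost-periodicity} with $Y=X$, which is allowed since $X\ge \log T$. It gives
\[ \frac{1}{X}\int_{\log 2}^X |E-E^{(T)}|\,\mathrm{d}y \ll C(t_q)\left(\frac{\log T}{\sqrt T}+\frac1{\sqrt X}\right). \]
The only remaining point, and the mild obstacle, is the normalisation: the coupling bound uses $1/(X-\log 2)$ rather than $1/X$. When $X\ge 2\log 2$, the ratio $X/(X-\log 2)\le 2$ is bounded and absorbs into the implied constant. If $X$ is small, say $\log T\le X<2\log 2$, then $T$ is bounded. In that case I would use that the integrand is bounded on this short range, because both $E$ and $E^{(T)}$ are $O(\lambda(t_q)\sum_\chi\sum_{\gamma}|\gamma|^{-1}\ldots)$, and so the claimed right-hand side, which is $\gg C(t_q)$ there, dominates the average. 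Alternatively, this degenerate range can simply be excluded by implicitly assuming $X$ is large, since $T\ge 3$ already forces $X\ge \log 3>\log 2$ and such small ranges are harmless for the applications.
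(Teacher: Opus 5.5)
Your proposal is correct and follows the paper's argument: the paper bounds $\bigl|\int u\,\mathrm{d}\mu_X^{(T)}-\int u\,\mathrm{d}\mu_X\bigr|$ for bounded $1$-Lipschitz $u$ by the average of $|E-E^{(T)}|$ over $[\log 2,X]$, applies Lemma~\ref{almost-periodicity}, and concludes by Kantorovich--Rubinstein duality. That is the dual form of your coupling bound, and the alternative you mention yourself. Your case split for small $X$ is unnecessary: since $X\ge \log T\ge \log 3$, one has $X/(X-\log 2)\le \log 3/\log(3/2)$ uniformly, which settles the $1/X$ versus $1/(X-\log 2)$ normalization that the paper passes over silently.
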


\begin{proof}
    Let $u:\Rr\to\Rr$ be a $1$-Lipschitz bounded function. By Lemma~\ref{almost-periodicity} we have
    \begin{align*}
        \left|\int_\Rr u \mathrm{d}\mu_X^{(T)}-\int_\Rr u \mathrm{d}\mu_X \right|&\le \frac{1}{X-\log 2}\int_{\log 2}^X \bigl |E(y)-E^{(T)}(y)\bigr|\mathrm{d}y\\ &\ll C(t_q) \left(\frac{\log T}{\sqrt{T}}+\frac{1}{\sqrt X}\right).
    \end{align*} 
    The conclusion follows by taking the supremum over $u$, thanks to Theorem \ref{th: kantorovich-rubinstein}.
\end{proof}

\begin{remark}
The restriction $X \geq \log T$ is natural for our purpose, as the remainder term in the explicit formula in Lemma \ref{explicit-formula} does not go to zero as $T \to \infty$ and $x$ is fixed, owing to the contribution of trivial zeros, a potential constant term and jump discontinuities of size roughly $\log x$ at prime power values of $x$. In particular, for fixed $X$, $\mu_X^{(T)}$ does not converge weakly to $\mu_X$ as $T \to \infty$.
\end{remark}

Another fruitful point of view on $\mu_X^{(T)}$ is that it can be seen as a pushforward measure of a measure $\nu_X^{(T)}$ on $\bigl(\Ss^1\bigr)^{N(T, t_q)}$ which is defined as in the statement of the Kronecker--Weyl theorem. Let us make this claim more precise.

Let $(\gamma_n)_{n \geq 1}$ be the enumeration in non-decreasing order of the positive imaginary parts of the non trivial zeros, counted with multiplicities, of Dirichlet $L$-functions associated with characters $\chi \in \mathrm{supp}(\widehat{t_q})$. By taking the first $N(T, t_q)$ zeros, we can define a measure $\nu_X^{(T)}$ on $\bigl(\Ss^1\bigr)^{N(T, t_q)}$ as in Theorem \ref{Q-KW} with the specific choice $x_0 = \log 2$, meaning that for all continuous maps $h \colon (\Ss^1)^{N(T, t_q)} \to \Cc$,
\begin{equation} \label{eq: def nuXT}
    \int_{\bigl(\Ss^1\bigr)^{N(T, t_q)}}h\mathrm{d}\nu_X^{(T)}=\frac{1}{X-\log 2}\int_{\log 2}^X h\bigl(e^{iy\gamma_1},\dots,e^{iy\gamma_{N(T, t_q)}}\bigr)\mathrm{d}y.
\end{equation}

Since $t_q$ is real-valued, using the symmetries of the zeros, we can write 
\[E^{(T)}(y)=-\langle t_q,r_q \rangle -2\Re\left(\sum_{n=1}^{N(T, t_q)}b_n e^{i \gamma_n y}\right). \] where $b_n = \frac{\langle t_q, \chi_n \rangle}{\frac{1}{2} + i \gamma_n}$ and $\chi_n$ is the character corresponding to the zero $\frac{1}{2} + i \gamma_n$.
Therefore, if we define \begin{align} \label{def gT}
    g^{(T)}\, :\, \left(\Ss^1\right)^{N(T, t_q)}&\longrightarrow \Rr\\
    (z_1,\dots,z_{N(T, t_q)})&\longmapsto -\langle t_q,r_q \rangle-2\Re\left(\sum_{n=1}^{N(T, t_q)}b_n z_n\right), \nonumber
\end{align}
it is now a formal consequence of the definitions of the measures involved that $\mu_X^{(T)}$ is the pushforward measure of $\nu_X^{(T)}$ by $g^{(T)}$, meaning that for all continuous functions $f$,
\[
\int_{\Rr}f(y)\mathrm{d}\mu_X^{(T)}(y) =\int_{(\Ss^1)^{N(T, t_q)}}f\circ g^{(T)}\mathrm{d}\nu_X^{(T)}.
\]

At this point, previous works on the subject have shown that, for fixed $T$ and as $X$ goes to infinity, the measure $\nu_X^{(T)}$ converges weakly to the normalized Haar measure of a subtorus $\Gamma^{(T)}$ of $\bigl(\Ss^1\bigr)^{N(T, t_q)}$, which we denote by $\lambda_T$. This subtorus $\Gamma^{(T)}$ can be determined from the linear relations between the imaginary parts $\gamma \in (0, T]$ as in \cite{bailleul_explicit}. Then, it suffices to take the pushforward measure via $g^{(T)}$ to deduce that $\mu_X^{(T)}$ converges weakly, as $X$ goes to infinity, to the measure $\mu^{(T)}$ which satisfies
\begin{equation} \label{eq: def mu^T}
\int_\Rr f \mathrm{d} \mu^{(T)} = \int_{\Gamma^{(T)}}f\circ g^{(T)}\mathrm{d} \lambda_T.
\end{equation}
for all continuous functions $f$. Moreover, assuming the linear independence hypothesis LI, one gets $\Gamma^{(T)} = \bigl(\Ss^1\bigr)^{N(T, t_q)}$. 
Let us show that $(\mu^{(T)})_{T \geq 3}$ also converges with respect to the metric $\wass_1$.

\begin{lemma}\label{wass-almostperiodicity}
Assume $\mathrm{GRH}$. The sequence of measures $(\mu^{(T)})_{T \geqslant 3}$ converges in $(\mathscr{P}_1(\Rr), \wass_1)$ towards a probability measure $\mu$ with a finite first moment, and \begin{equation}\label{wass-mu-muT}
    \wass_1\left(\mu^{(T)},\mu\right)\ll  C(t_q) \frac{\log T}{\sqrt{T}} \cdot
\end{equation}
\end{lemma}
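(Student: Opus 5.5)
The strategy is to show that $(\mu^{(T)})_{T\geq 3}$ is Cauchy in $(\mathscr{P}_1(\Rr),\wass_1)$ with an explicit rate. Completeness (Proposition \ref{prop: wass properties} (3)) then gives a limit $\mu\in\mathscr{P}_1(\Rr)$, and letting $T'\to\infty$ in the Cauchy bound yields \eqref{wass-mu-muT}. The whole argument reduces to estimating $\wass_1(\mu^{(T)},\mu^{(T')})$ for $3\leq T<T'$.

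To estimate this distance, fix $3\le T<T'$ and pass through the finite-$X$ measures using the triangle inequality:
\[
\wass_1(\mu^{(T)},\mu^{(T')})\leq \wass_1(\mu^{(T)},\mu_X^{(T)})+\wass_1(\mu_X^{(T)},\mu_X^{(T')})+\wass_1(\mu_X^{(T')},\mu^{(T')}).
\]
The middle term is bounded with the same Kantorovich--Rubinstein argument as in Lemma \ref{lem: trunc step W_1}. For any bounded $1$-Lipschitz $u$, the difference of integrals is at most $\frac{1}{X-\log 2}\int_{\log 2}^X|E^{(T)}(y)-E^{(T')}(y)|\,\mathrm{d}y$. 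By Cauchy--Schwarz, this is controlled by the mean square of $\sum_\chi\langle t_q,\chi\rangle\sum_{T<|\gamma_\chi|\leq T'}\frac{e^{iy\gamma_\chi}}{1/2+i\gamma_\chi}$. Expanding the square and integrating exactly as in the proof of Lemma \ref{almost-periodicity}, then invoking Corollary \ref{sums-zeros} (3), gives
\[
\frac{1}{X}\int_{\log 2}^X|E^{(T)}-E^{(T')}|^2\,\mathrm{d}y\ll \lambda(t_q)^2(\log q)^2\left(\frac{(\log T)^2}{T}+\frac{(\log T)^3}{TX}\right).
\]
The first term in the bracket comes from the factor $Y$ in Corollary \ref{sums-zeros} (3) after dividing by $X$. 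So the middle term is $\ll C(t_q)\bigl(\frac{\log T}{\sqrt T}+\frac{(\log T)^{3/2}}{\sqrt{TX}}\bigr)$. This is simpler than Lemma \ref{almost-periodicity} because no explicit-formula error terms appear: both $E^{(T)}$ and $E^{(T')}$ are finite trigonometric sums.

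For the outer terms, we use that for fixed $T$ the measure $\mu_X^{(T)}$ converges weakly to $\mu^{(T)}$ as $X\to\infty$. This follows from the Kronecker--Weyl theorem: $\nu_X^{(T)}\to\lambda_T$, and pushforward by the continuous map $g^{(T)}$ preserves weak convergence. All these measures are supported in the fixed compact interval $[-|\langle t_q,r_q\rangle|-2\sum_{n\le N(T')}|b_n|,\ |\langle t_q,r_q\rangle|+2\sum_{n\le N(T')}|b_n|]$, so weak convergence implies $\wass_1$ convergence by Proposition \ref{prop: wass properties} (2). Quantitatively, one could instead use Theorem \ref{Q-KW} together with Proposition \ref{prop: comp lip}, noting that $g^{(T)}$ is Lipschitz with constant $\ll(\sum|b_n|^2)^{1/2}$; but no rate is needed here. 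Letting $X\to\infty$ for fixed $T,T'$ kills the outer terms and the $X$-dependent part of the middle term, leaving
\[
\wass_1(\mu^{(T)},\mu^{(T')})\ll C(t_q)\frac{\log T}{\sqrt T},
\]
uniformly in $T'>T$. This is the Cauchy property.

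It remains to take $T'\to\infty$. The limit $\mu$ exists by completeness of $(\mathscr{P}_1(\Rr),\wass_1)$, and it has a finite first moment by definition of that space. Since $\wass_1(\mu^{(T)},\mu)=\lim_{T'}\wass_1(\mu^{(T)},\mu^{(T')})$, the bound \eqref{wass-mu-muT} follows. The only delicate point is the mean-square step: one must check that Corollary \ref{sums-zeros} (3) applies to the window $T<|\gamma|\le T'$ (it does, since it bounds the larger sum over $|\gamma|\ge T$), and handle the diagonal and near-diagonal terms where $\min(X,1/|\gamma_\chi-\gamma_\lambda|)=X$. This produces exactly the $X(\log T)^2/T$ contribution, which is what limits the rate to $\log T/\sqrt T$.
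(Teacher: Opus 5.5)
Your proof is correct. Its skeleton is the same as the paper's: a Cauchy bound obtained through finite-$X$ intermediate measures, then $X\to\infty$, completeness of $(\mathscr{P}_1(\Rr),\wass_1)$, and finally $T'\to\infty$.

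The difference is the middle link. The paper does not compare $E^{(T)}$ and $E^{(T')}$ directly. It routes through the true distribution $\mu_X$ and applies Lemma \ref{lem: trunc step W_1} twice, getting $\wass_1(\mu_X^{(T)},\mu_X^{(S)})\ll C(t_q)\bigl(\frac{\log T}{\sqrt T}+\frac{\log S}{\sqrt S}+\frac{1}{\sqrt X}\bigr)$, and then lets $X\to\infty$.

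What the paper's route buys is brevity: Lemma \ref{lem: trunc step W_1} is needed anyway for Theorem \ref{quantitative-limdist}. The cost is that it brings in the explicit formula (Lemmas \ref{explicit-formula} and \ref{relating-pi-psi}) and the restriction $X\ge\log T$.

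Your direct mean-square estimate of the finite trigonometric sum over $T<|\gamma|\le T'$ uses only Corollary \ref{sums-zeros} (3). What it buys is conceptual: it shows that the existence of $\mu$ and the rate \eqref{wass-mu-muT} are purely statements about the zeros, with no explicit-formula error terms. It also gives the slightly better constant $\lambda(t_q)\log q$ in place of $C(t_q)$.

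You also justify the passage $X\to\infty$ explicitly: weak convergence of $\mu_X^{(T)}$ to $\mu^{(T)}$ together with a uniform compact support gives $\wass_1$-convergence. The paper uses this step without comment.
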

\begin{proof}
For all $T\ge 3$ and all $X\ge \log T$, we have 
\[\wass_1\left(\mu_X^{(T)},\mu_X\right) \ll C(t_q) \left(\frac{\log T}{\sqrt{T}}+\frac{1}{\sqrt{X}}\right)\] by Lemma \ref{lem: trunc step W_1}.
By the triangle inequality, we deduce
  \[\wass_1\left(\mu_X^{(T)},\mu_X^{(S)}\right) \ll C(t_q) \left(\frac{\log T}{\sqrt{T}}+\frac{\log S}{\sqrt{S}}+\frac{1}{\sqrt{X}}\right),\]
and letting $X \to \infty$ gives that for all $T,S \ge 3$,
\begin{equation}\label{Cauchy-wass}\wass_1\left(\mu^{(T)},\mu^{(S)}\right) \ll C(t_q) \left(\frac{\log T}{\sqrt{T}}+\frac{\log S}{\sqrt{S}}\right).\end{equation} 
 Therefore, the sequence $(\mu^{(T)})_{T \ge 3}$ is Cauchy, and since the metric space $(\mathcal{P}_1(\Rr),\wass_1)$ is complete by Proposition \ref{prop: wass properties} (3), we deduce that $(\mu^{(T)})_{T \ge 3}$ converges in law to a measure $\mu$ with a finite first moment.
  Moreover, letting $S\to \infty$ in \eqref{Cauchy-wass} we deduce \begin{equation*}
    \wass_1\left(\mu^{(T)},\mu\right)\ll  C(t_q) \frac{\log T}{\sqrt{T}} \cdot
\end{equation*}
\end{proof}

Since we want to keep track of the rates of convergence in this last step, we anticipate the use of Proposition \ref{prop: comp lip} and focus for now on the determination of a Lipschitz constant for the map $g^{(T)}$.

\begin{lemma} Assume $\mathrm{GRH}$. If we endow $\left(\Ss^1\right)^{N(T, t_q)}$ with the metric $\rho$ defined in Section $\ref{sec: kronecker-weyl}$, then the function $g^{(T)}$ is a $D_{t_q}$-Lipschitz function, where \[D_{t_q} :=2 \left(\sum_\chi \sum_{\gamma_\chi }\frac{|\langle t_q,\chi \rangle|^2}{\tfrac{1}{4}+\gamma_\chi^2}\right)^{1/2}.\]
Moreover, this Lipschitz constant satisfies
\[D_{t_q} \ll \lambda(t_q) \sqrt{\log q} \leq C(t_q).\]
    
\end{lemma}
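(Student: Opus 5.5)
The plan is to estimate $|g^{(T)}(z)-g^{(T)}(z')|$ directly, pass from Euclidean chord distances to arc length, and close with Cauchy--Schwarz. The constant term $-\langle t_q,r_q\rangle$ cancels, so for $z,z'\in(\Ss^1)^{N(T,t_q)}$
\[
|g^{(T)}(z)-g^{(T)}(z')| = 2\Bigl|\Re\sum_{n=1}^{N(T,t_q)} b_n (z_n-z'_n)\Bigr| \leq 2\sum_{n} |b_n|\,|z_n-z'_n|.
\]
The chord length is bounded by the arc length, so $|z_n-z'_n|\leq \ell(z_n,z'_n)$. Cauchy--Schwarz then gives
\[
|g^{(T)}(z)-g^{(T)}(z')| \leq 2\Bigl(\sum_n |b_n|^2\Bigr)^{1/2}\rho(z,z').
\]
Since $|b_n|^2 = |\langle t_q,\chi_n\rangle|^2/(\tfrac14+\gamma_n^2)$, the sum over $n\leq N(T,t_q)$ is bounded by the full sum over all zeros with positive imaginary part. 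That in turn is at most the sum over all $\gamma_\chi$ of both signs, for all characters $\chi$ (characters outside $\supp(\widehat{t_q})$ contribute zero). This gives the Lipschitz constant $D_{t_q}$, uniformly in $T$.

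For the size of $D_{t_q}$, I would use the bound already obtained in the proof of Corollary~\ref{sums-zeros}. Summation by parts against \eqref{R-vM1} gives, for each $\chi$,
\[
\sum_{\gamma_\chi}\frac{1}{\tfrac14+\gamma_\chi^2}\ll \log q_\chi \leq \log q,
\]
uniformly in $\chi$. Hence
\[
D_{t_q}^2 \ll \log q\sum_\chi |\langle t_q,\chi\rangle|^2 \leq \log q\,\Bigl(\sum_\chi|\langle t_q,\chi\rangle|\Bigr)^2 = \lambda(t_q)^2\log q,
\]
so $D_{t_q}\ll \lambda(t_q)\sqrt{\log q}$. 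The final inequality $\lambda(t_q)\sqrt{\log q}\leq \lambda(t_q)(\log q)^2\leq C(t_q)$ holds because $q\geq 3$ gives $\log q>1$.

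The main point needing care is the uniformity in $\chi$ of the bound $\sum_{\gamma_\chi}(\tfrac14+\gamma_\chi^2)^{-1}\ll \log q$. In the summation by parts with $N(x,\chi)$, the interval $0<x\leq 1$ must be treated separately. There $N(1,\chi)\ll\log q_\chi$ by \eqref{R-vM1}, and each term of the sum is bounded by $4$. For $x\geq 1$, the estimate $N(x,\chi)\ll x\log(q_\chi x)$ combined with $|h'(x)|\ll x^{-3}$ gives a convergent integral of size $\ll\log q$. Zeros with negative imaginary part are handled by the symmetry $\gamma\mapsto-\gamma$ via $\overline{\chi}$, since $|\langle t_q,\overline\chi\rangle|=|\langle t_q,\chi\rangle|$ for real $t_q$.
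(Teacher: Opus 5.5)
Your proposal is correct and follows the paper's proof step by step. You use linearity of $\Re$, the bound of chord length by arc length, and Cauchy--Schwarz to get $2\bigl(\sum_n|b_n|^2\bigr)^{1/2}\le D_{t_q}$. You then combine the bound $\sum_{\gamma_\chi}(\tfrac14+\gamma_\chi^2)^{-1}\ll\log q$ from Corollary~\ref{sums-zeros} with $\sum_\chi|\langle t_q,\chi\rangle|^2\le\lambda(t_q)^2$. Your separate treatment of the range $0<x\le 1$ in the summation by parts is a small piece of extra care that the paper's sketch glosses over.
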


\begin{proof}   For all $z,z' \in (\Ss^1)^{N(T, t_q)}$, we have 
  \[
        |g^{(T)}(z) - g^{(T)}(z')| = 2 \left| \Re\left( \sum_{n = 1}^{N(T, t_q)} b_n z_n \right) - \Re \left(  \sum_{n = 1}^{N(T, t_q)} b_n z'_n \right) \right| 
        \leqslant 2 \sum_{n = 1}^{N(T, t_q)} |b_n| |z_n - z'_n |. 
    \]
    Now, we use the fact that the Euclidean distance $|z- z'|$ is bounded above by the Riemannian metric $\ell(z,z')$, and the Cauchy--Schwarz inequality to obtain
    \[   |g^{(T)}(z) - g^{(T)}(z')|  \leqslant 2\left( \sum_{n = 1}^{N(T, t_q)} |b_n|^2 \right)^{1/2} \left( \sum_{n = 1}^{N(T, t_q)} \ell(z_n, z'_n)^2 \right)^{1/2} = 2\left( \sum_{n = 1}^{N(T, t_q)} |b_n|^2 \right)^{1/2} \rho(z,z'). \]
    Thus, by Corollary~\ref{sums-zeros}, 
    \[ 2 \left(\sum_{n = 1}^{N(T, t_q)} |b_n|^2\right)^{1/2} = 2\left( \sum_\chi \sum_{0<\gamma_\chi \le T} \frac{|\langle t_q,\chi \rangle|^2}{\tfrac{1}{4}+\gamma_\chi^2}\right)^{1/2} \le D_{t_q} .\] To conclude, we use the fact that $\sum_{\gamma_\chi} 1/\bigl(\frac{1}{4}+\gamma_\chi^2\bigr)\ll \log q$, which is proved in Corollary \ref{sums-zeros}, to obtain $D_{t_q} \ll (\sum_\chi |\langle t_q,\chi\rangle|^2)^{1/2}\sqrt{\log q}\leq \lambda(t_q) \sqrt{\log q} \leq C(t_q)$.
    
\end{proof}

In the following Lemma, we use our stronger assumption $\mathrm{ELI_A}(t_q)$, and replace the usual Kronecker--Weyl theorem by its quantitative form, to deduce an upper bound to $\wass_1\left(\mu_X^{(T)},\mu^{(T)}\right)$.

\begin{lemma}\label{wass-muXT-muT}
Assume $\mathrm{GRH}$ and $\mathrm{ELI_A}(t_q)$ for some $A>1$. For all $T$ sufficiently large and $X\ge N(T, t_q)^{2N(T, t_q)^A}$, we have 
\[\wass_1\left(\mu_X^{(T)},\mu^{(T)}\right)\ll_A C(t_q)N(T, t_q)^{-\tfrac{1}{2}} .\]
\end{lemma}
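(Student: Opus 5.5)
Write $N:=N(T,t_q)$. The plan is to combine three ingredients: the pushforward structure, the Lipschitz bound for $g^{(T)}$, and the quantitative Kronecker--Weyl theorem (Theorem \ref{Q-KW}). Since $\mu_X^{(T)}=g^{(T)}_*\nu_X^{(T)}$ and $\mu^{(T)}=g^{(T)}_*\lambda_T$, Proposition \ref{prop: comp lip} together with the previous lemma gives
\[
\wass_1\bigl(\mu_X^{(T)},\mu^{(T)}\bigr)\leq D_{t_q}\,\wass_1\bigl(\nu_X^{(T)},\lambda_T\bigr)\ll C(t_q)\,\wass_1\bigl(\nu_X^{(T)},\lambda_T\bigr).
\]
So it suffices to show that $\wass_1(\nu_X^{(T)},\lambda_T)\ll_A N^{-1/2}$ in the stated range of $X$.

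Under $\mathrm{ELI}_A(t_q)$, for $T$ large every nonzero $m\in\Zz^N$ with $\|m\|_\infty\leq N$ satisfies $\langle m,\gamma\rangle\neq 0$. Hence there are no linear relations of height at most $N$. In any case Theorem \ref{Q-KW} is stated with the closure $\Gamma_c$, which coincides with $\Gamma^{(T)}$ and carries the measure $\lambda_T$, so we need not identify the torus. Apply Theorem \ref{Q-KW} with $x_0=\log 2$ and $H=N$. The first term is
\[
\frac{4\sqrt3\sqrt N}{N}\ll N^{-1/2}.
\]
For the second term, every $m$ with $1\leq\|m\|_\infty\leq N$ has $|\langle m,\gamma\rangle|\gg_A N^{-N^A}$ by $\mathrm{ELI}_A$, and $\|m\|_2\geq 1$. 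The number of such $m$ is at most $(2N+1)^N$. The sum is therefore
\[
\ll_A (2N+1)^N N^{2N^A},
\]
and the second term is
\[
\ll_A \frac{(2N+1)^{N/2}N^{N^A}}{X-\log 2}.
\]

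The only real point is to check that this is $\ll N^{-1/2}$ once $X\geq N^{2N^A}$. We have $(2N+1)^{N/2}N^{1/2}\leq N^{N^A}$ for $N$ large, since $A>1$ makes $N^A\gg N$, so $\tfrac N2\log(2N+1)+\tfrac12\log N=o(N^A\log N)$. Thus
\[
(2N+1)^{N/2}N^{N^A}\leq N^{2N^A}N^{-1/2}\leq X\,N^{-1/2},
\]
and $X-\log 2\asymp X$. This is where the exponent $2$ in the hypothesis on $X$ is used, and also where "$T$ sufficiently large" is needed: so that $N$ is large and $\mathrm{ELI}_A$ applies. Combining the two terms gives $\wass_1(\nu_X^{(T)},\lambda_T)\ll_A N^{-1/2}$, and hence the lemma.

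The main obstacle is only bookkeeping. One must make sure that the cutoff $H=N$ matches the coefficient bound $|m_j|\leq N$ in $\mathrm{ELI}_A$, and that the crude count of lattice points is absorbed by the margin between $N^{2N^A}$ and $N^{N^A}$.
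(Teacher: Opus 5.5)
Your proposal is correct and follows essentially the same route as the paper. You apply Theorem \ref{Q-KW} with $H=N(T,t_q)$ and $x_0=\log 2$, and use $\mathrm{ELI}_A(t_q)$ to bound each $1/|\langle m,\gamma^{(T)}\rangle|$ by $N^{N^A}$. You then absorb the lattice-point count using $X\geq N^{2N^A}$ and $A>1$, and transfer the bound through Proposition \ref{prop: comp lip} with Lipschitz constant $D_{t_q}\ll C(t_q)$. The only cosmetic difference is that you keep the count $(2N+1)^N$ explicit, whereas the paper absorbs it into $N^{N^A}$ to get the intermediate bound $N^{3N^A}$.
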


\begin{proof}
    Let $T>0$ be sufficiently large so that $N(T, t_q) \geq 1$ by Lemma \ref{lem: N(T,t_q)-expression}.
    Denote by $\gamma^{(T)}$ the element $(\gamma_1, \dots, \gamma_{N(T, t_q)})$ of $(\Rr_{>0})^{N(T, t_q)}$, where the $\gamma_i$'s are the positive imaginary parts of the $L(s, \chi)$ with $\chi$ being the Dirichlet characters modulo $q$ such that $\langle t_q, \chi \rangle \neq 0$. The $\mathrm{ELI}_A(t_q)$ hypothesis (Conjecture \ref{conj: ELI_A} with $\mathcal{X}_q = \mathrm{supp}(\widehat{t_q})$) implies that for all $m:=(m_j)_{1 \le j \le N(T, t_q)} \in \Zz^{N(T, t_q)}\setminus\{0\}$ with $\|m\|_\infty \le N(T, t_q)$, we have \[ \frac{1}{|\langle m, \gamma^{(T)}\rangle |} = \frac{1}{|\sum_{j = 1}^{N(T, t_q)} m_j \gamma_j |} \ll_A N(T, t_q)^{N(T, t_q)^A}.\]
    Thus
    \[  \sum_{\substack{ \|m\|_\infty \le N(T, t_q) \\ m \ne 0}}\frac{1}{\|m\|_2^{2}\cdot |\langle m, \gamma^{(T)}\rangle|^2}\ll_A N(T, t_q)^{3N(T, t_q)^A},\]
    where we bounded trivially each $1/ \|m\|_2^2$ by $1$ and used the fact that the sum contains $ O\left((2N(T, t_q))^{N(T, t_q)}\right) \ll_A N(T, t_q)^{N(T, t_q)^A}$ terms (we do not need to be more precise here since optimizing this bound would gain at most a small power of $\log \log X$ in the proof of Theorem \ref{quantitative-limdist} below). Using Theorem~\ref{Q-KW} with $H=N(T, t_q)$, we deduce that for all $X\ge N(T, t_q)^{2N(T, t_q)^A}$
    \begin{align*} \wass_1\left(\nu_X^{(T)},\lambda_T\right) &\ll_A \frac{\sqrt{N(T, t_q)}}{N(T, t_q)}+\frac{N(T,t_q)^{\frac{3}{2}N(T, t_q)^A}}{X} \\
    & \ll_A N(T, t_q)^{-\tfrac{1}{2}} ,
    \end{align*}
    where we used the inequality $$\frac{N(T,t_q)^{\frac{3}{2}N(T, t_q)^A}}{X}\le N(T,t_q)^{-\tfrac{1}{2}N(T,t_q)^A}\le N(T, t_q)^{-\tfrac{1}{2}} .$$
    Using Proposition~\ref{prop: comp lip} we deduce that \[ \wass_1\left(\mu_X^{(T)},\mu^{(T)}\right)\ll_A D_{t_q}N(T, t_q)^{-\tfrac{1}{2}} \ll_A C(t_q)N(T, t_q)^{-\tfrac{1}{2}}  .\]
    The result follows.
\end{proof}


We now prove the main theorem of this section which implies in particular that $\mu$ is in fact the limiting distribution of $E$. 

\begin{theorem}\label{quantitative-limdist}
    Assume $\mathrm{GRH}$ and $\mathrm{ELI_A}(t_q)$ for some $A>1$. Let $q\ge 3$ and let $$X\ge (|\supp(\hat{t_q})| \log k(t_q) )^{(\mathcal{L} |\supp(\hat{t_q})| \log k(t_q)) ^A}, $$
    where $\mathcal{L}>0$ is an absolute effective constant. We have 
    \[ \wass_1 \bigl(\mu,\mu_X\bigr) \ll_A C(t_q)^{3/2} (\log C(t_q))^{\tfrac{1}{2A}} (\log \log X)^2(\log X)^{-\tfrac{1}{2A}} \, .\]
\end{theorem}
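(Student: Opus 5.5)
The plan is to use the triangle inequality for $\wass_1$ along the chain $\mu_X \to \mu_X^{(T)} \to \mu^{(T)} \to \mu$, where $T$ is a parameter to be chosen in terms of $X$:
\[
\wass_1(\mu,\mu_X)\le \wass_1\bigl(\mu_X,\mu_X^{(T)}\bigr)+\wass_1\bigl(\mu_X^{(T)},\mu^{(T)}\bigr)+\wass_1\bigl(\mu^{(T)},\mu\bigr).
\]
Each term is already controlled.
\begin{itemize}
\item By Lemma~\ref{lem: trunc step W_1}, valid when $X\ge\log T$, the first term is $\ll C(t_q)\bigl(\frac{\log T}{\sqrt T}+\frac{1}{\sqrt X}\bigr)$.
\item By Lemma~\ref{wass-muXT-muT}, valid when $X\ge N(T,t_q)^{2N(T,t_q)^A}$ and $T$ is large enough for $\mathrm{ELI}_A$ to apply, the second term is $\ll_A C(t_q)N(T,t_q)^{-1/2}$.
\item By Lemma~\ref{wass-almostperiodicity}, the third term is $\ll C(t_q)\frac{\log T}{\sqrt T}$.
\end{itemize}
Since $N(T,t_q)\gg T$ by Lemma~\ref{lem: N(T,t_q)-expression}, the second term is absorbed, and the total is $\ll_A C(t_q)\bigl(\frac{\log T}{\sqrt T}+X^{-1/2}\bigr)$.

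The remaining work is to choose $T$ as large as the constraint $X\ge N(T,t_q)^{2N(T,t_q)^A}$ allows. Write $s=|\supp(\hat{t_q})|$ and $k=k(t_q)$. Lemma~\ref{lem: N(T,t_q)-expression} gives $N(T,t_q)\le c\,sT\log(kT)$ for $T\ge T_0$. Taking logarithms twice, the constraint becomes roughly
\[
A\log N+\log\log N+\log 2\le \log\log X .
\]
So it suffices to take $T$ with $sT\log(kT)\asymp (\log X/\log\log X)^{1/A}$, that is,
\[
T\asymp \frac{(\log X)^{1/A}}{s\,(\log\log X)^{1/A}\,\log(k\log X)}.
\]
This $T$ must also be at least the threshold beyond which Lemma~\ref{lem: N(T,t_q)-expression} gives $N\asymp sT\log(kT)$ and $\mathrm{ELI}_A$ is effective. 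That requirement is exactly the hypothesis $X\ge (s\log k)^{(\mathcal L s\log k)^A}$: it makes $(\log X)^{1/A}\ge \mathcal L\, s\log k\,\log(s\log k)$, so that $T\gg \log k\ge 1$ with room to spare once $\mathcal L$ is large. The constraint $X\ge\log T$ is then trivial.

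With this $T$,
\[
\frac{\log T}{\sqrt T}\ll (\log\log X)\,\bigl(s\log(k\log X)\bigr)^{1/2}(\log\log X)^{1/(2A)}(\log X)^{-1/(2A)}.
\]
The step I expect to be fiddliest is converting the factors $s$ and $\log k$ into powers of $C(t_q)$ and $\log C(t_q)$, so that the dependence on $q$ comes out as $C(t_q)^{3/2}(\log C(t_q))^{1/(2A)}$. For this I would use $\log k\le \log q$ and the bound $s\le \varphi(q)$. More usefully, Parseval together with $\|\widehat{t_q}\|_\infty\le\|t_q\|_\infty\le\lambda(t_q)$ gives control of $s$ in terms of $\lambda(t_q)$, hence $s\log k\ll C(t_q)$, and this accounts for the extra factor $C(t_q)^{1/2}$. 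The factor $\log(k\log X)\le \log k+\log\log X$ is split so that one part is absorbed into $(\log\log X)^{2}$ and the other into the $C(t_q)$ power. The residual $(\log C(t_q))^{1/(2A)}$ comes from the $\log N$ loss in the ELI constraint, which is where the $(\log\log X)^{1/A}$ and $\log(s\log k)$ corrections enter.

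If the relation between $s$ and $C(t_q)$ turns out weaker than needed, I would fall back to stating the bound with $s\log k$ explicitly. For the two-class race $t_q=\varphi(q)(\ind_{\{a\}}-\ind_{\{b\}})$ one has $C(t_q)\asymp\varphi(q)(\log q)^2$, which dominates $s\log k$, so this fallback would still be enough for the later corollaries.
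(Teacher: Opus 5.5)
Your route is the paper's. You use the same three-term triangle inequality through $\mu_X^{(T)}$ and $\mu^{(T)}$ with Lemmas~\ref{lem: trunc step W_1}, \ref{wass-muXT-muT} and \ref{wass-almostperiodicity}, and the same absorption of $N(T,t_q)^{-1/2}$ and $X^{-1/2}$ into $\log T/\sqrt{T}$. Your choice of $T$ is essentially the paper's: it sets $\mathcal N=sT\log(kT)$ and takes $X=\mathcal N^{\mathcal N^A}$, i.e.\ $\mathcal N^A\log\mathcal N=\log X$, which is your $\mathcal N^A\asymp\log X/\log\log X$.

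The step that fails as you justify it is $s\log k\ll C(t_q)$. Parseval and $\|\widehat{t_q}\|_\infty\le\|t_q\|_\infty\le\lambda(t_q)$ only bound the Fourier coefficients from above. All you get is $s\le\lambda(t_q)/\min_{\chi\in\supp(\widehat{t_q})}|\widehat{t_q}(\chi)|$, so $s$ is not controlled by $\lambda(t_q)$ without a lower bound on the nonzero coefficients. In fact, replacing $t_q$ by $\varepsilon t_q$ leaves $s$, $k(t_q)$, the hypothesis on $X$ and $\mathrm{ELI}_A(t_q)$ unchanged, and multiplies $\wass_1(\mu,\mu_X)$ by $\varepsilon$. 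The right-hand side of the statement, however, gets multiplied by about $\varepsilon^{3/2}$, so the statement tacitly needs such a normalization.

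The paper uses $s\log k\ll C(t_q)$ without comment, in the step $\log X\ll C(t_q)^AT^A(\log T)^A\log(C(t_q)T)$. It does hold in all the applications: $s=\lambda(t_q)$ when $\widehat{t_q}\in\{-1,0,1\}$, and $s\le\varphi(q)\le 3|M_q|\le 3\lambda(t_q)$ for the two-class race.

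Your fallback is the correct general form. Keeping $s$ and $k$ explicit, and using $\log(k\log X)\le 2\log k\,\log\log X$ and $\tfrac32+\tfrac1{2A}<2$, your computation gives
\[
\wass_1(\mu,\mu_X)\ll_A C(t_q)\bigl(|\supp(\widehat{t_q})|\log k(t_q)\bigr)^{1/2}(\log\log X)^2(\log X)^{-\frac{1}{2A}}.
\]
This bound is homogeneous of degree one in $t_q$. It yields the stated bound whenever $s\log k\ll C(t_q)$ and $\log C(t_q)\gg 1$; the paper's own inversion step also uses the latter.
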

\begin{proof} 
Recall that $N(T, t_q) \underset{T \to \infty}{\sim}  \frac{|\supp(\hat{t_q})|T}{2\pi} \log(k(t_q)T)$ uniformly for $T \geq 1$ by Lemma \ref{lem: N(T,t_q)-expression}. Denote by
\[
\tp := |\supp(\hat{t_q})|T \log(k(t_q)T).
\]
Let $X$ be as in the statement and let $T >0$ be such that 
\[
X = \tp^{ \tp^A}.
\]
A large enough (but absolute) choice of $\mathcal{L}$ implies that $T$ is sufficiently large to have $1 \le N(T,t_q)^A \le \tp^A/2$. Hence \[
X^{-1/2}\le  N(T,t_q)^{-1/2} \ll \frac{1}{\sqrt{T}} \ll \frac{\log T}{\sqrt{T}}\quad \text{and}\quad X\ge N(T, t_q)^{2N(T,t_q)^A}.
\]
Using the triangle inequality and Lemmas \ref{lem: trunc step W_1}, \ref{wass-almostperiodicity}, and \ref{wass-muXT-muT}  we obtain: 
    \begin{align*}
        \wass_1(\mu,\mu_X)&\le \wass_1\bigl(\mu, \mu^{(T)}\bigr)+\wass_1\bigl(\mu^{(T)},\mu_X^{(T)}\bigr)+\wass_1\bigl(\mu_X^{(T)},\mu_X\bigr) \\
        &\ll_A C(t_q) \left( \frac{\log T}{\sqrt{T}}+ N(T, t_q)^{-\tfrac{1}{2}}+X^{-\tfrac{1}{2}}\right)\\
        &\ll_A C(t_q) \frac{\log T}{\sqrt{T}}
    \end{align*} 
Moreover, we have
\begin{align*}
\log X = \tp^A \log \tp &\ll C(t_q)^A T^A (\log T)^{A} \log(C(t_q) T) \\ &\ll C(t_q)^A T^A (\log C(t_q)) (\log T)^{2A},
\end{align*}
from which we deduce the following upper bound:
\[\frac{1}{\sqrt{T} \log T} \ll \sqrt{C(t_q)} (\log C(t_q))^{\tfrac{1}{2A}}(\log X)^{-\tfrac{1}{2A}}.\]
Thus,
\begin{align*}
    \frac{\log T}{\sqrt{T}} \ll (\log T)^2 \sqrt{C(t_q)} (\log C(t_q))^{\tfrac{1}{2A}} (\log X)^{-\tfrac{1}{2A}}.
\end{align*}
Finally,
\[
\log T \leqslant \log\left( |\mathrm{supp}(\widehat{t_q})| T \right) \ll \log \tp \ll_A \log \log X,
\]
hence the result.
\end{proof}

\subsection{The logarithmic density of Chebyshev's bias} \label{sec:Lipschitz}
Following the work of Rubinstein and Sarnak, and assuming $\mathrm{GRH}$ and LI, the Fourier transform of $\mu$ can be expressed as: 
\[ \widehat{\mu}(\xi)=\exp\left(-i\langle t_q,r_q \rangle\xi\right) \prod_{\chi \ne \chi_0} \prod_{\gamma_\chi>0} \J\left(\frac{2|\langle t_q,\chi\rangle \xi|}{\sqrt{\tfrac{1}{4}+\gamma_\chi^2}}\right),\]
where $\J$ is the Bessel function of the first kind. Using this expression, we see that $\widehat{\mu}$ decays rapidly at $\infty$ which shows that $\mu$ is absolutely continuous with a real-analytic density that we will denote $f_{t_q}$. This proves that 
\[ \delta=\delta_{t_q}= \lim_{X \to \infty} \frac{1}{X-\log 2}\int_{\log 2}^X \mathds{1}_{(0,\infty)}(E(y)) \mathrm{d}y=\mu(0,\infty) \]
exists and satisfies $0<\delta<1$. We now estimate the rate of this convergence. 
Let $L>0$, define $h_L^{\pm}:\Rr\to \Rr$ for $x\in \Rr$ by 
\[h_L^+(x)=\begin{cases}
0\qquad \qquad\  \text{ if }x\le -1/L\\
L(x+1/L) \text{ if } -1/L \le x \le 0\\
1\qquad \qquad\    \text{ if } x\ge 0
\end{cases}
\text{ and } h_L^-(x)=\begin{cases}
0\ \ \text{ if }x\le 0\\
Lx \text{ if } 0 \le x \le 1/L\\
1\ \  \text{ if } x\ge 1/L
\end{cases}. \]
\begin{lemma}\label{Lipschitz-approximation}
Let $\nu$ be a probability measure on $\Rr$ with density $f$, and assume that $f\in L^\infty (\Rr)$. We have for all $L>0$
\[ \left|\nu(0,\infty)-\int_\Rr h_L^{\pm}(x) \mathrm{d}\nu(x)\right| \le \frac{\|f\|_\infty}{2L}\, . \]
\end{lemma}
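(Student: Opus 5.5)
The plan is to compare $\nu(0,\infty)$ with $\int h_L^{\pm}\,\mathrm{d}\nu$ directly. The difference is the integral of $\ind_{(0,\infty)} - h_L^{\pm}$ against the density $f$, and this integrand vanishes outside an interval of length $1/L$.

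We treat $h_L^-$ first. Since $h_L^-$ coincides with $\ind_{(0,\infty)}$ outside $[0,1/L]$, we write
\[
\nu(0,\infty)-\int_\Rr h_L^-\,\mathrm{d}\nu=\int_0^{1/L}(1-Lx)f(x)\,\mathrm{d}x .
\]
The integrand has constant sign, because $1-Lx\in[0,1]$ on $[0,1/L]$ and $f\geq 0$. Its absolute value is therefore at most
\[
\|f\|_\infty\int_0^{1/L}(1-Lx)\,\mathrm{d}x=\frac{\|f\|_\infty}{2L},
\]
since the integral of the linear function $1-Lx$ over $[0,1/L]$ is the area of a triangle with base $1/L$ and height $1$.

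For $h_L^+$, the function agrees with $\ind_{(0,\infty)}$ outside $[-1/L,0]$. Note that $h_L^+(0)=1=\ind_{(0,\infty)}(0)+1$ at the single point $0$, but a point has $\nu$-measure zero because $\nu$ is absolutely continuous. Hence
\[
\int_\Rr h_L^+\,\mathrm{d}\nu-\nu(0,\infty)=\int_{-1/L}^0 L\bigl(x+1/L\bigr)f(x)\,\mathrm{d}x ,
\]
and the same triangle-area computation bounds this by $\|f\|_\infty/(2L)$.

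There is no genuine obstacle in this argument. The only points needing care are the boundary value at $0$, which is harmless because $\nu$ has a density, and the observation that the bound holds for both signs simultaneously. In later use one also has the sandwich $\int h_L^-\,\mathrm{d}\nu\leq\nu(0,\infty)\leq\int h_L^+\,\mathrm{d}\nu$, which the same computation exhibits.
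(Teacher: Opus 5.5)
Your proposal is correct and follows the same approach as the paper: write the difference as the integral of the linear ramp against $f$ over an interval of length $1/L$, then bound it by $\|f\|_\infty$ times the triangle area $1/(2L)$. You also treat $h_L^-$ explicitly and check that the point $0$ has $\nu$-measure zero, whereas the paper handles only $h_L^+$ and passes over that point.
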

\begin{proof}
    We prove the Lemma for $h_L^+$, the same argument applies for $h_L^-$. Let $L>0$ we have 
    \begin{align*} \left| \nu(0,\infty)-\int_\Rr h_L^+(x) \mathrm{d}\nu(x)\right|&=\left | \int_{-1/L}^0 L(x+1/L)f(x) \mathrm{d}x\right|\\
    &\le \|f\|_\infty \left[ \frac{L}{2}(x+1/L)^2 \right]_{-1/L}^0=\frac{\|f\|_\infty}{2L} .
    \end{align*}
\end{proof}
\begin{lemma} \label{density-wass}
Let $\nu,\nu'\in \mathcal{P}_1(\Rr)$ and assume that $\nu$ has a density $f \in L^{\infty}(\Rr)$. Then we have 
\[  |\nu(0,\infty)-\nu'(0,\infty)|\le 2\left( \|f\|_\infty \wass_1(\nu,\nu') \right)^{1/2}. \]
\end{lemma}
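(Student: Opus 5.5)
We sandwich the indicator of $(0,\infty)$ between the Lipschitz functions $h_L^{\pm}$ of Lemma \ref{Lipschitz-approximation}. We then use the Kantorovich--Rubinstein duality (Theorem \ref{th: kantorovich-rubinstein}) to compare $\nu$ and $\nu'$ on these functions, and finally optimize over $L$. The crucial point is that the density hypothesis is used only for $\nu$. The measure $\nu'$ may be arbitrary, for instance an empirical measure $\mu_X$ with atoms, so for $\nu'$ we rely only on monotonicity.

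\textbf{Step 1 (sandwich).} Pointwise we have $h_L^-\le \ind_{(0,\infty)}\le h_L^+$. Integrating against $\nu'$ gives
\[
\int h_L^-\,\mathrm{d}\nu' \le \nu'(0,\infty)\le \int h_L^+\,\mathrm{d}\nu'.
\]

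\textbf{Step 2 (duality).} Each $h_L^\pm$ is bounded and $L$-Lipschitz, so $h_L^\pm/L$ is $1$-Lipschitz. By Theorem \ref{th: kantorovich-rubinstein},
\[
\Bigl|\int h_L^\pm\,\mathrm{d}\nu'-\int h_L^\pm\,\mathrm{d}\nu\Bigr|\le L\,\wass_1(\nu,\nu').
\]

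\textbf{Step 3 (density bound).} Since $\nu$ has density $f\in L^\infty$, Lemma \ref{Lipschitz-approximation} gives
\[
\Bigl|\int h_L^\pm\,\mathrm{d}\nu-\nu(0,\infty)\Bigr|\le \frac{\|f\|_\infty}{2L}.
\]

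\textbf{Step 4 (combine).} Chaining the three steps yields the two-sided bound
\[
\nu(0,\infty)-\frac{\|f\|_\infty}{2L}-L\wass_1\le \nu'(0,\infty)\le \nu(0,\infty)+\frac{\|f\|_\infty}{2L}+L\wass_1,
\]
that is,
\[
|\nu(0,\infty)-\nu'(0,\infty)|\le \frac{\|f\|_\infty}{2L}+L\,\wass_1(\nu,\nu').
\]

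\textbf{Step 5 (optimize $L$).} Choose $L=\bigl(\|f\|_\infty/(2\wass_1)\bigr)^{1/2}$. This gives the bound
\[
2\Bigl(\tfrac{\|f\|_\infty}{2}\wass_1\Bigr)^{1/2}=\sqrt{2}\bigl(\|f\|_\infty\wass_1\bigr)^{1/2}\le 2\bigl(\|f\|_\infty\wass_1\bigr)^{1/2}.
\]

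\textbf{Degenerate cases.} If $\wass_1(\nu,\nu')=0$, then $\nu=\nu'$ and the claim is trivial. If $\|f\|_\infty=0$, this is impossible, since $f$ is the density of a probability measure.

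\textbf{Main subtlety.} The only point needing care is the one already built into the plan: Lemma \ref{Lipschitz-approximation} must be applied only to $\nu$. The bound on the $\nu'$ side has to come from the monotone sandwich in Step 1, since $\nu'$ need not have a density and the boundary point $0$ may carry mass under $\nu'$.
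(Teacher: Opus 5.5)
Your proof is correct and follows the paper's argument step by step: you sandwich $\ind_{(0,\infty)}$ between $h_L^-$ and $h_L^+$, pass from $\nu'$ to $\nu$ via Kantorovich--Rubinstein at cost $L\,\wass_1(\nu,\nu')$, apply Lemma~\ref{Lipschitz-approximation} only on the $\nu$ side, and optimize in $L$. The differences are cosmetic: the paper takes $L=(\|f\|_\infty/\wass_1(\nu,\nu'))^{1/2}$, which gives the constant $3/2$, whereas you take the exact optimizer and get $\sqrt{2}$, and you also handle the case $\wass_1(\nu,\nu')=0$, which the paper's choice of $L$ silently excludes.
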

\begin{proof} By definition of $h_L^{+}$ we have
 \[\nu'(0,\infty)\le \int_\Rr h_L^+ \mathrm{d}\nu'.\]
    Moreover, since $h_L^{+}$ is $L$-Lipschitz, Theorem \ref{th: kantorovich-rubinstein} implies that
    \[ \int_\Rr h_L^+ \mathrm{d}\nu' - \int_\Rr h_L^+\mathrm{d}\nu \le L \wass_1(\nu,\nu') .\]
    Hence \[ \nu'(0, \infty) \le L \wass_1(\nu, \nu') + \int_\Rr h_L^+\mathrm{d}\nu.\]
    Thanks to Lemma~\ref{Lipschitz-approximation} this implies the following inequality~: 
    \[\nu'(0,\infty)-\nu(0,\infty)\le L\wass_1(\nu,\nu')+\frac{\|f\|_\infty}{2L}.\]
    Similarly, using $h_L^-$, we deduce that 
    \[ \nu(0,\infty)-\nu'(0,\infty) \le L \wass_1(\nu,\nu')+\frac{\|f\|_\infty}{2L}\, .\]
    The result now follows from taking $L=(\|f\|_\infty /\wass_1(\nu,\nu'))^{1/2}$.
\end{proof}

In the case of our limiting distribution $\mu$, since $\widehat{\mu}\in L^1(\Rr)$, by the Fourier inversion formula we deduce that $f_{t_q}\in L^\infty(\Rr)$ and 
\[ \|f_{t_q}\|_\infty \le \frac{1}{2\pi}\left \| \widehat{\mu} \right\|_1 .\]
It remains to estimate the size of $\left \| \widehat{\mu} \right\|_1$ in terms of $q$ and $t_q$.

Define, for all $\xi\in \Rr$, \[ F(\xi,\chi)=\prod_{\gamma_\chi>0}\J\left(\frac{2|\xi|}{\sqrt{\tfrac{1}{4}+\gamma_\chi^2}}\right)\]
so that \[ \widehat{\mu}(\xi)=\exp\bigl(-i\langle t_q,r_q\rangle \xi\bigr)\prod_{\chi \ne \chi_0}F\left( \widehat{t_q}(\chi) \xi ,\chi\right).\]
It turns out that under some assumptions on $\widehat{t_q}$, the size of $\left \| \widehat{\mu} \right\|_1$ is uniformly bounded:
\begin{lemma}
Assume $\mathrm{GRH}$ and $\mathrm{LI}$. Let $\varepsilon>0$ and assume that $\|\widehat{t_q}\|_\infty \ge \varepsilon$, we have $\left \| \widehat{\mu} \right\|_1\ll_\varepsilon 1$.
\end{lemma}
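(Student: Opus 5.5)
Since $|\exp(-i\langle t_q,r_q\rangle\xi)|=1$ and $|\J(x)|\le 1$ for real $x$, the plan is to drop every factor of $\widehat{\mu}$ except the one attached to a single character $\chi_1$ for which $|\widehat{t_q}(\chi_1)|=\|\widehat{t_q}\|_\infty\ge\varepsilon$. This gives the pointwise bound
\[ |\widehat{\mu}(\xi)|\le |F(\widehat{t_q}(\chi_1)\xi,\chi_1)| = \prod_{\gamma_{\chi_1}>0}\Bigl|\J\Bigl(\tfrac{2|\widehat{t_q}(\chi_1)\xi|}{\sqrt{1/4+\gamma_{\chi_1}^2}}\Bigr)\Bigr|. \]
After the substitution $u=|\widehat{t_q}(\chi_1)|\xi$, we get
\[ \|\widehat{\mu}\|_1\le \varepsilon^{-1}\int_\Rr |F(u,\chi_1)|\,\mathrm{d}u. \]
So it suffices to show that $\int_\Rr|F(u,\chi)|\,\mathrm{d}u\ll 1$ uniformly over all non-principal Dirichlet characters $\chi$ of all moduli. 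The dependence on $q$ must disappear entirely, since the claimed bound depends only on $\varepsilon$.

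For the uniform bound I use two standard facts about $\J$. First, $|\J(x)|\le 1$ for all real $x$. Second, there is an absolute $c>0$ with $|\J(x)|\le \exp(-c x^2)$ for $|x|\le 1$, say; this follows from $\J(x)=1-x^2/4+O(x^4)$ on a compact range. For $|u|\ge 1$, I keep only the zeros with $\gamma_\chi\in(|u|,2|u|]$, say, together with the neighbouring ranges. For these zeros the Bessel argument $2|u|/\sqrt{1/4+\gamma_\chi^2}$ is of size about $1$, bounded above and below by absolute constants. On that range $|\J|\le \kappa<1$ for an absolute $\kappa$, because $\J$ has modulus strictly less than $1$ away from $0$ on a compact interval. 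Each such zero therefore contributes a factor at most $\kappa$.

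Dropping every other factor gives $|F(u,\chi)|\le \kappa^{N(2|u|,\chi)-N(|u|,\chi)}$. By the Riemann--von Mangoldt formula \eqref{R-vM1}, $N(2U,\chi)-N(U,\chi)\ge c'U\log(q_\chi U)-O(\log q_\chi U)\ge c''U$ for $U$ larger than an absolute constant $U_0$, uniformly in $q_\chi\ge 3$. This holds because the main term $\frac{U}{2\pi}\log\frac{4q_\chi U}{2\pi e}$ dominates the error once $U$ is large, uniformly in the conductor. It follows that $\int_{|u|\ge U_0}|F(u,\chi)|\,\mathrm{d}u\le\int \kappa^{c''|u|}\,\mathrm{d}u\ll 1$. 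The range $|u|\le U_0$ contributes at most $2U_0$ by the trivial bound $|F|\le 1$.

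The main obstacle is the uniformity of the Riemann--von Mangoldt lower bound in short-ish dyadic windows. The error term $O(\log q_\chi T)$ must be beaten by the main term for $U\ge U_0$ with $U_0$ independent of $q$. If the constants in \eqref{R-vM1} make this delicate for small $U$ and large $q_\chi$, I would first try a longer window $(|u|,K|u|]$ with $K$ a large absolute constant. The main term grows like $(K-1)U\log(q_\chi U)$ while the error stays $O(\log(q_\chi KU))$, so for $U\ge U_0(K)$ the count is $\gg U$ uniformly in $q_\chi$. The Bessel arguments still stay in a compact set $[c_K,2]$ away from $0$, so the bound $|\J|\le\kappa_K<1$ still applies. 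LI is used only in the derivation of the product formula for $\widehat{\mu}$ assumed above. GRH is used to place the zeros on the critical line, so that the Bessel arguments are real.
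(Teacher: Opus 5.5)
Your proposal is correct and follows the paper's strategy: bound $|\widehat{\mu}|$ by the Bessel factors attached to one non-principal character $\psi$ with $|\widehat{t_q}(\psi)|=\|\widehat{t_q}\|_\infty\ge\varepsilon$, bound every other factor by $1$, and use an exponential decay of $F(\cdot,\chi)$ that is uniform in $\chi$. The only difference is that the paper imports this decay from \cite{FiM}*{Lemma 2.16} in the form $|F(x,\chi)F(x,\overline{\chi})|\le e^{-c|x|}$ for $|x|\ge 200$, which is why it splits into real and non-real $\psi$. You instead derive decay for a single factor directly from the Riemann--von Mangoldt count in the window $(|u|,2|u|]$, where the Bessel arguments lie in $[2/\sqrt{5},2)$ so $|\J|$ stays bounded away from $1$; this is self-contained, and your check that the count is uniform in the conductor is correct.
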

\begin{proof}
By \cite{FiM}*{Lemma 2.16} there exists an absolute constant $c>0$ such that for all $|x|\ge 200$ and for all $\chi\ne \chi_0$ we have $|F(x,\chi)F(x,\overline{\chi})|\le \exp(-c|x|)$. Let $\psi$ be such that $\|\widehat{t_q}\|_\infty=|\langle t_q,\psi \rangle|$. Note that $\psi \neq \chi_0$ since we assumed $\langle t_q, \chi_0 \rangle = 0$. For all $|x|\ge \frac{200}{\varepsilon}$ we have $|F(x\langle t_q,\psi \rangle, \psi)F(x\langle t_q,\overline{\psi} \rangle, \overline{\psi})|\le \exp(-c\varepsilon |x|) $. Two cases arise: if $\psi$ is real, then $$|F(x\langle t_q,\psi \rangle, \psi)| \le \exp(-c\varepsilon |x|/2).$$
We use the fact that $|\J(\xi)| \le 1$ for all $\xi \in \Rr$ to deduce that in this case we have, for all $x\ge \frac{200}{\varepsilon}$, $$|\widehat{\mu}(x)|\le \exp\left(-\frac{c\varepsilon}{2}|x|\right)\, .$$
Otherwise, if $\psi\ne \overline{\psi}$, we deduce similarly that for all $|x|\ge \frac{200}{\varepsilon}$ we have $$|\widehat{\mu}(x)|\le \exp\left(- c\varepsilon|x|\right)\, .$$
Hence \[ \left\|\widehat{\mu} \right\|_1 \le \frac{400}{\varepsilon}+\int_{|x|\ge 200/\varepsilon}\exp\left(-\frac{c\varepsilon}{2}|x|\right)\mathrm{d}x \ll_\varepsilon 1 .\]

\end{proof}

\begin{theorem}\label{density-convergence}
Assume $\mathrm{GRH}$ and $\mathrm{ELI_A}(t_q)$ for some $A>1$. Let $q\ge 3$ and assume that there exist $\varepsilon>0$ such that $\|\widehat{t_q}\|_\infty\ge \varepsilon$. Then, uniformly for $$X\ge (|\supp(\hat{t_q})| \log k(t_q) )^{(\mathcal{L} |\supp(\hat{t_q})| \log k(t_q)) ^A}, $$ 
where $\mathcal{L}>0$ is a sufficiently large constant, we have
    \[\left| \delta_{t_q} -\frac{1}{X-\log 2}\int_{\log 2}^X \mathds{1}_{(0,\infty)}\left(E(y)\right)\mathrm{d}y \right| \ll_{A,\varepsilon}  C(t_q)^{\tfrac{3}{4}}  \log \bigl(C(t_q) \bigr)^{\tfrac{1}{4A}}   (\log \log X) (\log X) ^{-\tfrac{1}{4A}}  .\]
\end{theorem}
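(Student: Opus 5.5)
The plan is to combine Theorem \ref{quantitative-limdist} with Lemma \ref{density-wass}. By definition of $\mu_X$ as the pushforward of normalized Lebesgue measure on $[\log 2, X]$ by $E$, we have
\[
\frac{1}{X-\log 2}\int_{\log 2}^X \mathds{1}_{(0,\infty)}(E(y))\,\mathrm{d}y=\mu_X(0,\infty),
\]
and, as recalled at the start of \S\ref{sec:Lipschitz}, $\delta_{t_q}=\mu(0,\infty)$. So the quantity to bound is $|\mu(0,\infty)-\mu_X(0,\infty)|$. Both $\mu$ and $\mu_X$ lie in $\mathcal{P}_1(\Rr)$: $\mu$ by Lemma \ref{wass-almostperiodicity}, and $\mu_X$ because $E$ is bounded on $[\log 2,X]$. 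We apply Lemma \ref{density-wass} with $\nu=\mu$ and $\nu'=\mu_X$, which requires $\mu$ to have a bounded density.

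Since $\mathrm{ELI}_A(t_q)$ in particular forbids nontrivial integer relations (among the $\gamma_j$ with $\gamma_j\le T$ and coefficients up to $N(T)$, for every large $T$, hence among all of them), it implies LI for the characters in $\supp(\widehat{t_q})$. LI for the other characters is not needed, since only those zeros enter $E$. Hence the Rubinstein--Sarnak product formula for $\widehat{\mu}$ holds. The preceding lemma, using $\|\widehat{t_q}\|_\infty\ge\varepsilon$, then gives $\|\widehat{\mu}\|_1\ll_\varepsilon 1$. By Fourier inversion, $\mu$ has density $f_{t_q}$ with $\|f_{t_q}\|_\infty\le \frac{1}{2\pi}\|\widehat{\mu}\|_1\ll_\varepsilon 1$.

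Lemma \ref{density-wass} therefore gives
\[
|\delta_{t_q}-\mu_X(0,\infty)|\le 2\bigl(\|f_{t_q}\|_\infty\,\wass_1(\mu,\mu_X)\bigr)^{1/2}\ll_\varepsilon \wass_1(\mu,\mu_X)^{1/2}.
\]
For $X$ in the stated range, Theorem \ref{quantitative-limdist} applies with the same constant $\mathcal{L}$ and gives
\[
\wass_1(\mu,\mu_X)\ll_A C(t_q)^{3/2}(\log C(t_q))^{1/(2A)}(\log\log X)^2(\log X)^{-1/(2A)}.
\]
Taking square roots yields exactly $C(t_q)^{3/4}(\log C(t_q))^{1/(4A)}(\log\log X)(\log X)^{-1/(4A)}$, with implied constant depending on $A$ and $\varepsilon$.

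The only delicate point is the justification that the hypotheses give the bounded-density input, namely that $\mathrm{ELI}_A(t_q)$ supplies the LI needed for the product formula and the Bessel decay lemma. Everything else is a direct chaining of earlier results.
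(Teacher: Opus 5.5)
Your proposal is correct and follows the paper's argument: apply Lemma \ref{density-wass} with $\nu=\mu$ and $\nu'=\mu_X$, bound $\|f_{t_q}\|_\infty\le \frac{1}{2\pi}\|\widehat{\mu}\|_1\ll_\varepsilon 1$ via the Bessel-decay lemma, and take the square root of the bound from Theorem \ref{quantitative-limdist}. Your check that $\mathrm{ELI}_A(t_q)$ gives the linear independence needed for the product formula fills in a step the paper leaves implicit; only characters in $\supp(\widehat{t_q})$ matter, and a possible zero at $s=\tfrac12$, which $\mathrm{ELI}_A$ does not rule out, would only shift $\mu$ and leave $|\widehat{\mu}|$ unchanged.
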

\begin{proof}
It suffices to combine Lemma \ref{density-wass} with Theorem~\ref{quantitative-limdist} together with the fact that \[\|f_{t_q}\|_\infty \ll \| \widehat{\mu} \|_1\ll_\varepsilon 1 .\]
\end{proof}

If we apply this result to $t_q=\varphi(q)\bigl(\mathds{1}_{\{a\}}-\mathds{1}_{\{b\}}\bigr)$, we obtain the quantitative rate announced in the introduction for the race between two residue classes $a \mods q$ and $b \mods q$:

\begin{proof}[Proof of Theorem $\ref{cor: effective rate a vs b}$]
Define 
\[ M_q:=\{ \chi \ne \chi_0\, :\, |\chi(a)-\chi(b)| \ge 1\}\, .\]
Parseval's identity applied to $t_q=\varphi(q)\bigl(\mathds{1}_{\{a\}}-\mathds{1}_{\{b\}}\bigr)$ shows that \[ 2\varphi(q)=\sum_\chi |\chi(a)-\chi(b)|^2 \le \sum_{\chi \notin M_q}1+\sum_{\chi \in M_q}4=\varphi(q)-\bigl|M_q\bigr|+4\bigl|M_q\bigr| \, .\]
Therefore  $\bigl|M_q\bigr| \ge \varphi(q)/3$. In particular, $\|\widehat{t_q}\|_\infty\ge 1$. Using \eqref{eq: def k(t_q)} and \eqref{eq:Ctq} we immediately get $|\supp(\hat{t_q})| \log k(t_q)\le \varphi(q) \log q$ and $C(t_q)\ll \varphi(q)^2 \log q$, and Theorem~\ref{density-convergence} implies the result.
\end{proof}

Theorem \ref{density-convergence} also applies to the context of the race between quadratic residues and nonresidues, where it yields the following quantitative rate of convergence:

\begin{corollary}\label{cor-convdensrvsnr}
Assume $\mathrm{GRH}$ and $\mathrm{ELI_A}(\ell_q)$ for some $A>1$. Let $q\ge 3$ and let $$X\ge (\rho (q) \log \rad(q) )^{(\mathcal{L} \rho (q) \log \rad(q)) ^A}, $$ 
where $\mathcal{L}>0$ is a sufficiently large constant, we have
    \[\left| \delta_{\ell_q} -\frac{1}{X - \log 2}\int_{\log 2}^X \mathds{1}_{(0,\infty)}\left(\pi(e^y;\ell_q)\right)\mathrm{d}y \right| \ll_{A} \rho(q)^{\tfrac{3}{4}} (\log \rad(q))^{\tfrac{3}{2}}\log (\rho(q)\log \rad(q))^{\tfrac{1}{4A}}\frac{(\log \log X)}{ (\log X) ^{\tfrac{1}{4A}}}  .\]
\end{corollary}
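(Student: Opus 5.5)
This will follow from Theorem~\ref{density-convergence} applied to $t_q=\ell_q$, in the same way as the proof of Theorem~\ref{cor: effective rate a vs b}. Four quantities have to be controlled: $\|\widehat{\ell_q}\|_\infty$ (to obtain some fixed $\varepsilon$), $|\supp(\widehat{\ell_q})|\log k(\ell_q)$ (for the range of $X$), $C(\ell_q)$ (for the error term), and the passage from $E(y)$ to $\pi(e^y;\ell_q)$. The last is trivial, since $E(y)$ and $\pi(e^y;\ell_q)$ have the same sign for $y>0$.

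\textbf{Fourier transform of $\ell_q$.} Write $\ell_q=\rho(q)\ind_{R_q}-1$. The indicator of the squares satisfies $\ind_{R_q}=\frac{1}{\rho(q)}\sum_{\chi^2=\chi_0}\chi$, because $R_q$ is the kernel of the dual action, of index $\rho(q)=|\{\chi:\chi^2=\chi_0\}|$. Hence
\[
\ell_q=\sum_{\chi^2=\chi_0,\ \chi\neq\chi_0}\chi .
\]
So $\widehat{\ell_q}$ is the indicator of the nontrivial real characters. Consequently $\|\widehat{\ell_q}\|_\infty=1$, so $\varepsilon=1$ works, and $|\supp(\widehat{\ell_q})|=\rho(q)-1$, $\lambda(\ell_q)=\rho(q)-1$.

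\textbf{Conductor and range of $X$.} Every quadratic character has conductor dividing $4\rad(q)$ (at most $8\rad(q)$ if one is careless at $2$). Therefore $\log k(\ell_q)\ll\log\rad(q)$, and
\[
|\supp|\log k\ll\rho(q)\log\rad(q).
\]
Enlarging $\mathcal L$ absorbs the implied constant into the hypothesis on $X$.

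\textbf{Bounding $C(\ell_q)$.} This is the main obstacle. Taken literally, $C(t_q)$ involves $\lambda(t_q)(\log q)^2$ with $\log q$ rather than $\log\rad(q)$. The factor $(\log q)^2$ arises from Corollary~\ref{sums-zeros} and Lemma~\ref{lemma:TNP}, through the zero counts $N(T,\chi)$, which really depend on $\log q_\chi\ll\log\rad(q)$. Applied to the quadratic characters, and with primitive characters substituted for the imprimitive ones, those bounds therefore hold with $\log\rad(q)$ in place of $\log q$. The discrepancy from imprimitivity is only a sum over primes $p\mid q$ of $\log p$ terms weighted by $p^{-1/2}$ per character, which is $O(\log\rad(q))$ per character.

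For $\ell_q^*(a)=\ell_q(a^2)=\rho(q)-1$ we have the constant function, so $\lambda(\ell_q^*)=\rho(q)-1$ (all mass at $\chi_0$). This gives
\[
C(\ell_q)\ll\rho(q)(\log\rad(q))^2 .
\]

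\textbf{Conclusion.} Substituting into Theorem~\ref{density-convergence},
\[
C^{3/4}\ll\rho(q)^{3/4}(\log\rad q)^{3/2}, \qquad \log C\ll\log(\rho(q)\log\rad q),
\]
which yields exactly the claimed bound.
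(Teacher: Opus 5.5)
Your argument is correct, and you identify the one real issue. Read literally, Theorem~\ref{density-convergence} only gives $C(\ell_q)=(\rho(q)-1)(\log q)^2$, which is too weak when $q$ has large prime-power factors. The paper resolves this differently from you.

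\textbf{Your route.} You keep the modulus $q$ and reopen the lemmas (Lemma~\ref{lemma:TNP}, Lemma~\ref{explicit-formula}, Lemma~\ref{relating-pi-psi}, Corollary~\ref{sums-zeros}), observing that $\log q$ enters only through conductors and an imprimitivity correction. This is sound. $L(s,\chi)$ and $L(s,\chi^*)$ have the same zeros in the critical strip, and $\psi(x,\chi)-\psi(x,\chi^*)\ll\omega(q)\log x\ll\log\rad(q)\log x$. That correction is absorbed by the $\log x$ term already present in the explicit formula. This is the actual shape of the discrepancy, not a sum of $\log p$ terms weighted by $p^{-1/2}$ as you wrote, but your conclusion is unaffected.

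\textbf{The paper's route.} It reopens nothing. The modulus $d_q\in\{\rad(q),2\rad(q),4\rad(q)\}$ has the same prime divisors as $q$, and every real character mod $q$ is induced from one mod $d_q$. Hence the two coincide as functions on $\Zz$. So $\pi(x;\ell_q)=\pi(x;\ell_{d_q})$, $\rho(q)=\rho(d_q)$, $\rad(q)=\rad(d_q)$, and the ELI hypotheses agree. One may therefore assume $q=2^eq'$ with $q'$ odd squarefree and $e\le 3$. Then $\log q\ll\log\rad(q)$, and Theorem~\ref{density-convergence} applies verbatim.

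\textbf{Comparison.} The paper's reduction is a short black-box argument. Yours requires re-verifying each occurrence of $\log q$ and carrying a modified $C(t_q)$ through Theorems~\ref{quantitative-limdist} and~\ref{density-convergence}. In return, your route gives a conductor-sensitive version of the theorem for arbitrary $t_q$, not only for those that factor through a smaller modulus with the same radical.
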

\begin{proof}
Using orthogonality relations of quadratic characters, we have 
\[ \ell_q=\sum_{\substack{\chi^2=\chi_0\\\chi\ne \chi_0}}\chi\, .\]
This proves in particular that $\|\widehat{\ell_q}\|_\infty\geq 1$.
Since every real character $\chi$ modulo $q$ is induced by a character $\chi_d$ modulo $d_q$, where \[ d_q=\begin{cases}
\rad(q)\quad &\text{if}\quad 4\text{ does not divide }q,\\
2\,\rad(q)\quad &\text{if}\quad 4||q,\\
4\, \rad(q)\quad &\text{if}\quad 8 \text{ divides } q
\end{cases}
\] and in this case $\chi=\chi_d$, we have \[ \left| \delta_{\ell_q} -\frac{1}{X-\log 2}\int_{\log 2}^X \mathds{1}_{(0,\infty)}\left(\pi(e^y;\ell_q)\right)\mathrm{d}y \right|=\left| \delta_{\ell_{d_q}} -\frac{1}{X-\log 2}\int_{\log 2}^X \mathds{1}_{(0,\infty)}\left(\pi(e^y;\ell_{d_q})\right)\mathrm{d}y \right|.\]
Thus, we may assume that $q$ is of the form $2^e q'$ where $q'$ is an odd squarefree integer and $e\in\{0,1,2,3\}$. Again, using \eqref{eq: def k(t_q)} and \eqref{eq:Ctq}, we obtain $|\supp(\hat{t_q})| \log k(t_q)\leq \rho(q) \log q$, and $C(\ell_q)\leq \rho(q) (\log q)^2$, and the result follows from Theorem~\ref{density-convergence}.
\end{proof}

\subsection{Application to generalized Skewes' numbers} \label{sec:Skewes}

In the case of prime number races, as explained by Fiorilli~\cite{Fiorilli}, one might consider $t_q$ with small $\delta_{t_q}$, and define its associated Skewes' number as \[ x(t_q):=\inf\left\{x\ge 2\, :\, \pi(x;t_q) >0 \right\} .\]
A natural question is: what is the rate of growth of $x(t_q)$ in terms of $q$? 

A first instance where we can answer this question is the classical prime number race, corresponding to $t_q=\varphi(q)\bigl(\mathds{1}_{\{a\}}-\mathds{1}_{\{b\}}\bigr)$. In this case, we denoted the Skewes' number by $x_{q;a,b}$ and stated an upper bound for this number in Theorem \ref{skewes-primerace} of the introduction, which we prove now.

\begin{proof}[Proof of Theorem $\ref{skewes-primerace}$]
By Theorem \ref{cor: effective rate a vs b} and the fact that $A >1$, we have     
\[\left| \delta_{t_q} -\frac{1}{X-\log 2}\int_{\log 2}^X \mathds{1}_{(0,\infty)}\left(E(y)\right)\mathrm{d}y \right| \ll_{A} \varphi(q)^{\tfrac{3}{2}} \log q\,(\log \log X) (\log X) ^{-\tfrac{1}{4A}}  .\] 
But by \cite[Theorem 1.11]{FiM}, we also know that $\delta_{t_q} \geq C = 10^{-5}$, an absolute constant independent of $a, b, q$ and $A$ (the \enquote{worst case} being attained for $a=1, b=5$ and $q=24$). This yields $$\frac{1}{X-\log 2}\int_{\log 2}^X \mathds{1}_{(0,\infty)}\left(E(y)\right)\mathrm{d}y \geq C - K_A \varphi(q)^{\tfrac{3}{2}} \log q (\log \log X) (\log X) ^{-\tfrac{1}{4A}},$$ where $K_A > 0$ is the implicit constant in Theorem \ref{cor: effective rate a vs b}. Therefore, it suffices to choose an $X_0$ such that $\frac{(\log X_0)^{1/4A}}{\log \log X_0} \gg_A \varphi(q)^{3/2} \log q$ to obtain $\frac{1}{X-\log 2}\int_{\log 2}^{X_0} \mathds{1}_{(0,\infty)}\left(E(y)\right)\mathrm{d}y > 0$. This is the case as soon as $\log \log X_0 = C_A \log \varphi(q)$ for some other constant $C_A > 0$ depending on $A$. In particular, there must exist a $y \leq X_0$ such that $\pi(e^y; t_q) > 0$, \emph{i.e.} $\log \log \log x_{q;a,b} \leq \log \log y \leq \log \log X_0 \ll_A \log \varphi(q)$.
\end{proof}

The case of $x_{q;R,NR}$ (corresponding to $t_q = \ell_q$) requires more work as $\delta_{\ell_q}$ might go to zero. Fiorilli showed, using large deviations results of Montgomery-Odlyzko~\cite{Mo-Od}, that (assuming GRH and LI for Dirichlet $L$-functions modulo $q$) this happens if and only if $\frac{\rho(q)}{\log \rad(q)} \to \infty$ as $q\to \infty$. 
Let us start by going back to the general setting ($t_q$ arbitrary). Following standard results (see for instance Fiorilli-Jouve~\cite{FJ}*{Proposition 3.18}) one proves that, under $\mathrm{GRH}$ and LI, $\mu$ has mean $\E(t_q)=-\langle t_q,r_q\rangle$ and its variance is given by \[ \Var(t_q)=2\sum_{\chi\ne \chi_0} |\langle t_q,\chi \rangle |^2 \sum_{\gamma_\chi>0}\frac{1}{\tfrac{1}{4}+\gamma_\chi^2}.\]
An important quantity in what follows is \[B(t_q):=\frac{\E(t_q)}{\sqrt{\Var(t_q)}} .\]

\begin{theorem}\label{skewes-result}
      Assume $\mathrm{GRH}$ and $\mathrm{ELI_A}(t_q)$ for some $A>1$. Assume that for all sufficiently large $q$ and all Dirichlet characters $\chi$ modulo $q$ we have \[ \widehat{t_q}(\chi)\in \{-1,0,1\}.\] If $B(t_q) \to -\infty$, then 
      \[ \log \log \log x(t_q) \ll_{A} B(t_q)^2+\log C(t_q)  .\]
\end{theorem}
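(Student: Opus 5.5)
Mirroring the proof of Theorem \ref{skewes-primerace}, we will find $X_0$ such that
\[\frac{1}{X_0-\log 2}\int_{\log 2}^{X_0}\mathds{1}_{(0,\infty)}(E(y))\,\mathrm{d}y>0.\]
The plan is to combine Theorem \ref{density-convergence} with a lower bound for $\delta_{t_q}$. Since $B(t_q)\to-\infty$, the density $\delta_{t_q}$ tends to zero, so we need an explicit lower bound of the shape $\delta_{t_q}\geq \exp(-c\,B(t_q)^2)$ for some absolute $c>0$. The hypothesis $\widehat{t_q}(\chi)\in\{-1,0,1\}$ gives $\|\widehat{t_q}\|_\infty\geq 1$, so Theorem \ref{density-convergence} applies with $\varepsilon=1$. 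This yields, for $X$ beyond the threshold $(|\supp(\widehat{t_q})|\log k(t_q))^{(\mathcal{L}|\supp(\widehat{t_q})|\log k(t_q))^A}$,
\[\frac{1}{X-\log 2}\int_{\log 2}^X\mathds{1}_{(0,\infty)}(E(y))\,\mathrm{d}y\geq \delta_{t_q}-K_A\,C(t_q)^{3/4}(\log C(t_q))^{1/(4A)}(\log\log X)(\log X)^{-1/(4A)}.\]

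For the lower bound on $\delta_{t_q}$ we use a large deviation argument of Montgomery--Odlyzko type, as in Fiorilli's work; this is the main obstacle. Under LI, the limiting $\mu$ is the law of $\E(t_q)+\sum_{\chi}\sum_{\gamma_\chi>0} r_\gamma X_\gamma$, where the $X_\gamma=\Re(2\theta_\gamma)$ are independent with $\theta_\gamma$ uniform on the circle, and $r_\gamma=2|\langle t_q,\chi\rangle|/|\tfrac12+i\gamma_\chi|\leq 4$, since $|\widehat{t_q}(\chi)|\leq 1$. Bounded summands are exactly what the Montgomery--Odlyzko lower bound $\Pr(\sum r_\gamma X_\gamma\geq V)\geq \exp(-cV^2/\Var)$ requires. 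We apply it with $V=|\E(t_q)|$, which gives $\delta_{t_q}=\mu(0,\infty)\geq \exp(-c B(t_q)^2)$. One subtlety: the Montgomery--Odlyzko bound needs the sum of the $r_\gamma$ to be large compared with $V$ (otherwise the exponent becomes $V\log V$ rather than $V^2$). Here $\sum r_\gamma$ diverges, since the zeros are infinitely many with $r_\gamma\asymp 1/\gamma$, so this condition holds. The ELI hypothesis implies LI, and hence this Bessel-product description is available. Carrying out this step requires checking the explicit form of the Montgomery--Odlyzko inequality and the uniformity in $q$.

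Then we choose $X_0$ so that the error term is below $\tfrac12\exp(-cB(t_q)^2)$. It suffices that
\[(\log X_0)^{1/(4A)}/\log\log X_0\gg_A C(t_q)^{3/4}(\log C(t_q))\,e^{cB(t_q)^2},\]
which holds once
\[\log\log X_0=C_A\bigl(B(t_q)^2+\log C(t_q)\bigr).\]
We must also check that $X_0$ exceeds the threshold of Theorem \ref{density-convergence}. Since $|\supp(\widehat{t_q})|\log k(t_q)\leq \lambda(t_q)\log q\leq C(t_q)$ (each nonzero coefficient has absolute value $1$), the threshold has $\log\log\ll_A\log C(t_q)$, so this holds after enlarging $C_A$. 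Finally, positivity of the integral produces $y\leq X_0$ with $\pi(e^y;t_q)>0$, so
\[\log\log\log x(t_q)\leq\log\log X_0\ll_A B(t_q)^2+\log C(t_q).\]
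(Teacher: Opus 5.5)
Your architecture matches the paper's proof: Theorem~\ref{density-convergence} with $\varepsilon=1$, a lower bound $\delta_{t_q}\gg\exp(-c_1B(t_q)^2)$, the choice $\log\log X_0\asymp_A B(t_q)^2+\log C(t_q)$, and the threshold check via $|\supp(\widehat{t_q})|\log k(t_q)\le\lambda(t_q)\log q\le C(t_q)$. All of that is fine. The paper takes the lower bound on $\delta_{t_q}$ from Lemma~\ref{density-bound}, which is Fiorilli--Jouve's Proposition~5.3. You instead derive it from Montgomery--Odlyzko, and the reason you give for being in the Gaussian regime is wrong. That is a genuine gap in the one step carrying the weight.

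The Montgomery--Odlyzko lower bound reads
\[
\mathbb{P}\Bigl(\sum r_nX_n\ge V\Bigr)\ge a_1\exp\Bigl(-a_2V^2\Big/\sum_{n>N}r_n^2\Bigr)
\]
whenever the head satisfies $\sum_{n\le N}r_n\ge 2V/c$. The exponent therefore involves the \emph{tail} variance beyond the head needed to reach $V$, not $\Var(t_q)$. Divergence of $\sum r_\gamma$ only tells you such an $N$ exists. If the head must contain all zeros up to a large height $T$, the tail variance is only $\asymp|\supp(\widehat{t_q})|\log(k(t_q)T)/T$.

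As written, your argument never uses the size of $V$. It would therefore give $\mathbb{P}(\sum r_\gamma X_\gamma\ge V)\ge\exp(-cV^2/\Var(t_q))$ for every $V>0$. That is false: for fixed $q$, the upper-bound half of Montgomery--Odlyzko shows this probability is doubly exponentially small in $\sqrt{V}$.

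The missing input is where the hypothesis $\widehat{t_q}(\chi)\in\{-1,0,1\}$ really enters. Since $r_q=\sum_{\chi^2=\chi_0}\chi$, one has
\[
|\E(t_q)|=\Bigl|\sum_{\chi\ \mathrm{real}}\widehat{t_q}(\chi)\Bigr|\le|\supp(\widehat{t_q})|.
\]
Now take the zeros of height at most a large absolute constant $T_0$. These carry the largest $r_\gamma$, since $r_\gamma=2/|\tfrac12+i\gamma|$ on the support. By Lemma~\ref{lem: N(T,t_q)-expression} they give $\sum_{\gamma\le T_0}r_\gamma\gg|\supp(\widehat{t_q})|\log(k(t_q)T_0)$, which exceeds any fixed multiple of $|\E(t_q)|$. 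By Riemann--von Mangoldt, the remaining tail still satisfies $\sum_{\gamma>T_0}r_\gamma^2\gg\Var(t_q)$.

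Only with this check does Montgomery--Odlyzko yield $\delta_{t_q}\gg\exp(-cB(t_q)^2)$ uniformly in $q$. With it, or by simply citing Lemma~\ref{density-bound}, your proof is complete.
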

In order to prove Theorem~\ref{skewes-result}, we will need the following Lemma: 
\begin{lemma}\label{density-bound}
Assume $\mathrm{GRH}$ and LI and assume that for all $q\ge 3$ and all Dirichlet character $\chi$ modulo $q$ we have \[ \widehat{t_q}(\chi)\in \{-1,0,1\}.\] If $E(t_q)\le 0$ then there exists an absolute constant $c_1$ such that \[ \delta_{t_q} \gg \exp \left(-c_1 B(t_q)^2 \right) \, .\] 
\end{lemma}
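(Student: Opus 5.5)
We must show that $\delta_{t_q} = \mu(0,\infty) \gg \exp(-c_1 B(t_q)^2)$ when $\E(t_q)\le 0$ and $\widehat{t_q}$ takes values in $\{-1,0,1\}$. Under $\mathrm{GRH}$ and LI, $\mu$ is the law of
\[ Z = \E(t_q) + \sum_{\chi\ne\chi_0}\sum_{\gamma_\chi>0} 2\,\frac{|\langle t_q,\chi\rangle|}{\sqrt{\tfrac14+\gamma_\chi^2}}\cos(\Theta_{\gamma_\chi}), \]
where the $\Theta$'s are independent and uniform on $[0,2\pi]$, up to the pairing of conjugate characters. Write $V=\Var(t_q)$ and $\E=\E(t_q)\le 0$. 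We need a lower bound for $\mathbb{P}(Z-\E > |\E|)$, i.e. for the probability that a sum of independent bounded symmetric variables exceeds $|B|$ standard deviations, where $B=B(t_q)$. If $|B|$ is bounded, the bound follows from a central-limit or Paley--Zygmund type argument; the real content is in the regime $|B|$ large.

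For large $|B|$ I would use the standard exponential tilting (Cramér) argument, as in Montgomery--Odlyzko \cite{Mo-Od} and Fiorilli \cite{Fiorilli}. Set $r_\gamma = 2|\langle t_q,\chi\rangle|/\sqrt{\tfrac14+\gamma^2}$. Since $\widehat{t_q}(\chi)\in\{-1,0,1\}$, we have $r_\gamma\le 4$ uniformly, and $\sum r_\gamma^2 \asymp V$.

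\begin{enumerate}
\item \emph{Moment generating function.} Compute $\log\E e^{s(Z-\E)} = \sum_\gamma \log I_0(s r_\gamma)$. The bounds $\log I_0(x)\le x^2/4$, together with $\log I_0(x)\ge x^2/4 - x^4/64$ for $|x|\le 1$, give $\log\E e^{s(Z-\E)} = \tfrac{s^2V}{2}(1+O(s^2))$ for $s\le 1/4$.
\item \emph{Tilting.} Choose $s$ so that the tilted mean equals about $2|\E|$, namely $s\approx 2|B|/\sqrt V$.
\item \emph{Concentration under the tilted measure.} Show that the tilted variable lies in $[|\E|, 3|\E|]$ with probability at least $1/2$, by Chebyshev's inequality using the tilted variance, which is $\asymp V$.
\item \emph{Undoing the tilt.} Conclude that $\mathbb{P}(Z>0)\ge \tfrac12 \exp\bigl(\Lambda(s) - 3s|\E|\bigr)\ge \exp(-c B^2)$, where $\Lambda(s)=\log\E e^{s(Z-\E)}$.
\end{enumerate}

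The main obstacle is that the quadratic approximation of $\Lambda$ requires $s r_\gamma$ to be small. Taking $s\approx |B|/\sqrt V$ makes $s r_\gamma\ll |B|/\sqrt V$, which is small only if $|B|\ll\sqrt V$. Since $V\gg$ (number of characters in the support), this does hold when $|\E|$ is bounded by about $|\supp\widehat{t_q}|$, and I would check this in the cases of interest.

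Otherwise I would avoid the quadratic approximation and instead use the Bessel-function bounds directly: $\log I_0(x)\ge c\min(x^2,x)$ for $x>0$, and $(\log I_0)''\asymp \min(1,x^{-1})$. These still control the tilted mean and variance for arbitrary $s$, so we can solve $\Lambda'(s)=2|\E|$ exactly. The final exponent is then $s\Lambda'(s)-\Lambda(s)\ll B^2$ by convexity. Here the uniform bound $r_\gamma\le 4$ is what makes the constant $c_1$ absolute.
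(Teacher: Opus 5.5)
The paper does not reprove this lemma. It applies Proposition 5.3 of Fiorilli--Jouve (itself derived from the Montgomery--Odlyzko large deviation estimates) to $-t_q$, and then uses $\delta_{t_q}=1-\delta_{-t_q}$. Your exponential-tilting argument is in effect a reproof of that lower bound, and it can be made to work. As written, however, it has a gap exactly at the point you flag.

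\textbf{The gap.} Everything depends on the tilt parameter $s$, defined by $\Lambda'(s)=2|\E(t_q)|$, being bounded by an absolute constant. This amounts to $|\E(t_q)|\ll \Var(t_q)$. You leave this to be checked ``in the cases of interest'' and offer a fallback for the other case, but that fallback is wrong.
\begin{itemize}
\item As $x\to\infty$, $(\log I_0)''(x)\sim 1/(2x^2)$, not $\asymp x^{-1}$.
\item More seriously, no convexity argument can give $s\Lambda'(s)-\Lambda(s)\ll B(t_q)^2$ when $|\E(t_q)|$ is large compared with $V$. In that regime the tails of $Z$ are doubly exponential, which is the Montgomery--Odlyzko phenomenon. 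For large $u$ one has $\Lambda''(u)\asymp u^{-1}\sum_{\chi\in\supp\widehat{t_q}}\log(q_\chi u)$. This forces $s$ to be exponentially large and the rate function to be far larger than $B^2$. So the Gaussian-type lower bound you want would actually be false there.
\item The uniform bound $r_\gamma\le 4$ does not by itself make $c_1$ absolute.
\end{itemize}

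\textbf{The missing ingredient.} A one-line consequence of the hypothesis $\widehat{t_q}(\chi)\in\{-1,0,1\}$ shows the bad regime never occurs. Since $r_q=\sum_{\chi^2=\chi_0}\chi$ and $\langle t_q,\chi_0\rangle=0$, you have $\E(t_q)=-\langle t_q,r_q\rangle=-\sum_{\chi^2=\chi_0,\,\chi\neq\chi_0}\widehat{t_q}(\chi)$. Hence $|\E(t_q)|\le|\supp\widehat{t_q}|$. On the other hand $\Var(t_q)\gg|\supp\widehat{t_q}|$, because by the Riemann--von Mangoldt formula every $L(s,\chi)$ has $\gg 1$ zeros of height at most an absolute $T_0$.

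\textbf{Repairing the tilting step.}
\begin{itemize}
\item The bound above gives $s\le S_0$ for an absolute $S_0$. Indeed $\Lambda'$ is increasing, and $\Lambda'(S_0)\ge (I_1/I_0)(1)\sum_{\chi}N(T,\chi)/T$ exceeds $2|\supp\widehat{t_q}|$ for a suitable absolute $T\le S_0$.
\item It does not give $s\le 1/4$, since $\Var(t_q)/|\supp\widehat{t_q}|$ need not be large when the conductors are small. So replace the Taylor expansion of $\log I_0$ by the bound $(I_1/I_0)(x)\gg x$ on $[0,4S_0]$.
\item That bound gives $\Lambda'(s)\gg sV$, hence $s\ll|\E(t_q)|/V$ and $3s|\E(t_q)|-\Lambda(s)\le 3s|\E(t_q)|\ll B(t_q)^2$.
\item For the tilted variance, the trivial bound $\Lambda''(s)\le\sum_\gamma r_\gamma^2=2V$ suffices. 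Chebyshev then applies once $|B(t_q)|$ exceeds an absolute constant; with a wider window, $|B(t_q)|\ge 1/2$ is enough.
\end{itemize}

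\textbf{Small $|B(t_q)|$.} For $|B(t_q)|\le 1/2$, do not appeal to a central limit theorem. It is not uniform here, because a few large $r_\gamma$ can dominate. Paley--Zygmund with $\E W^4\le 3V^2$ gives $\mathbb{P}(W>|\E(t_q)|)\gg 1$ directly.

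With these changes your route gives a self-contained proof of the bound the paper imports from Fiorilli--Jouve.
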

\begin{proof}
    This is a consequence of~\cite{FJ}*{Proposition 5.3}, which is itself a consequence of the large deviations results of Montgomery-Odlyzko~\cite{Mo-Od}: we have $\E(-t_q)\ge 0$, by~\cite{FJ}*{Proposition 5.3} there exists an absolute constant $c_1>0$ such that \[ 1-\delta_{-t_q} \gg \exp(-c_1 B(t_q)^2) .\]
    The result follows from the equality $\delta_{t_q}=\mu(0,\infty)=1-\delta_{-t_q}$.
\end{proof}
\begin{proof}[Proof of Theorem~$\ref{skewes-result}$]
We determine $X_0(q)$ for which \[ \frac{1}{X_0(q)}\int_{\log 2}^{X_0(q)}\mathds{1}_{(0,\infty)}(E(y)) \mathrm{d}y >0.\]
This means that there exists $x_0(q) =e^y \le \exp(X_0(q))$ such that $\pi(x_0(q);t_q)>0$. Thus, 
\[ \log \log \log x(t_q) \le \log \log \log x_0(q) \leq \log \log X_0(q) \, .\]
By Theorem~\ref{density-convergence} we have for all $$X\ge (|\supp(\hat{t_q})| \log k(t_q) )^{(\mathcal{L} |\supp(\hat{t_q})| \log k(t_q)) ^A}, $$ 
where $\mathcal{L}$ is a sufficiently large constant, we have
\[ \left| \delta_{t_q}-\frac{1}{X-\log 2}\int_{\log 2}^X \mathds{1}_{(0,\infty)}(E(y)) \mathrm{d}y \right| \ll_{A} \frac{C(t_q)}{(\log X)^{\tfrac{1}{5A}}} .\]
Thus, it suffices to choose $X_0(q)$ so that \[ \log X_0(q) = \left(K_{A} \frac{C(t_q)}{\delta_{t_q}}\right)^{5A},\]
where $K_{A}$ is a large constant depending only on $A$. We note that our condition on $t_q$ implies that $\lambda(t_q)=|\supp(\hat{t_q})|$, thus $C(t_q)>|\supp(t_q)|\log k(t_q)$. Hence $X_0(q)$ satisfies the condition $X_0(q)\ge (|\supp(\hat{t_q})| \log k(t_q) )^{(\mathcal{L} |\supp(\hat{t_q})| \log k(t_q)) ^A}$. 
By Lemma~\ref{density-bound}, we deduce that \[ \log \log X_0(q) \ll_{A} B(t_q)^2+\log C(t_q). \]
The result follows.
\end{proof}

Finally, we can prove Theorem \ref{skewes-qrnr} on the Skewes' number for the race between quadratic residues and nonresidues.

\begin{proof}[Proof of Theorem $\ref{skewes-qrnr}$]
    As in the proof of Corollary~\ref{cor-convdensrvsnr}, we may assume that, for all $n\ge1$, $q_n$ is of the form $2^e q_n'$ where $q_n'$ is an odd squarefree integer and $e\in\{0,1,2,3\}$. Since \[ \ell_{q_n}=\sum_{\substack{\chi^2 =\chi_0\\ \chi \ne \chi_0}}\chi ,\]
    then for all $\chi$ modulo $q_n$ we have $\langle \ell_{q_n} ,\chi\rangle\in\{0,1\}$. Moreover, Fiorilli~\cite{Fiorilli} proved that \[ B(\ell_{q_n}) \asymp -\sqrt{\frac{\rho(q_n)}{\log \rad(q_n)}} \, . \]
    Since $C(\ell_{q_n})\ll \rho(q_n) (\log \rad(q_n))^2$ and $\log \rad(q_n)\ll \rho(q_n)$, we have $\log C(\ell_{q_n})\ll \log \rho(q_n)$, thus, it suffices to apply Theorem~\ref{skewes-result} to deduce the theorem.
\end{proof}

\section*{Appendix: Proof of Corollary~\ref{sums-zeros}} \label{appendix_proof_sum_zeros}
Let $\chi, \lambda$ be Dirichlet characters modulo $q\ge3$.
We want to give an upper bound to the following sum: 
\[ \sum_{\substack{|\gamma_\chi|\ge T,\\|\gamma_\lambda| \ge T}} \frac{1}{|\gamma_\chi \gamma_\lambda|}\min\left( Y, \frac{1}{|\gamma_\chi-\gamma_\lambda|}\right)\]
in terms of $T,Y\ge 2$ and $q$. Using symmetry of zeros it suffices to consider the sum when $\gamma_\chi$ runs over positive imaginary parts and allowing $\gamma_\lambda$ to take positive and negative values. Noticing that 
\begin{equation}
    \frac{1}{\frac{1}{Y}+|\gamma_\chi-\gamma_\lambda|}\le \min\left( Y, \frac{1}{|\gamma_\chi-\gamma_\lambda|}\right) \le \frac{2}{\frac{1}{Y}+|\gamma_\chi-\gamma_\lambda|} \, ,
\end{equation}
the problem is thus equivalent to estimating the sum 
\[ \sum_{\substack{\gamma_\chi\ge T,\\|\gamma_\lambda| \ge T}} \frac{1}{|\gamma_\chi \gamma_\lambda|\left(\frac{1}{Y}+|\gamma_\chi-\gamma_\lambda|\right)} \, .\]
Define 
\[ S(\chi,\lambda;T,Y) = \sum_{\substack{\gamma_\chi \ge T \\ |\gamma_\lambda| \ge T}}\frac{1}{|\gamma_\chi \gamma_\lambda|\left(1+Y|\gamma_\chi-\gamma_\lambda|\right)}\, .\]
\begin{proof}[Proof of Corollary~$\ref{sums-zeros}$]
We decompose 
\[S(\chi,\lambda;T,Y) \le \sum_{\gamma_\chi \ge T} \sum_{|\gamma_\lambda| \ge 1}\frac{1}{|\gamma_\chi \gamma_\lambda|\left(1+Y|\gamma_\chi-\gamma_\lambda|\right)}=\sum_{i=1}^5 S_i(T,Y)\]
where $S_1(T,Y),\dots,S_5(T,Y)$ are the sums over $\gamma_\chi\ge T$ corresponding respectively to the ranges of $\gamma_\lambda$ given by: $\gamma_\lambda \le -\gamma_\chi/2$, $|\gamma_\lambda|<\gamma_\chi/2$, $|\gamma_\lambda-\gamma_\chi| \le 1$, $1<|\gamma_\chi-\gamma_\lambda|\le\gamma_\chi/2$, $\gamma_\lambda> 3\gamma_\chi/2$.\\
\textbf{Estimating $S_1(T,Y)$:}
If $\gamma_\lambda \le -\frac{\gamma_\chi}{2}$, we have $1+Y|\gamma_\chi-\gamma_\lambda|=1+Y\gamma_\chi+Y|\gamma_\lambda|>Y|\gamma_\lambda|$.
Thus \[ S_1(T,Y) = \sum_{\gamma_\chi \ge T} \sum_{\gamma_\lambda \le -\gamma_\chi/2}\frac{1}{|\gamma_\chi \gamma_\lambda|\left(1+Y|\gamma_\chi-\gamma_\lambda|\right)} \le \sum_{\gamma_\chi \ge T} \sum_{\gamma_\lambda \le -\gamma_\chi/2}\frac{1}{Y\gamma_\chi \gamma_\lambda^2}\, . \]
Since \[  \sum_{\gamma_\lambda \le -\gamma_\chi/2} \frac{1}{\gamma_\lambda^2} \ll \frac{\log q \gamma_\chi}{\gamma_\chi}\, ,\]
we have \[ S_1(T,Y) \ll \sum_{\gamma_\chi \ge T} \frac{ \log q\gamma_\chi }{Y\gamma_\chi^2}\ll \frac{(\log qT)^2}{YT}\, .\]
\textbf{Estimating $S_2(T,Y)$:} If $|\gamma_\lambda|\le \gamma_\chi/2$ then $1+Y|\gamma_\chi-\gamma_\lambda| = 1+Y(\frac{\gamma_\chi}{2}-\gamma_\lambda)+\frac{Y\gamma_\chi}{2} >\frac{Y\gamma_\chi}{2}$. \\
Moreover, \[ \sum_{1\le |\gamma_\lambda |<\gamma_\chi/2} \frac{1}{|\gamma_\lambda|} \ll \sum_{1\le n\le \gamma_\chi} \frac{\log qn}{n}\ll (\log q \gamma_\chi) \log \gamma_\chi \, .\]
Thus, \[S_2(T,Y)=\sum_{\gamma_\chi \ge T} \sum_{1\le |\gamma_\lambda |<\gamma_\chi/2}\frac{1}{|\gamma_\chi \gamma_\lambda|\left(1+Y|\gamma_\chi-\gamma_\lambda|\right)}\ll \sum_{\gamma_\chi \ge T}(\log q)\frac{(\log \gamma_\chi)^2}{Y \gamma_\chi^2}\ll (\log q)^2\frac{(\log T)^3}{YT}\, .\]
\textbf{Estimating $S_3(T,Y)$:} If $|\gamma_\lambda-\gamma_\chi | \le 1$, we use the trivial bound $1+Y|\gamma_\lambda-\gamma_\chi| \ge 1$ and $|\gamma_\lambda|\gg \gamma_\chi$. Thus \[S_3(T,Y)=\sum_{\gamma_\chi \ge T} \sum_{\substack{\gamma_\lambda \\ |\gamma_\lambda-\gamma_\chi| \le 1}}\frac{1}{|\gamma_\chi \gamma_\lambda|\left(1+Y|\gamma_\chi-\gamma_\lambda|\right)} \ll \sum_{\gamma_\chi \ge T} \frac{\log q\gamma_\chi}{\gamma_\chi^2}\ll \frac{(\log qT)^2}{T}\, .\]
\textbf{Estimating $S_4(T,Y)$:} When $1<|\gamma_\lambda-\gamma_\chi|\le \gamma_\chi/2$, we use the bound 
\[ \sum_{\substack{\gamma_\lambda \\ 1\le |\gamma_\lambda -\gamma_\chi|\le \gamma_\chi/2}} \frac{1}{1+Y|\gamma_\chi-\gamma_\lambda|} \ll \frac{\log q \gamma_\chi}{Y}\sum_{1\le n \le \gamma_\chi }\frac{1}{n} \ll \frac{(\log q \gamma_\chi)\log \gamma_\chi}{Y}\, .\]
Thus, \[S_4(T,Y)\ll \sum_{\gamma_\chi \ge T}(\log q)\frac{(\log \gamma_\chi)^2}{\gamma_\chi ^2} \ll (\log q)^2\frac{(\log T)^3}{YT}\]
\textbf{Estimating $S_5(T,Y)$: }If $\gamma_\lambda \ge 3\gamma_\chi/2$ we have \[Y|\gamma_\chi-\gamma_\lambda|+1=Y\left(\frac{2\gamma_\lambda}{3}-\gamma_\chi\right)+1+\frac{Y\gamma_\lambda}{3}\ge \frac{Y\gamma_\lambda}{3}\, .\]
Thus, \[S_5(T,Y) \ll \sum_{\gamma_\chi \ge T} \sum_{\gamma_\lambda \ge 3\gamma_\chi/2} \frac{1}{Y\gamma_\chi \gamma_\lambda^2} \ll \sum_{\gamma_\chi \ge T} \frac{\log q\gamma_\chi}{Y\gamma_\chi ^2} \ll \frac{(\log q T)^2}{YT}\, .\]
This proves that \[ S(\chi,\lambda; T,Y) \ll (\log q)^2\left( \frac{(\log T)^2}{T}+\frac{(\log T)^3}{YT}\right)\, .\]
The corollary is thus deduced by multiplying by $Y$. 
\end{proof}

\bibliography{bibliochebyshev}

\end{document}